 \newtheorem{Th}{Theorem}[section]
 \newtheorem{Lemma}[Th]{Lemma}
\newtheorem{Cor}[Th]{Corollary}
 \newtheorem{Rem}[Th]{Remark}
 \newtheorem*{rem}{Remark}
\newtheorem{Def}[Th]{Definition}
\begin{document}

\bibliographystyle{amsalpha}

\title{$SO(2)$ symmetry of the translating solitons of the mean curvature flow in $\mathbb{R}^4$}

\begin{abstract}
In this paper, we prove that the translating solitons of the mean curvature flow in $\mathbb{R}^4$ which arise as blow-up limit of embedded, mean convex mean curvature flow must have $SO(2)$ symmetry.
\end{abstract}

\author{Jingze Zhu}
\address{Department of Mathematics, Columbia University, New York, NY 10027}
\maketitle

\section{Introduction}\label{sec:intro}

Translating solitons are an important class of solutions to the mean curvature flow. They often arise as blow-up limit of type II singularities, following Hamilton's blow-up procedure \cite[Section 16]{hamilton1993formations}.

Before we go into it, it's worthwhile to discuss 
 a wider class of solutions called ancient solutions, which are defined on $(-\infty, T)$. As we will see later, our study of the translating solitons depend on the understanding of the ancient solutions.  The ancient solutions to the geometric flow  were brought to attention by the famous work in Ricci flow by Perelman \cite{perelman2002entropy}. He  showed
that the high curvature regions of the 3 dimensional Ricci flow are modeled on
$\kappa$ solutions, which are ancient, $\kappa$-noncollapsed, and have bounded positive curvature. $\kappa$ solutions therefore arise as limits of general blow-up process.

Similar results are available in the mean convex mean curvature flow. Namely, before the first singular time the high curvature regions of the embedded, mean convex mean curvature flow are modeled on the convex, non-collapsed ancient solutions.
This was proved by the fundamental work of White \cite{white2000size, white2003nature}  and later streamlined by Haslhofer--Kleiner \cite{haslhofer2017mean}.
There are also various works by Huisken--Sinestrari \cite{huisken1999convexity}, Sheng--Wang \cite{sheng2009singularity} and
Andrews \cite{andrews2012noncollapsing} concerning the non-collapsing property, convexity estimate.

  It is natural to ask what these ancient solutions look like. We want to mention  the recent breakthrough classification results of the convex non-collapsed ancient solutions, assuming uniform 2-convexity additionally.
  In the noncompact case,
  Brendle--Choi \cite{brendle2019uniqueness, brendle2018uniqueness} showed that the strictly convex solution must be Bowl soliton. In the compact case, Angenent--Daskalopoulos--Sesum \cite{angenent2019unique} \cite{angenent2020uniqueness} showed that there is only one ancient oval (up an Eulidean isometry and scaling) that is not a shrinking sphere,
  whose existence was exploited by Haslhofer--Hershkovits  \cite{haslhofer2013ancient}.
  If the solution is not strictly convex, then Hamilton's maximum principle \cite[Section 4]{hamilton1986four} implies that the solution is a product of a line and a uniformly convex ancient solution, therefore must be a shrinking cylinder by \cite{huisken1984flow}.

  However, it's still challenging to understand the convex non-collapsed ancient solutions without the uniform 2-convexity assumption.
  One reason comes from the possibly large number of solutions. There is a family of convex graphical translating solutions known as flying wings (see the paper of Bourni--Langford--Tinaglia \cite{bourni2020existence} and Hoffman--Ilmanen--Martin--White \cite{Hoffman2019}).
  Asymptotically they look like two pieces of slightly dilated grim reaper$\times\mathbb{R}$ tilted by opposite angle (the dilation factor is related to the angle). The flying wings are in the slab region, so they can't be non-collapsed and don't fit into our picture.
  But it is expected that in dimensions higher than 3 we have similar objects which asymptotically look like two symmetric pieces of tilted Bowl$\times\mathbb{R}$ and they should be convex and non-collapsed.
  In fact, Hoffman--Ilmanen--Martin--White \cite{Hoffman2019} also constructed a family
  of entire graph translators.

  To make a step forward, we will show that the translating solitons in $\mathbb{R}^4$ that may arise as blow-up limit (by Hamilton's process) of embedded, mean convex mean curvature flow must have $SO(2)$ symmetry, which means that they are invariant under $SO(2)$ action.
  By the previous work mentioned above, any of such solutions must be convex, non-collapsed. Moreover, Hamilton's blow-up process \cite{hamilton1993formations} ensures that the maximal mean curvature is attained. Here is the main theorem:

  \begin{Th}\label{main theorem}
        Suppose that $M^3\subset\mathbb{R}^4$ is a complete, noncollapsed, convex smooth translating soliton of mean curvature flow with a tip that attains maximal mean curvature. Then $M$ has $SO(2)$ symmetry.
    \end{Th}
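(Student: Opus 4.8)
The plan is to isolate the two cases already settled in the literature and then treat the remaining one by an asymptotic analysis followed by a reflection argument. Since a translating soliton is in particular an eternal solution of mean curvature flow, Hamilton's strong maximum principle for the evolution of the second fundamental form \cite{hamilton1986four} gives a dichotomy: either the smallest principal curvature $\lambda_1$ is positive everywhere, or $\lambda_1\equiv0$ and $M$ splits off a line. In the splitting case $M=N^2\times\mathbb R$, where $N^2\subset\mathbb R^3$ is a complete, convex, noncollapsed translator with a tip attaining the maximal mean curvature; since convex surfaces in $\mathbb R^3$ are automatically uniformly $2$-convex, the work of Brendle--Choi \cite{brendle2019uniqueness} forces $N$ to be the two-dimensional bowl soliton (the $\Delta$-wing and grim-reaper type translators of $\mathbb R^3$ are collapsed, and the plane is excluded by the tip hypothesis), so $M$ is the product $\mathrm{Bowl}^2\times\mathbb R$ of the two-dimensional bowl with a line, which is manifestly $SO(2)$-symmetric. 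Hence from now on $\lambda_1>0$ everywhere.

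If moreover $M$ is uniformly $2$-convex, i.e.\ $\lambda_1+\lambda_2\ge\beta H$ for some $\beta>0$, then $M$ is a strictly convex, noncollapsed, uniformly $2$-convex translator, and by Brendle--Choi \cite{brendle2019uniqueness,brendle2018uniqueness} it is the bowl soliton, which is $SO(3)$-symmetric and in particular $SO(2)$-symmetric. There remains the case of a strictly convex $M$ with $\inf_M(\lambda_1+\lambda_2)/H=0$.

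For this case I would first pin down the asymptotic geometry of $M$. A blow-up analysis at points where $(\lambda_1+\lambda_2)/H$ degenerates --- using Haslhofer--Kleiner's compactness for noncollapsed flows \cite{haslhofer2017mean} together with the strong maximum principle --- produces a convex, noncollapsed, ancient limit on which $\lambda_1+\lambda_2\equiv0$, hence $\lambda_1\equiv\lambda_2\equiv0$, so the limit splits off an $\mathbb R^2$ and, being convex, noncollapsed and ancient, is the shrinking cylinder $S^1\times\mathbb R^2$ (the product of the grim reaper with $\mathbb R^2$ is collapsed and therefore excluded). Feeding this back, together with convexity, noncollapsing, and the translator equation, I would establish a global structure statement: outside a compact core, $M$ is a union of ends, each of which at its natural scale is $C^\infty_{\mathrm{loc}}$-close to one of a short list of rotationally symmetric models --- the cylinders $S^2\times\mathbb R$ and $S^1\times\mathbb R^2$, and possibly tilted and rescaled copies of $\mathrm{Bowl}^2\times\mathbb R$ --- and, decisively, all of these models are invariant under the group of rotations of $\mathbb R^4$ fixing one and the same $2$-plane $P$, which contains the translating direction $v$; the coherence of the axis is forced by convexity together with the fact that $v$ is common to every end. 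Consequently, for every $\varepsilon>0$ there is $R$ such that $M\setminus B_R$ is $\varepsilon$-close to being invariant under the group $G_P\cong SO(2)$ of rotations of $\mathbb R^4$ fixing $P$ pointwise.

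It then remains to upgrade this asymptotic symmetry to an exact one, for which I would use the moving-plane method (alternatively, a Jacobi-field and unique-continuation argument in the spirit of Brendle). For a hyperplane $\Pi$ containing $P$, the reflection $R_\Pi M$ is again a convex translator with the same translating direction, because $v\in\Pi$; and the asymptotic description guarantees that, for a hyperplane parallel to $\Pi$ and far from the symmetric position, the reflected half of $M$ lies weakly to one side of $M$. Sliding that hyperplane toward the symmetric position, the strong maximum principle and the Hopf boundary lemma preclude a first interior or asymptotic point of contact unless $M$ is already symmetric across it; thus $M$ is invariant under reflection across a hyperplane parallel to $\Pi$, and as $\Pi$ ranges over all hyperplanes containing $P$ the group generated by these reflections acts on $M$ and contains $G_P\cong SO(2)$, so $M$ is $SO(2)$-symmetric. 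I expect the crux to be the asymptotic analysis of the third step --- proving that the non-$2$-convex ends are genuinely modeled, with a single consistent axis $2$-plane, on the symmetric cylinders and bowls above (thereby excluding asymmetric or incompatibly oriented ``wing''-type behavior), and extracting enough decay of the error to get the reflection argument started. The reduction steps and the reflection argument itself should be comparatively routine once this structure is in hand.
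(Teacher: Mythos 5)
The reduction to the strictly convex case and the disposal of the splitting and uniformly $2$-convex subcases follow the paper exactly, and your identification of the remaining case (blow-down $S^1\times\mathbb{R}^2$, canonical neighborhoods modeled on $S^1\times\mathbb{R}^2$ and $\text{Bowl}^2\times\mathbb{R}$) is in the same spirit as the paper's Lemmas \ref{canonical nbhd lemma} and \ref{blowdownnecklemma}. Where you part ways with the paper is the final step: you propose the moving-plane (Alexandrov reflection) method, while the paper never uses reflections. Instead, it runs an iterative, quantitative symmetry improvement — Theorems \ref{Cylindrical improvement} and \ref{bowl x R improvement} — to show that points at height $\geq 2^{j/100}\Lambda$ are $2^{-j}\epsilon_1$-symmetric, then uses Lemmas \ref{NRVFDistanceCyl} and \ref{NRVFDistanceBowl} to merge the pointwise rotation fields into a single $K^{(j)}$ per scale, and finishes by a maximum-principle argument on $f^{(j)}=\exp(2^{\lambda t})\langle K^{(j)},\nu\rangle/(H-c_j)$ followed by a compactness limit that produces an exact tangential rotation field. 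These are genuinely different closing arguments.

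The gap is exactly where you flag it, but it is larger than a routine technicality to be filled in. Your "global structure statement" asserts both a single consistent axis $2$-plane $P$ for all ends and "enough decay of the error to get the reflection argument started." Neither follows from the canonical-neighborhood analysis alone: that analysis only says that each far-out point, after rescaling, has a bounded parabolic neighborhood $\epsilon$-close to a rotationally symmetric model, and the rotation axes of those local models can a priori drift as you move along $M$; convexity together with a common translating direction $v$ does not lock them. Proving the coherence of the axis and a geometric decay rate of the asymmetry is precisely the content of the paper's two symmetry-improvement theorems and the induction in Step 1 of the main proof, and it is the hard technical core of the whole paper. Even granting that input, there is a further concern: along the cylindrical ($S^1\times\mathbb{R}^2$) ends, $H$ stays bounded below and $M$ opens in a two-parameter family of directions, so the standard starting configuration for a moving-plane argument — a reflected copy of $M$ lying strictly to one side far from the symmetric position — is not automatic and would require precisely the exponential decay of asymmetry you have not yet obtained. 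By contrast, once the paper has that decay it closes with a direct maximum-principle estimate rather than a touching argument, which sidesteps the delicate asymptotic comparison at the cylindrical ends entirely. In short: the reduction is fine, the target asymptotics are correctly identified, but the proposal defers the main technical content of the theorem rather than supplying it, and the reflection argument would still face a nontrivial obstruction even afterward.
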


  The entire translating solutions constructed by Wang in \cite{wang2011convex} have $SO(2)$ symmetry in $\mathbb{R}^4$ (i.e. entire graphs over $\mathbb{R}^3$). These solutions all have a tip that attains maximal mean curvature and they are believed to be noncollapsed, since they are entire solutions. Therefore  Theorem \ref{main theorem} is in a way consistent with Wang's construction.

  The dimension restriction can be replaced by the condition of uniform 3-convexity. This way we have a natural generalization in higher dimensions, which should be true with minor changes, as \cite{brendle2018uniqueness} demonstrated in the uniform 2-convex setting. In $\mathbb{R}^4$ of course the uniform 3-convexity is a vacuous condition.

  This paper
  will be organized as follows:

  We begin with  preliminaries and notations in section \ref{section preliminary}.
  In section \ref{section symmetry improvment}, we discuss 
  the symmetry improvement, which are inspired by the Neck Improvement Theorem first proved in \cite[Theorem 4.4]{brendle2019uniqueness}.
  Theorem \ref{Cylindrical improvement} is a generalization of the Neck Improvement Theorem to one dimension higher.
  In the uniform 3-convex setting  we need an additional symmetry improvement (Theorem \ref{bowl x R improvement}) that works for 
  the model Bowl$\times\mathbb{R}$.
  The idea originates from the Theorem 5.4 in \cite{brendle2019uniqueness} and  Theorem 2.6 in \cite{angenent2020uniqueness}, the difficulty comes from the non-cylindrical boundary of the neighborhood that looks like Bowl$\times\mathbb{R}$. This will be handled by a barrier that becomes exponentially small on the bad portion of the boundary.

  In section \ref{section proof of main theorem} we first prove the canonical neighborhood Lemmas (Lemma \ref{canonical nbhd lemma}, \ref{blowdownnecklemma}), which assert that away from a compact set, any point has a  large parabolic neighborhood that looks like shrinking $S^1\times\mathbb{R}^2$ or Bowl$\times\mathbb{R}$, provided that the soliton is not uniformly 2-convex.
  Let's briefly mention the procedure and the ingredients of the proof:
  \begin{enumerate}
    \item By the result of White \cite{white2000size, white2003nature} (cf \cite{haslhofer2017mean}) every blow-down of the convex ancient solution is self-similar,
        therefore must be one of the cylinder $S^k\times\mathbb{R}^{3-k}$. Roughly speaking, since we assumed that the solution is not uniformly 2-convex,
        the only nontrivial case is when a blow-down limit is $S^1\times\mathbb{R}^2$. Then all the blow-down limits are $S^1\times\mathbb{R}^2$ up to some rotation using the
        monotonicity formula.
    \item We argue by contradiction, choose a contradicting sequence and pass to a subsequential limit after rescaling. Moreover the limit will contain a line.
        In our case, the limit is also convex and non-collapsed. To achieve this, we need to use the long range curvature estimate and pointwise curvature derivative estimate which was first established by White \cite{white2000size, white2003nature}.
        Roughly speaking, in a mean convex ancient solution one has uniformly bounded curvature at bounded (parabolic) distance after renormalization. Moreover the curvature derivatives are controlled by the curvature.
    \item The smallest principle curvature vanishes in a time slice because there is a line in it.  The maximum principle by Hamilton \cite{hamilton1986four} (cf \cite{white2003nature}, \cite{haslhofer2017mean}) then forces the solution to split off a line.
        Consequently the limit is a product of a line and a convex, uniformly 2-convex, non-collapsed ancient solution.

    \item With the help of the blow down analysis, we can use a geometric argument to rule out the possibility that the limit is a product of a line with a compact ancient solution. To reach a contradiction it suffices to show that the convex, uniformly 2-convex, non-collapsed, noncompact ancient solution is either a cylinder or Bowl soliton, but that's exactly the result of Brendle--Choi \cite{brendle2019uniqueness, brendle2018uniqueness} .
  \end{enumerate}

  Then we proceed to prove the main theorem by combining all the above ingredients.
  The strategy to prove $SO(2)$ symmetry is to find a $SO(2)$ invariant vector field that is tangential to $M$. This part is very similar to Theorem 5.4 of \cite{brendle2019uniqueness}, we will follow closely to that with all the necessary details.

  We point out that section \ref{section symmetry improvment} is independent of section \ref{section proof of main theorem} and therefore can be of its own interest.

     Lastly, we want to mention some subsequent progress on this problem.
  Built on the result of this paper and other techniques,  Choi-Haslhofer-Hershkovits \cite{choi2021classification} obtained
  the complete classification of non-collapsed translators: they must
  be one of the solutions in the family of entire graph translators constructed by \cite{Hoffman2019}. \newline

\textbf{Acknowledgement:} The author would like to thank his advisor Simon Brendle for his helpful discussions and encouragement.

\section{Preliminary}\label{section preliminary}
Mean curvature flow of hypersurface is a family of immersion: $F: M^n\rightarrow\mathbb{R}^{n+1}$ which satisfies
\begin{align*}
  \frac{\partial F}{\partial t} & =\vec{H}
\end{align*}
Denote $M_t=F(M,t)$. By abuse of notation we sometimes identify $M$ with the image $M_0$ (or $M_{-1}$, depending on the context).

A mean curvature solution is called ancient if it exists for $(-\infty,T)$ for some $T$. An eternal solution is the solution which exists for $(-\infty,+\infty)$.

An important special case for ancient or eternal solution is the translating solutions, they are characterized by the equation
\begin{align*}
   H = \left<V,\nu\right>
\end{align*}
for some fixed nonzero vector $V$.  In this paper 
we usually use $\omega_3$ in place of $V$, where $\omega_3$ is a unit vector in the direction of the $x_3$ axis.

The surface $M_t=M_0+tV$ is a solution of the mean curvature flow  provided that $M_0$ satisfies the translator equation.

For a point $x$ on a hypersurface $M^n\subset\mathbb{R}^{n+1}$ and radius $r$, we use $B_r(x)$ to denote the Euclidean ball and use $B_{g}(x,r)$ to denote the geodesic ball with respect to the metric $g$ on $M$ induced by the embedding.
As in the  \cite[page 55]{brendle2019uniqueness} (or \cite[pages 188--190]{huisken2009mean}) for a space-time point $(\bar{x},\bar{t})$ in a mean curvature flow solution, $\hat{\mathcal{P}}(\bar{x},\bar{t},L,T)=B_{g(\bar{t})}(\bar{x},LH^{-1})\times [\bar{t}-TH^{-2},\bar{t}]$ where $H=H(\bar{x},\bar{t})$.
Again by abuse of notation we usually think of the parabolic neighborhood as its image under the embedding $F$ into the space-time $\mathbb{R}^{n+1}\times\mathbb{R}$

In the next we define the meaning of two mean curvature flow solution being close to each other in an intrinsic way:

\begin{Def}\label{closenessdef}
   Suppose that $M_t$, $\Sigma_t$ are mean curvature flow solutions and $x\in M_t$. Let $\mathcal{M}$ and ${\Sigma}$ be their space-time track. We say that the parabolic neighborhood $\mathcal{U}$=$\hat{\mathcal{P}}(x,t,L,T)$ is $\varepsilon$ close to in $C^k$ norm to a piece of $\Sigma$, if there exists $q\in\Sigma_t$ and a map $\varphi$ such that:
  \begin{enumerate}
    \item $\varphi$ is defined on a parabolic neighborhood $\bar{\mathcal{U}}=\hat{\mathcal{P}}(q,t,(1+\sqrt{\varepsilon})L,T)$ of $\Sigma$
    \item $\varphi(\hat{\mathcal{P}}(q,t,(1-\sqrt{\varepsilon})L,T)\subset\hat{\mathcal{P}}(x,t,L,T)
        \subset\varphi(\hat{\mathcal{P}}(q,t,(1+\sqrt{\varepsilon})L,T))$
    \item
       $\|\varphi(\cdot,t)-Id\|_{C^{k}}<\varepsilon$, where $Id$ is the identity map from $\Sigma_t$ to itself, and $C^k$ is computed with respect to the metric on $\Sigma_t$.
  \end{enumerate}
\end{Def}

   \begin{Rem}\label{closenessdef rmk}
   For our purpose we will take $\varphi$ to be the graph map, which means $\varphi(\cdot,t)=Id+w(\cdot,t)\nu$ where $\nu$ is the outward normal of $\Sigma_t$ and $\| w\|_{C^k}<\varepsilon$.
   \end{Rem}

\begin{Rem}
  If $M_t^j$ converge to the limit $\Sigma_t$ and $(x_j,t_j)\rightarrow (x,t)$,  then for any $\varepsilon,L,T$, the parabolic neighborhood $\hat{\mathcal{P}}(x_j,t_j,L,T)$ is $\varepsilon$ close to a piece of $\Sigma$ for sufficiently large $j$.
  If we have convergence in Euclidean space-time, i.e. in $B_R(0)\times[-R^2,0]$ for any $R>0$, this will imply the Cheeger-Gromov convergence. The converse, however,  is not true in general.
\end{Rem}

\section{Symmetry Improvement}\label{section symmetry improvment}
In the section, we prove two local symmetry improvement  for the parabolic neighborhood of a mean curvature flow solution that looks like shrinking cylinder $S^1\times\mathbb{R}^2$ or the translating Bowl$^2\times\mathbb{R}$.
We begin with some definitions.

Following  \cite[Defintion 4.1]{brendle2019uniqueness} we define the normalized rotation vector field and the notion of the symmetry.
We also define the notion of $(\varepsilon,R)$ cylindrical point, which is parallel to the notion of $\varepsilon$-neck in \cite[page 55]{brendle2019uniqueness}.
\begin{Def}
  A vector field $K$ in $\mathbb{R}^4$ is called the normalized rotation vector field if $K(x)=SJS^{-1}(x-q)$. Where $S\in O(4)$ and $J$ is a $4 \times 4$ matrix whose only nonzero entries are $J_{12}=1, J_{21}=-1$. $q\in \mathbb{R}^4$ is arbitrary. For later use, we define another matrix $J'$ whose only nonzero entries are $J'_{34}=1, J'_{43}=-1$. Explicitly,

  \[J=\begin{bmatrix}
    0 & -1 & 0 & 0 \\
    1 & 0 & 0 & 0 \\
    0 & 0 & 0 & 0 \\
    0 & 0 & 0 & 0
  \end{bmatrix}
  \ \ \ J'=\begin{bmatrix}
    0 & 0 & 0 & 0 \\
    0 & 0 & 0 & 0 \\
    0 & 0 & 0 & -1 \\
    0 & 0 & 1 & 0
  \end{bmatrix}\]
\end{Def}

It's clear that a normalized rotation vector field is a rotation and a translation of the standard rotational field $K_0(x)=Jx$ rotating around the plane $x_1=x_2=0$(which is called the rotation plane). The norm of $K$ is equal to the distance away from the rotation plane.

\begin{Def}\label{varepsilonsymmetrydef}
  Let $M_t$ be a  mean curvature flow solution of positive mean curvature.
  A space-time point $(\bar{x},\bar{t})$ is called $\varepsilon$ symmetric if there exist a normalized rotation field $K$ such that $| \left<K,\nu\right>| H\leq \varepsilon$ and $| K| H\leq 5$ in a parabolic neighbourhood $\hat{\mathcal{P}}(\bar{x},\bar{t},100,100^2)$
\end{Def}

\begin{Def}\label{varepsiloncylindricaldef}
  A point $(\bar{x},\bar{t})$ is said to be $(\varepsilon,R)$ cylindrical if the parabolic neighborhood $\hat{\mathcal{P}}(\bar{x}, \bar{t},R^2,R^2)$ is $\varepsilon$ close in $C^{10}$  to a family of shrinking $S^1\times \mathbb{R}^2$ after a parabolic rescaling
  such that $H(\bar{x},\bar{t})=1$.
\end{Def}

We need to control how far 
two normalized rotation vector fields are away from each other when they are both very tangential to a piece of cylinder $S^1\times \mathbb{R}^2$ or a piece of $\text{Bowl}\times \mathbb{R}$. The following Lemma \ref{NRVFDistanceCyl} and \ref{NRVFDistanceBowl} are both inspired by the Lemma 4.2 in \cite{brendle2019uniqueness}.

Before that we  package up several linear algebra facts:

\begin{Lemma}\label{symmetryrigidity}
  If K is a normalized rotation  vector field and $U$ is an open set in $S^1\times\mathbb{R}^2$, $V$ is an open set in $Bowl^2\times \mathbb{R}$. Here the $Bowl^2$ is the bowl soliton which is rotationally symmetric in the $x_1x_2$-plane.
  Let $A\in so(4)$ and $ b\in \mathbb{R}^4$
  Then
  \begin{enumerate}
    \item If $\left<K,\nu\right>=0$ in $U$, then $K(x)=\pm Jx$ or $\pm J'(x-a)$, where $a\in \mathbb{R}^4$ is arbitrary.
    \item If $\left<K,\nu\right>=0$ in $V$ then $K=\pm Jx$
    \item If  $\left<[A,J]x-Jb,\nu\right>=0$ in $U$ then $[A,J]x-Jb\equiv 0$. If in addition $A_{21}=A_{43}=0$, then $A=0, b=0$
    \item If  $\left<[A,J]x-Jb,\nu\right>=0$ in $V$ then $[A,J]x-Jb\equiv 0$.
        If in addition $A_{21}=A_{43}=0$, then $A=0, b=0$
  \end{enumerate}

\end{Lemma}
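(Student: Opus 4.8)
The plan is to handle all four items in parallel. Each one asserts that a particular affine --- hence Killing --- vector field $Y=Cx+c$ of $\mathbb{R}^4$, with $C\in so(4)$, is pointwise tangent to an open piece of one of the two model hypersurfaces; the strategy is first to pin down $C$ and $c$ from this tangency, and then to exploit the extra structure that $Y$ carries ($Y=A(x-q)$ a \emph{simple} rotation in (1) and (2); $Y=[A,J]x-Jb$ a commutator in (3) and (4)). Decompose $\mathbb{R}^4=\mathbb{R}^2_{12}\oplus\mathbb{R}^2_{34}$, write $x=(y,z)$, and split every matrix into $2\times2$ blocks, so that $J=\mathrm{diag}(j,0)$, $J'=\mathrm{diag}(0,j)$ with $j=\left(\begin{smallmatrix}0&-1\\1&0\end{smallmatrix}\right)$, and the diagonal blocks of any $A\in so(4)$ are multiples of $j$. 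I take the model cylinder $S^1\times\mathbb{R}^2$ to be $\{|y|=r\}$ (so its circle factor lies in $\mathbb{R}^2_{12}$, which is forced by wanting $\pm Jx$ tangent), with unit normal $\nu=(y/|y|,0)$; and the model $\mathrm{Bowl}^2\times\mathbb{R}$ to be $\{(\bar p,x_4):\bar p\in\mathrm{Bowl}^2\subset\mathbb{R}^3\}$ with $\mathrm{Bowl}^2$ rotationally symmetric about the $x_3$-axis, so that $\nu=(\bar\nu,0)$ with $\bar\nu$ the normal of the Bowl in $\mathbb{R}^3$, and so that the line $\{p+te_4\}$ through each of its points lies on it.

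\emph{The cylinder (parts (1) and (3)).} Since $C$ is antisymmetric, $\langle Cx+c,\nu\rangle=|y|^{-1}\langle C_{12}z+c_1,y\rangle$, where $C_{12}$ is the upper-right block and $c_1$ the $\mathbb{R}^2_{12}$-component of $c$ (the diagonal-block term drops out). As $y$ sweeps an open arc of the circle --- which affinely spans $\mathbb{R}^2_{12}$ --- and $z$ an open set of $\mathbb{R}^2_{34}$, vanishing of this expression forces $C_{12}=0$ and $c_1=0$. For (1), $Y=A(x-q)$, so $A=\mathrm{diag}(\lambda j,\mu j)$; the normalization that a rotation field have $A^2$ equal to minus a rank-$2$ orthogonal projection leaves only $(\lambda,\mu)\in\{(\pm1,0),(0,\pm1)\}$, i.e.\ $A=\pm J$ or $A=\pm J'$. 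In the first case $c=-Aq\in\mathbb{R}^2_{12}$ and $c_1=0$ gives $c=0$, so $K=\pm Jx$; in the second $c\in\mathbb{R}^2_{34}$, which is exactly the freedom to write $K=\pm J'(x-a)$ with $a\in\mathbb{R}^2_{34}$. For (3) a block computation gives $[A,J]=\left(\begin{smallmatrix}[A_{11},j]&-jA_{12}\\A_{21}j&0\end{smallmatrix}\right)$, where $[A_{11},j]=0$ automatically, while $Jb\in\mathbb{R}^2_{12}$; hence $C_{12}=0$ and $c_1=0$ upgrade to $[A,J]=0$ and $Jb=0$, that is $[A,J]x-Jb\equiv0$. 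If moreover the $(2,1)$- and $(4,3)$-entries of $A$ vanish, then $[A,J]=0$ already makes $A$ block-diagonal, so $A=0$, and $Jb=0$ gives $b=0$ once $b$ is taken orthogonal to the rotation plane (as it is in these arguments; without that normalization the sharp conclusion is $Jb=0$).

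\emph{The bowl (parts (2) and (4)).} Restricting $\langle Y,\nu\rangle=0$ to a line $p+te_4$ contained in the open piece, along which $\nu$ is constant, and differentiating in $t$ gives $\langle Ce_4,\nu(p)\rangle=0$; since $\nu(p)=(\bar\nu(p),0)$ with $\bar\nu$ covering an open subset of $S^2$ (strict convexity of the Bowl) and $\langle Ce_4,e_4\rangle=0$, we get $Ce_4=0$, so $C=\mathrm{diag}(\bar C,0)$ with $\bar C\in so(3)$, and the condition reduces to $\langle\bar C\bar p+\bar c,\bar\nu(p)\rangle=0$ on an open piece of the Bowl surface. Now comes the one non-formal step: the Bowl solves the quasilinear elliptic translator equation, hence is real-analytic and connected, so this forces the Killing field $\bar C\bar x+\bar c$ of $\mathbb{R}^3$ to be tangent to the whole Bowl, hence to lie in the Lie algebra of $\mathrm{Isom}(\mathrm{Bowl}^2)$; as the Bowl has non-constant Gauss curvature this algebra is one-dimensional, spanned by the rotation $\bar j$ about the axis, so $\bar C\bar x+\bar c=\kappa\,\bar j\,\bar x$ --- with \emph{no} translation term --- i.e.\ $\bar c=0$ and $\bar C=\kappa\bar j$. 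Back in $\mathbb{R}^4$ this says $C=\kappa J$, and $c$ has vanishing $\mathbb{R}^3$-part and vanishing $e_4$-component (the latter since it equals $\pm\langle q,Ae_4\rangle$ resp.\ $\pm\langle b,Je_4\rangle$, and $Ae_4=Je_4=0$), so $c=0$. For (2), $C=A=\kappa J$ and $c=0$ give $K=\kappa Jx$, and simplicity forces $\kappa=\pm1$, so $K=\pm Jx$. For (4), $C=[A,J]=\kappa J$; but the $(1,1)$-block of $[A,J]$ is $[A_{11},j]=0$ while that of $\kappa J$ is $\kappa j$, so $\kappa=0$, whence $[A,J]=0$ and $Jb=-c=0$, i.e.\ $[A,J]x-Jb\equiv0$; the additional hypothesis $A_{21}=A_{43}=0$ then gives $A=0$ and $b=0$ exactly as in (3).

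I expect the only genuine obstacle to be the rigidity invoked in the third paragraph: that the affine Killing fields of $\mathbb{R}^3$ tangent to the Bowl surface are precisely the multiples of the rotation about its axis, with no translational part. This uses real-analyticity (to propagate tangency from an open piece to all of the Bowl), non-constant Gauss curvature (so $\dim\mathrm{Isom}(\mathrm{Bowl}^2)\le 1$), and the fact that the Bowl contains no straight line (so that the only lines in $\mathrm{Bowl}^2\times\mathbb{R}$ run in the $e_4$-direction --- this legitimizes the ``restrict to a line and differentiate'' step and is precisely what breaks the symmetry between the two factors). Everything else --- the block calculus, the automatic vanishing of $[A_{11},j]$, and the consequences of $A$ being a simple rotation --- is routine linear algebra.
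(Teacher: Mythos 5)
The paper states Lemma~\ref{symmetryrigidity} without proof, presenting it as a collection of packaged linear-algebra facts, so there is no in-text argument to compare against; your proof is correct and supplies what the paper omits. The block decomposition, the observation that $[A_{11},j]=0$ automatically (the diagonal blocks of $A\in so(4)$ being multiples of $j$), and the reduction to $C_{12}=0$, $c_1=0$ on the cylinder are exactly the right moves. For the Bowl, restricting to the $e_4$-line and differentiating to isolate $Ce_4=0$ is clean, and the appeal to real-analyticity together with the one-dimensionality of the Bowl's Killing algebra is valid. If you prefer to avoid the isometry-group rigidity, the Bowl case also yields to a direct computation: parametrizing the Bowl as $(r\cos\theta,r\sin\theta,\phi(r))$ with inward normal proportional to $(-\phi'\cos\theta,-\phi'\sin\theta,1)$, the condition $\left<\bar{C}\bar{p}+\bar{c},\bar{\nu}\right>=0$ unwinds to a linear identity in $1,\cos\theta,\sin\theta$ whose $\cos\theta$ and $\sin\theta$ coefficients are linear combinations of $\phi\phi'+r$ and $\phi'$; since $\phi+r/\phi'$ is non-constant along the Bowl profile (it tends to $n$ at the tip and to $+\infty$ at the end), these two functions are independent on any $r$-interval, which forces the off-axis entries of $\bar{C}$ and all components of $\bar{c}$ to vanish, recovering $\bar{C}=\kappa\bar{j}$, $\bar{c}=0$ directly. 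One caveat, which you flag yourself: the conclusion $b=0$ in items (3) and (4) follows only under a normalization $b\perp\ker J$; otherwise the hypothesis yields only $Jb=0$. Since the paper invokes only the first clause of (3) and (4) (the conclusion $[A,J]x-Jb\equiv 0$) in the proofs of Lemmas~\ref{NRVFDistanceCyl} and~\ref{NRVFDistanceBowl}, this slight imprecision in the lemma statement is harmless.
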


\begin{Lemma}\label{NRVFDistanceCyl}
  There exist uniform constants $ \ \varepsilon_{cyl}<1/100$ and $C>1$ with the following properties: for any $\varepsilon\leq \varepsilon_{cyl}$,
   suppose that $K^{(1)},K^{(2)}$ are normalized rotation  vector fields,
    $M$ is a hypersurface in $\mathbb{R}^4$ which is $\varepsilon_{cyl}$ close
    (in the $C^{4}$-norm)
    to  $S^1\times \bar{B}_{20}(0)$ , a subset   of $S^1\times \mathbb{R}^2$, where the radius of $S^1$ is 1 and $\bar{B}_{20}(0)$ is the disk in $\mathbb{R}^2$ with radius 20. $\bar{x} \in M$ is a point $2\varepsilon_{cyl}$ close to $S^1\times \{0\}$
    in the sense that there is a $\bar{q}\in S^1\times \{0\} \subset S^1\times B_{20}(0)$ such that $| \bar{x} \bar{q}| \leq 2\varepsilon_0$
    (by $\varepsilon_{cyl}$ closeness such an $\bar{x}$ exists).
    If
  \begin{itemize}
    \item $| \left<K^{(i)},\nu\right>| H\leq \varepsilon$ in $B_{g}(\bar{x},H^{-1}(\bar{x}))\subset M$
    \item $| K^{(i)}| H\leq 5$ in $B_{g}(\bar{x},10H^{-1}(\bar{x}))\subset M$
  \end{itemize}
  for i=1,2, where $g$ denotes the metric on $M$.
  Then

  \begin{align*}
    \min\left\{ \sup_{B_{LH(\bar{x})^{-1}}(\bar{x})}| K^{(1)}-K^{(2)}| H(\bar{x}),\sup_{B_{LH(\bar{x})^{-1}}(\bar{x})}| K^{(1)}+K^{(2)}| H(\bar{x})\right\} \leq C(L+1)\varepsilon
  \end{align*}
  for any $L\geq 1$.

\end{Lemma}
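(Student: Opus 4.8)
The plan is to reduce the estimate to a quantitative rigidity statement for a \emph{single} normalized rotational vector field, and then to combine two instances of it using only the linear algebra of affine vector fields. Since $M$ is $\epsilon_{cyl}$-close to the model (on which $H\equiv1$) and $\bar x$ lies near the core circle $S^1\times\{0\}$, we have $\tfrac12\le H\le 2$ on the part of $M$ under consideration, so the weight $H(\bar x)$ in the conclusion is harmless and every geodesic ball $B_g(\bar x,\rho)$ with $\rho\le 10$ sits inside a Euclidean ball of comparable radius. I fix coordinates so that the model is $\{x_1^2+x_2^2=1\}\times\mathbb{R}^2_{x_3x_4}$, with unit normal $\nu_0=\pm(x_1,x_2,0,0)$, so that $Jx$ is the unit-speed rotation of the $S^1$-factor (with $|Jx|\approx1$ on $M$) and the fields $J'(x-a)$ are the rotations inside the flat $\mathbb{R}^2_{x_3x_4}$-factor. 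The decisive geometric remark is that the geodesic ball $B_g(\bar x,10H^{-1}(\bar x))$ is ``thin'' in the $x_1x_2$-directions (it does not wrap around the unit circle, on which $x_1^2+x_2^2$ stays near $1$) but ``long'' in the $x_3x_4$-directions, reaching Euclidean distance $\approx 10$ there; hence $|Jx|\approx1$ on this ball while $|J'(x-a)|$ reaches at least $\approx 10-C\epsilon_{cyl}$, so the hypothesis $|K|H\le5$ on this ball excludes every member of the $J'$-family.

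The core step is: there are uniform $\epsilon_1>0$ and $C_1>0$ such that if $M$ is $\epsilon_{cyl}$-close in $C^2$ to the model, $\bar x$ is near the core circle, and $K$ is a normalized rotational field with $|\langle K,\nu\rangle|H\le\epsilon\le\epsilon_1$ on $B_g(\bar x,H^{-1}(\bar x))$ and $|K|H\le5$ on $B_g(\bar x,10H^{-1}(\bar x))$, then $\sup_{B_1(\bar x)}|K-sJx|\le C_1\epsilon$ for some $s\in\{\pm1\}$. First a compactness argument gives the qualitative version with $C_1\epsilon$ replaced by an arbitrarily small fixed $\delta_0$: a hypothetical contradicting sequence produces $M_k\to$ model in $C^2$, $\bar x_k\to\bar x_\infty$ on the core, and normalized rotational fields $K_k$ with $|\langle K_k,\nu_k\rangle|\to0$ on the unit patch and $|K_k|\le5$ on the size-$10$ patch; the last bound keeps the rotation data (the axis $2$-plane and its offset) in a compact set, so a subsequence converges to a normalized rotational field $K_\infty$ with $\langle K_\infty,\nu_0\rangle=0$ on an open subset of the model and $|K_\infty|\le5$ on the size-$10$ patch, whence $K_\infty\in\{\pm Jx\}\cup\{\pm J'(x-a)\}$ by Lemma \ref{symmetryrigidity}(1), and the geometric remark rules out the $J'$-alternative, so $K_\infty=\pm Jx$, a contradiction. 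Since $+Jx$ is isolated in $\{\pm Jx\}$ (the two differ by $2|Jx|\approx2$ on $B_1(\bar x)$), for $\delta_0$ small this pins down the sign $s$, say $s=+1$. To upgrade to the linear bound I would parametrize the normalized rotational fields near $Jx$ by $K(x)=e^{A}Je^{-A}(x-q)$ with $A\in so(4)$ small in the gauge $A_{21}=A_{43}=0$ (two conditions, transverse to the stabilizer $\mathrm{span}(J,J')$ of $J$) and $q$ small in $\mathbb{R}^2_{x_1x_2}$, so that $K-Jx=[A,J]x-Jq+O(|A|^2+|A|\,|q|)$ on $B_1(\bar x)$ and $\mathrm{dist}(K,Jx)\approx|A|+|q|$; since $\langle Jx,\nu_0\rangle\equiv0$ on the model, $\langle K,\nu\rangle$ agrees with $\langle [A,J]x-Jq,\nu_0\rangle$ on the unit patch $U$ up to lower-order errors, and by Lemma \ref{symmetryrigidity}(3) the linear map $(A,q)\mapsto\langle[A,J]x-Jq,\nu_0\rangle|_U$ is injective on this finite-dimensional gauge slice, hence bounded below; feeding in $|\langle K,\nu\rangle|\le 2\epsilon$ (after absorbing the lower-order errors, which is where the smallness of $\epsilon_{cyl}$ is used) yields $|A|+|q|\le C\epsilon$ and therefore $\sup_{B_1(\bar x)}|K-Jx|\le C\epsilon$.

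Given the core step, I apply it to $K^{(1)}$ and $K^{(2)}$: there are $s_1,s_2\in\{\pm1\}$ with $\sup_{B_1(\bar x)}|K^{(i)}-s_iJx|\le C_1\epsilon$. Replacing $K^{(2)}$ by $-K^{(2)}$ if necessary, I may assume $s_1=s_2$; then $V:=K^{(1)}-K^{(2)}$ is a Killing field of $\mathbb{R}^4$ (skew-symmetric linear part plus a constant) with $\sup_{B_1(\bar x)}|V|\le 2C_1\epsilon$. A Killing field bounded by $\eta$ on a unit ball has linear part of norm $\le C\eta$ and value $\le\eta$ at $\bar x$, hence is bounded by $C(L+1)\eta$ on $B_L(\bar x)$ (using $|\bar x|\le 2$); applying this with $\eta=2C_1\epsilon$ and using $B_{LH(\bar x)^{-1}}(\bar x)\subset B_{2L}(\bar x)$ (as $H(\bar x)\ge\tfrac12$) gives $\sup_{B_{LH(\bar x)^{-1}}(\bar x)}|V|\le C(L+1)\epsilon$. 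Multiplying by $H(\bar x)\le2$, and recalling that $V$ stands for $K^{(1)}-K^{(2)}$ (or for $K^{(1)}+K^{(2)}$ in the case where a sign flip was needed), yields the claimed bound on the minimum. The main obstacle is the linear upgrade inside the core step, namely the transversality of $K\mapsto\langle K,\nu\rangle$ to $\{\pm Jx\}$ with the defect measured linearly in $\epsilon$; this is exactly where Lemma \ref{symmetryrigidity}(3) and the correct choice of gauge enter, and where one must check that the deviation of $M$ from the exact model does not degrade the $\epsilon$-linear dependence.
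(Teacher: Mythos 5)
The core step of your argument---showing $\sup_{B_1(\bar x)}|K-sJx|\le C_1\epsilon$---is false as stated, and this is where the proof breaks down. The quantity $|K - Jx|$ cannot be controlled linearly in $\epsilon$ alone; only in $\epsilon + \epsilon_{cyl}$. To see the obstruction concretely: take $M = R_\theta \Sigma$, a rigid rotation of the exact model by a small angle $\theta$ of order $\epsilon_{cyl}$, and take $K = R_\theta J R_\theta^{-1} x$, the rotated field. Then $K$ is exactly tangent to $M$, so $|\langle K,\nu\rangle|H \leq \epsilon$ holds for every $\epsilon>0$, yet $\sup_{B_1(\bar x)}|K - Jx|$ is of order $\theta \sim \epsilon_{cyl}$, not $\epsilon$. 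In your ``linear upgrade'' the culprit is the term you label as lower-order: $\langle Jx,\nu_M\rangle$ is only $O(\epsilon_{cyl})$ on $M$ (it vanishes on the exact model but not on $M$), and this error is not multiplied by anything small in $A$ or $q$, so it survives and forces the weaker bound $|A|+|q|\leq C(\epsilon + \epsilon_{cyl})$. Feeding that into your final step gives $\sup_{B_1(\bar x)}|K^{(1)}\mp K^{(2)}|\le C(\epsilon+\epsilon_{cyl})$, which is strictly weaker than the lemma's conclusion $C\epsilon$ when $\epsilon\ll\epsilon_{cyl}$.

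The lemma is nevertheless true, because the deviation from $Jx$ that your argument charges separately to each $K^{(i)}$ is the \emph{same} deviation for both (it reflects the fixed geometry of $M$), and it cancels in $K^{(1)}-K^{(2)}$. But you can only see this cancellation by working with the difference directly, which is exactly what the paper does: it forms the normalized difference $W^j = (K^{(1,j)}-K^{(2,j)})/Q_j$, observes that $|\langle W^j,\nu\rangle|\le 2/(jH)\to 0$ purely from the hypotheses on $K^{(i,j)}$ (no reference to $Jx$ anywhere in this estimate), passes to a nonzero affine limit $W^\infty = [A,J]x - Jb$, and then applies Lemma \ref{symmetryrigidity}(3) to the \emph{difference field}, not to each $K^{(i)}$. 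Your qualitative compactness step, the thin/long-directions argument ruling out the $J'$-family, and the $A_{21}=A_{43}=0$ gauge are all in agreement with the paper's proof, but the triangulation through $Jx$ loses the essential cancellation and must be replaced by an argument that estimates $K^{(1)}-K^{(2)}$ at once.
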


\begin{proof}
   We first prove for $L=10$.
   Argue by contradiction, suppose that the conclusion is not true, then there exist a sequence of pointed hypersurfaces $(M_j,p_j)$ that are $1/j$ close to $S^1\times \bar{B}_{20}(0)\subset S^1\times \mathbb{R}^2$ in $C^{10}$ norm
   and $| p_j-(1,0,..,0)| \leq 1/j$. We denote the metric on $M_j$ induced by the embedding to be $g_j$.

    Moreover we may assume that there exist normalized rotation fields $K^{(i,j)},\ i=1,2,$ and $\varepsilon_j<1/j$ such that
    \begin{itemize}
      \item $| \left<K^{(i,j)},\nu\right>| H\leq \varepsilon_j$ in $B_{g_j}(p_j,{H^{-1}(p_j)})$
      \item $| K^{(i,j)}| H\leq 5$  in $B_{g_j}(p_j,10H^{-1}(p_j))$
    \end{itemize}
    for $i=1,2$
    \begin{itemize}
      \item  \begin{align*}
                \min\left\{
                \begin{array}{c}
                  \sup_{B_{10H(p_j)^{-1}}(p_j)}| K^{(1,j)}-K^{(2,j)}| H(p_j) \\
                  \sup_{B_{10H(p_j)^{-1}}(p_j)}| K^{(1,j)}+K^{(2,j)}| H(p_j)
                \end{array}
                \right\}\geq j\varepsilon_j
             \end{align*}
    \end{itemize}

   Therefore $M_j$ converges to $M_{\infty}=S^1\times \bar{B}_{20}(0)\subset S^1\times \mathbb{R}^2$ whose metric is denoted by $g_{\infty}$.
    Moreover $p_j\rightarrow p_{\infty}=(1,0,..,0)$ and $H(p_j)\rightarrow 1$.

   Suppose that $K^{(i,j)}(x)=S_{(i,j)}JS_{(i,j)}^{-1}(x-b_{(i,j)})$ where $S_{(i,j)}\in O(4)$ and $b_{(i,j)}\in\mathbb{R}^4$.
   Without loss of generality , we may assume that $b_{(i,j)}$ {is} orthogonal to the kernel of the matrix $JS_{(i,j)}^{-1}$. This way $| S_{(i,j)}JS_{(i,j)}^{-1}b_{(i,j)}| =| b_{(i,j)}| $.

   Now we have $| S_{(i,j)}JS_{(i,j)}^{-1}(p_j-b_{(i,j)})| \leq5H(p_j)^{-1}<5+5/j$, therefore
   \[| b_{(i,j)}| =| S_{(i,j)}JS_{(i,j)}^{-1}b_{(i,j)}| <5+5/j+| p_j| <6+6/j<10\] for large $j$.
   Passing to a further subsequence such that $S_{(i,j)}\rightarrow S_{(i,\infty)}$ and $b_{(i,j)}\rightarrow b_{(i,\infty)}$.
   Thus $K^{(i,j)}\rightarrow K^{(i,\infty)}$ locally smoothly, where $K^{(i,\infty)}(x)=S_{(i,\infty)}JS_{i,\infty}^{-1}(x-b_{(i,\infty)})$.

   The convergence also implies that $\left<K^{(i,\infty)},\nu\right>=0$ in $B_{g_{\infty}}(p_{\infty},1)$ and $| K^{(i,\infty)}| \leq 5$ in $B_{g_{\infty}}(p_{\infty},10)$ (we removed $H$ because $H\equiv 1$ on $M_{\infty}$). By Lemma \ref{symmetryrigidity}, $K^{(i,\infty)}(x)=\pm Jx$ or $\pm J'(x-a)$.

   If $K^{(i,\infty)}=\pm J'(x-a)$, then $\sup_{B_{g_{\infty}}(p_{\infty},10)}| K^{(i,\infty)}|
   \geq 10+| a^T-p_{\infty}^T| \geq10$ (the superscript $^T$ means projection onto $\mathbb{R}^2$), this is impossible. So $K^{(i,\infty)}=\pm Jx$.

   Without loss of generality we may assume $K^{(1,\infty)}=K^{(2,\infty)}=Jx$, for the other cases we simply flip the sign and the same argument applies. Hence $S_{(i,\infty)}JS_{(i,\infty)}^{-1}=J$ and $S_{(i,\infty)}JS_{(i,\infty)}^{-1}b_{(i,\infty)}=0$. Since $b_{(i,\infty)}$ is orthogonal to the kernel of $JS_{(i,\infty)}^{-1}$, we know that $b_{(i,\infty)}=0$. In particular we know that $b_{(i,j)}\rightarrow 0$.

   We may also assume that $S_{(i,\infty)}= Id$. For otherwise we can replace $S_{(i,j)}$ by $\tilde{S}_{(i,j)}=S_{(i,j)}S_{(i,\infty)}^{-1}$, then   $\tilde{S}_{(i,j)}\rightarrow Id$.
   It's straightforward to check that $\tilde{S}_{(i,j)}J\tilde{S}_{(i,j)}^{-1}=S_{(i,j)}JS_{(i,j)}^{-1}$, so such a replacement doesn't change $K^{(i,j)}$.

    Consequently $S_{(1,j)}^{-1}S_{(2,j)}$ is close to $Id$. Since exp is a local diffeomorphism between $so(4)$ and $SO(4)$ near the origin and the Id respectively,  we can write $S_j=S_{(1,j)}^{-1}S_{(2,j)}=\exp(A_j)$ with $A_j\in so(4)$ being small.
    Thus $S_j=Id+A_j+O(| A_j| ^2)$. Also we have $S_j^{-1}=\text{exp}(-A_j)=Id-A_j+O(|
    A_j| ^2)$.

    We may assume that $(A_j)_{21}=(A_j)_{43}=0$. If not, we can replace $S_{(2,j)}$ by $\tilde{S}_{(2,j)}=S_{(2,j)}\text{exp}(-\eta_jJ-\theta_j J')$.  This doesn't change $K^{(2,j)}$, since the matrix $\text{exp}(-\eta_jJ-\theta_j J')\in SO(4)$ commutes with $J$.
    Then $S_j$ becomes $\tilde{S}_j=\exp(A_j)\exp(-\eta_jJ-\theta_jJ')$. Suppose that $\eta_j$ and $\theta_j$ are small, then the local diffeomorphism property of exp and Baker-Campbell-Hausdorff formula implies that there is a unique small matrix $\tilde{A}\in so(4)$ such that $\tilde{S}_j=\exp(\tilde{A_j})$ and $\tilde{A}_j=A_j-\eta_jJ-\theta_jJ'+O([A_j,\eta_jJ+\theta_jJ'])$ is a smooth function of $A_j, \eta_j, \theta_j$.
    Note that $A_j=0, \eta_j=\theta_j=0 \Rightarrow \tilde{A}_j=0$ , also $[A_j,\eta_jJ+\theta_jJ']=o(| A_j|  | \eta_j| +| A_j|  | \theta_j| )$. So
    $(\tilde{A}_j)_{21}=(A_j)_{21}-\eta_j+o(| A_j|  | \eta_j| +| A_j|  | \theta_j| )$ and
      $(\tilde{A}_j)_{43}=(A_j)_{43}-\theta_j+o(| A_j| +| \theta_j| )$.
    Now Implicit Function Theorem gives that, for all small enough $A_j$ there exists small $\eta_j, \theta_j$ near $(A_j)_{21}, (A_j)_{43}$ such that $(\tilde{A}_j)_{21}=(\tilde{A}_j)_{43}=0$.

   With $A_{21}=A_{43}=0$, a direct computation gives that $| [A_j,J]x| =| A_jx| $ for any $x\in\mathbb{R}^4$.

   Define \[W^j=\frac{K^{(1,j)}-K^{(2,j)}}{\sup\limits_{B_{10H(p_j)^{-1}}(p_j)}| K^{(1,j)}-K^{(2,j)}| }\]

   Recall that $S_{(i,j)}=Id+o(1)$ and $S_j=S_{(1,j)}^{-1}S_{(2,j)}$. For simplicity we write $P_j=S_{(1,j)}JS_{(1,j)}^{-1}-S_{(2,j)}JS_{(2,j)}^{-1}$ and $c_j=-S_{(1,j)}JS_{(1,j)}^{-1}(b_{(1,j)}-b_{(2,j)})$. Then
   \begin{align}\label{Kdiff}
     K^{(1,j)}(x)-K^{(2,j)}(x)=&P_j(x-b_{(2,j)})+c_j
   \end{align}

   Now we compute $P_j$:
   \begin{align}
     P_j= & S_{(1,j)}(J-S_jJS_j^{-1})S_{(1,j)}^{-1}\\
     =&S_{(1,j)}(J-\exp(A_j)J\exp(-A_j))S_{(1,j)}^{-1} \notag\\
     =&(Id+o(1))(-[A_j,J]+o(| A_j| ))(Id+(1)) \notag\\
     =& -[A_j,J]+o(| A_j| )  \notag
   \end{align}
    so $|
    A_j| \leq C| [A_j,J]| \leq C(| P_j| +o(| A_j| ))$, absorbing $o(| A_j| )$ in  the left to get
    \begin{align}\label{A less than P}
      | A_j| \leq C| P_j|
    \end{align}
    Let $\sup\limits_{B_{10H(p_j)^{-1}}(p_j)}| K^{(1,j)}-K^{(2,j)}|  = Q_j$.

    Since $B_1(b_{(2,j)})\subset B_{10H(p_j)^{-1}}(p_j)$, we can put $x=b_{(2,j)}$ in  (\ref{Kdiff}) to get that
    \begin{align}\label{c less than Q}
      | c_j| \leq Q_j
    \end{align}
    Moreover we have:
    \begin{align}\label{P < Q}
          | P_j| \leq& C\sup\limits_{B_1(b_{(2,j)})}| P_j(x-b_{(2,j)})| \\
          \leq  & C\sup\limits_{B_{10H(p_j)^{-1}}(p_j)}| K^{(1,j)}-K^{(2,j)}| +C| c_j|  \leq  CQ_j \notag
    \end{align}

    By (\ref{A less than P}), (\ref{P < Q}) we have
    \begin{align}\label{A < Q}
      | A_j| \leq C| P_j| \leq CQ_j
    \end{align}

    Now with (\ref{c less than Q}) (\ref{A < Q}) we have that $| A_j| /Q_j$ and $| c_j| /Q_j$ are uniformly bounded.
    It's then possible to pass to a subsequence to get $A_j/Q_j\rightarrow A$ and $c_j/Q_j\rightarrow c$ for some $A\in so(4)$ with $A_{21}=A_{43}=0$ and $c\in\mathbb{R}^4$.

    Note that $c_j/Q_j\in \text{Im}(S_{(1,j)}J)=\ker(J'S_{(1,j)}^{-1})$, so we have

    \begin{align}\label{}
      J'c=\lim\limits_{j\rightarrow \infty} J'\left(\frac{c_j}{Q_j}\right) & =\lim\limits_{j\rightarrow \infty} (J'-J'S_{(1,j)}^{-1})\left(\frac{c_j}{Q_j}\right)=0\cdot c=0
    \end{align}
    This implies that   $c\in \ker(J')=\text{Im}(J)$, therefore we can write $c=-Jb$.

    To summarize, we have the following:
    \begin{align}\label{}
      W^j(x)=\frac{(A_j+o(| A_j| ))(x-b_{(2,j)})+c_j}{Q_j}\rightarrow [A,J]x-Jb := W^{\infty}(x)
    \end{align}
     uniformly in any compact set.

    Note that $\sup_{B_{10H(p_j)^{-1}}(p_j)}| W^j| =1$, thus $W^{\infty}$ can not be identically 0.

   Since
    \begin{align}\label{}
      \left\{
    \begin{array}{c}
      | \left< K^{(1,j)}-K^{(2,j)},\nu \right>|  \leq 2H^{-1}\varepsilon_j \text{ in } B_{g_j}(p_j,H(p_j)^{-1}) \\
      \sup_{B_{10H(p_j)^{-1}}(p_j)}| K^{(1,j)}-K^{(2,j)}| \geq jH^{-1}\varepsilon_j
    \end{array}\right.
    \end{align}
    we know that  $|\left<W^j, \nu \right>|\leq 2/j\rightarrow 0$ in $B_{g_j}(p_j,H(p_j)^{-1})$

    Taking limit we have  $\left<W^{\infty},\nu\right>=0$ in $B_{g_{\infty}}(p_{\infty},1)$. By Lemma \ref{symmetryrigidity}, $ W^{\infty}\equiv 0$, a contradiction.
    So the result is proved for $L=10$.

    For general $L$, notice that $(K^{(1)}-K^{(2)})(x)$ is in the form of $Ax+b$ for some fixed matrix $A$ and $b\in \mathbb{R}^4$.

   Since $\sup_{B_{10H(\bar{x})^{-1}}(\bar{x})}| K^{(1)}-K^{(2)}| H(\bar{x})\leq C\varepsilon$, we have $| A\bar{x}+b| \leq CH(\bar{x})^{-1}\varepsilon$ and
   $\sup_{| x| \leq 1}| Ax| \leq C\varepsilon H(\bar{x})^{-1}/10$.
   Thus
   \begin{align}\label{}
     \sup_{B_{LH(\bar{x})^{-1}}(\bar{x})}| K^{(1)}-K^{(2)}| H(\bar{x})\leq \left(| A\bar{x}+b| + \sup_{| y| \leq LH(\bar{x})^{-1}}Ay\right)H(\bar{x})\leq C\varepsilon L/10+C\varepsilon
   \end{align}

    The Lemma is proved.
\end{proof}

\begin{Lemma}\label{NRVFDistanceBowl}
   Given $\delta<1/2$, there exists $ \varepsilon_{bowl}$ and $C>1$  depending on $\delta$  with the following properties: let $\Sigma\subset\mathbb{R}^3$ be the Bowl soliton with maximal mean curvature $1$ and $M\subset\mathbb{R}^4$ be a hypersurface with metric $g$.
    Suppose that $ \varepsilon\leq \varepsilon_{bowl}$, \
     $ q\in  \Sigma\times\mathbb{R}$ , and suppose that   $M$ is a graph over the geodesic ball in $\Sigma\times\mathbb{R}$ of radius $2H(q)^{-1}$ centered at $q$. After rescaling by $H(q)^{-1}$,
     the graph $C^4$ norm is $\leq \varepsilon_{bowl}$.
     Let $\bar{x}\in M$ be the point that has rescaled distance $\leq \varepsilon_{bowl}$ to $q$. Suppose that $K^{1},K^{2} $ are normalized rotation vector fields.  If
  \begin{itemize}
    \item $\lambda_1+\lambda_2\geq \delta H$ in $B_g(\bar{x},H(\bar{x})^{-1})$ where $\lambda_1, \lambda_2$ are the lowest two principal curvatures.
    \item $| \left<K^{i}, \nu \right>| H\leq \varepsilon$ in $B_g(\bar{x},H(\bar{x})^{-1})$  for $i=1,2$
    \item $| K^{i}| H\leq 5$ at $\bar{x}$ for $i=1,2$
  \end{itemize}
  Then

  \begin{align*}
    \min\left\{ \sup_{B_{LH(\bar{x})^{-1}}(\bar{x})}| K^{(1)}-K^{(2)}| H(\bar{x}),\sup_{B_{LH(\bar{x})^{-1}}(\bar{x})}| K^{(1)}+K^{(2)}| H(\bar{x})\right\} \leq C(L+1)\varepsilon
  \end{align*}
  for any $L\geq 1$.

\end{Lemma}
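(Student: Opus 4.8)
The plan is to run the same compactness-and-contradiction scheme as in the proof of Lemma \ref{NRVFDistanceCyl}, with $\Sigma\times\mathbb{R}$ (a piece of Bowl$^2\times\mathbb{R}$) playing the role of the round cylinder. As there, since $(K^{(1)}-K^{(2)})(x)$ is an affine function $Px+e$ of $x$, the estimate on $B_{LH(\bar x)^{-1}}(\bar x)$ for general $L$ reduces to the case $L=10$: one bounds the linear part $P$ by the supremum over the fixed ball (the supremum of $|Px+e|$ over a ball of radius $r$ is at least $r|P|$) and bounds the value at $\bar x$ trivially. Since $\Sigma\times\mathbb{R}$, the normalized rotational vector fields and the conclusion are all invariant under ambient isometries and under translation in the $\mathbb{R}$-factor, I would first place $\Sigma$ in the standard position for which Lemma \ref{symmetryrigidity} is stated (so that $K=\pm Jx$ is tangential to $\Sigma\times\mathbb{R}$) and translate so that the $\mathbb{R}$-coordinate of $q$ vanishes.

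\textbf{Localization.} The substitute for the compact cross-section $S^1$ of the cylinder case is the following observation: on $\Sigma\times\mathbb{R}$ one has $\lambda_1=0$ and $\lambda_1+\lambda_2=\kappa_{\min}^{\Sigma}\circ\pi$ (with $\pi$ the projection to $\Sigma$), and $\kappa_{\min}^{\Sigma}/H_{\Sigma}\to 0$ at infinity along $\Sigma$ (indeed like a negative power of the distance to the tip). Hence hypothesis (1), together with the $C^2$-closeness of $M$ to $\Sigma\times\mathbb{R}$, confines $q$ to a compact region $\mathcal{R}_\delta\subset\Sigma\times\mathbb{R}$ of the tip on which $H\ge c(\delta)>0$ and the geometry is uniformly controlled; this is what forces, and justifies, the $\delta$-dependence of $C$ and $\epsilon_{bowl}$. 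Now suppose the conclusion fails for $L=10$: there are $M_j$, $q_j\in\mathcal{R}_\delta$, points $\bar x_j\in M_j$, normalized rotational fields $K^{(i,j)}$ ($i=1,2$) and $\epsilon_j\to0$ satisfying the hypotheses with $\epsilon=\epsilon_j$ but with $Q_j:=\sup_{B_{10H(\bar x_j)^{-1}}(\bar x_j)}|K^{(1,j)}-K^{(2,j)}|$ obeying $Q_jH(\bar x_j)\ge j\epsilon_j$, and likewise for $K^{(1,j)}+K^{(2,j)}$. Passing to a subsequence, $q_j\to q_\infty\in\mathcal{R}_\delta$, $M_j\to\Sigma\times\mathbb{R}$ smoothly near $q_\infty$, $\bar x_j\to q_\infty$, and $H(\bar x_j)\to H(q_\infty)\in[c(\delta),1]$. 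Writing $K^{(i,j)}(x)=S_{(i,j)}JS_{(i,j)}^{-1}(x-b_{(i,j)})$ with $b_{(i,j)}\perp\ker(JS_{(i,j)}^{-1})$, hypothesis (3) together with the lower bound on $H(\bar x_j)$ and $\bar x_j\in\mathcal{R}_\delta$ bounds $|b_{(i,j)}|$ by some $C(\delta)$, so $K^{(i,j)}\to K^{(i,\infty)}$, and $\langle K^{(i,\infty)},\nu\rangle=0$ on an open subset of $\Sigma\times\mathbb{R}$; by Lemma \ref{symmetryrigidity}(2), $K^{(i,\infty)}=\pm Jx$. Replacing $K^{(2,j)}$ by $-K^{(2,j)}$ if necessary (which preserves all hypotheses and the violation of the conclusion), I may assume both limits equal $Jx$; replacing $S_{(i,j)}$ by $S_{(i,j)}S_{(i,\infty)}^{-1}$ (which changes neither $K^{(i,j)}$ nor $b_{(i,j)}$, since $S_{(i,\infty)}$ centralizes $J$), I may assume $S_{(i,\infty)}=\mathrm{Id}$, and then $b_{(i,j)}\to0$.

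\textbf{Blow-up of the difference.} This step is essentially verbatim as in Lemma \ref{NRVFDistanceCyl}. Put $S_j=S_{(1,j)}^{-1}S_{(2,j)}=\exp(A_j)$ with $A_j\to0$, and gauge-fix $(A_j)_{21}=(A_j)_{43}=0$ by multiplying $S_{(2,j)}$ on the right by an element of the centralizer of $J$ and invoking the implicit function theorem, so that $|A_j|\le C|[A_j,J]|$. With $P_j=S_{(1,j)}JS_{(1,j)}^{-1}-S_{(2,j)}JS_{(2,j)}^{-1}=-[A_j,J]+o(|A_j|)$ and $c_j=-S_{(1,j)}JS_{(1,j)}^{-1}(b_{(1,j)}-b_{(2,j)})$ one has $K^{(1,j)}-K^{(2,j)}=P_j(x-b_{(2,j)})+c_j$; affineness and $H(\bar x_j)\le1$ give $|P_j|\le Q_j$, hence $|A_j|\le CQ_j$, and evaluating the affine function at $b_{(2,j)}$ (which lies within $C(\delta)H(\bar x_j)^{-1}$ of $\bar x_j$) gives $|c_j|\le C(\delta)Q_j$. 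Passing to a subsequence, $A_j/Q_j\to A$ with $A_{21}=A_{43}=0$ and $c_j/Q_j\to c$; using $c_j/Q_j\in\ker(J'S_{(1,j)}^{-1})$ and $S_{(1,j)}\to\mathrm{Id}$ one gets $J'c=0$, i.e. $c=-Jb$. Hence $W^j:=(K^{(1,j)}-K^{(2,j)})/Q_j\to W^\infty(x):=[A,J]x-Jb$ uniformly on compact sets, and $W^\infty\not\equiv0$ since $\sup_{B_{10H(\bar x_j)^{-1}}(\bar x_j)}|W^j|=1$ and these balls converge to a fixed ball. Finally, hypothesis (2) and $Q_jH(\bar x_j)\ge j\epsilon_j$ give $|\langle W^j,\nu\rangle|\le C/j\to0$ on $B_{g_j}(\bar x_j,H(\bar x_j)^{-1})$ (using that $H$ is comparable to $H(\bar x_j)$ there), so $\langle W^\infty,\nu\rangle=0$ on an open subset of $\Sigma\times\mathbb{R}$; Lemma \ref{symmetryrigidity}(4) then forces $A=0$ and $b=0$, i.e. $W^\infty\equiv0$, a contradiction. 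The general-$L$ statement follows as at the end of the proof of Lemma \ref{NRVFDistanceCyl}.

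\textbf{Main obstacle.} The one substantive departure from Lemma \ref{NRVFDistanceCyl} is that $\Sigma\times\mathbb{R}$ has neither constant mean curvature nor homogeneity in the $\Sigma$-factor. One therefore has to use hypothesis (1) to trap the base point in a compact region of the tip — the source of the unavoidable $\delta$-dependence of the constants — and then check that on the relevant geodesic ball $H$ stays comparable to $H(\bar x_j)$ and that the Euclidean quantities $|b_{(i,j)}|$, $|c_j|$, $|P_j|$ are controlled by $C(\delta)Q_j$ rather than by universal multiples of $Q_j$. These are routine consequences of the $C^2$-closeness to $\Sigma\times\mathbb{R}$ and the curvature asymptotics of the Bowl, but they are what the argument genuinely hinges on; everything else transfers from the cylinder case with the rigidity inputs Lemma \ref{symmetryrigidity}(2),(4) replacing Lemma \ref{symmetryrigidity}(1),(3).
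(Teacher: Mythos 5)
Your proof is correct and follows essentially the same compactness-and-contradiction scheme as the paper's own proof: the paper normalizes by rescaling so that $H(q_j)=1$ and constrains the scale factor $\kappa_j$ via condition (1), while you keep $\Sigma$ fixed and use condition (1) together with the curvature asymptotics of the Bowl to trap $q_j$ in a compact region where $H\ge c(\delta)$, but these are equivalent parametrizations of the same compactness. The explicit blow-up of $W^j=(K^{(1,j)}-K^{(2,j)})/Q_j$, the gauge-fixing of $A_j$, the $\delta$-dependent bounds on $|b_{(i,j)}|$, $|c_j|$, $|P_j|$, and the final appeal to Lemma \ref{symmetryrigidity}(2),(4) all match the paper's argument (which at the corresponding point simply refers back to Lemma \ref{NRVFDistanceCyl}).
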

\begin{proof}

  First we assume that the tip of $\Sigma$ is the origin and the rotation axis  is $x_3$, also it enclose the positive part of $x_3$ axis.
  Argue by contradiction.
  Suppose that the conclusion is not true, then there exists a sequence of
   points $\bar{q}_j\in \Sigma\times\mathbb{R}$, scaling factors $\kappa_j$ such that $q_j = \kappa_j^{-1}\bar{q}_j $ has mean curvature 1 in hypersurface $\kappa_j^{-1}\Sigma\times\mathbb{R}$ (thus $\kappa_j = H_{\Sigma\times\mathbb{R}}(q_j)$) and a sequence of
  pointed hypersurfaces $(M_j,p_j)$ that are $1/j$ close to the geodesic ball $B_{\tilde{g}_j}(q_j,2)$ in $\kappa_j^{-1}\Sigma\times\mathbb{R}$, where $\tilde{g}_j$ denotes the metric on $\kappa_j^{-1}\Sigma\times\mathbb{R}$.
  Moreover $| p_j-q_j| \leq 1/j$. Without loss of generality we may also assume that $\left<q_j,\omega_4\right>=0$ where $\omega_4$ is a unit vector in the $\mathbb{R}$ direction.

  Further, there exists normalized rotation vector fields $K^{(i,j)}$, $i=1,2$ and $\varepsilon_j<1/j$  such that
   \begin{itemize}
     \item $|
     \left<K^{(i,j)},\nu\right>| H\leq \varepsilon_j$ in $B_{g_j}(p_j,H(p_j)^{-1})\subset M_j$
     \item  $| K^{(i,j)}| H\leq 5$ at $p_j$
   \end{itemize}
   for  $i=1,2$,
   \begin{itemize}
     \item \begin{align*}
             \min\left\{
             \begin{array}{c}
               \sup_{B_{10H(p_j)^{-1}}(p_j)}| K^{(1,j)}-K^{(2,j)}| H(p_j) \\
               \sup_{B_{10H(p_j)^{-1}}(p_j)}| K^{(1,j)}+K^{(2,j)}| H(p_j)
             \end{array}
             \right\}\geq j\varepsilon_j \\
           \end{align*}
   \end{itemize}

  Now the maximal mean curvature of $\kappa_j^{-1}\Sigma\times\mathbb{R}$ is $\kappa_j$.
  For any $j>2/\delta$,
  by condition $\lambda_1+\lambda_2\geq \delta H$ and approximation we know that $\frac{\lambda_1+\lambda_2}{H}\geq \delta-1/j\geq\frac{\delta}{2}$ around $q_j$.
  The asymptotic behaviour of the Bowl soliton indicates that $\frac{H(q_j)}{\kappa_j}<C(\delta)$ and $\kappa_j|
  q_j-\left<q_j,\omega_4\right>\omega_4| <C(\delta)$,
  thus $\kappa_j>C(\delta)^{-1}$ and $| q_j| =| q_j-\left<q_j,\omega_4\right>\omega_4| <C(\delta)$.

  We can then pass to a subsequence such that $q_j\rightarrow q_{\infty}$ and $\kappa_j\rightarrow\kappa_{\infty}>C(\delta)^{-1}>0$.
  Consequently $\kappa_j^{-1}\Sigma\times\mathbb{R}\rightarrow \kappa_{\infty}^{-1}\Sigma\times\mathbb{R}$ and  $B_{\tilde{g}}(q_j,2)\rightarrow B_{\tilde{g}_{\infty}}(q_{\infty},2)$ smoothly,
   where $B_{\tilde{g}_{\infty}}(q_{\infty},2)$ is the geodesic ball in $\kappa_{\infty}^{-1}\Sigma\times\mathbb{R}$.

  Combing with the assumption that $(M_j,p_j)$ is $1/j$  close to $(B_{\tilde{g}_j}(q_j,2),q_j)$ and $H(q_j)=1$, we have $M_j\rightarrow B_{\tilde{g}_{\infty}}(q_{\infty},2)$ with $p_j\rightarrow q_{\infty}$
  and $H(q_{\infty})=1$.

    We can write $K^{(i,j)}(x)=S_{(i,j)}JS_{(i,j)}^{-1}(x-b_{(i,j)})$ and assume that $(b_{(i,j)}-p_j)\perp \ker JS_{(i,j)}^{-1}$. Then
    $| b_{(i,j)}-p_j|
    =| S_{(i,j)}JS_{(i,j)}^{-1}(p_j-b_{(i,j)})| =| K^{(i,j)}(p_j)| \leq 5H(p_j)^{-1}\leq 5+10/j$ for large $j$. Then we can pass to a subsequence such that $S_{(i,j)}$ and $b_{(i,j)}$ converge to $S_{(i,\infty)}, b_{(i,\infty)}$ respectively.
    Consequently $K^{i,j}\rightarrow K^{(i,\infty)}$, the limit $K^{(i,\infty)}(x)=S_{(i,\infty)}JS_{(i,\infty)}^{-1}(x-b_{(i,\infty)})$ where $(b_{(i,\infty)}-p_{\infty})\perp \ker J$.

    Next, by the  hypersurface convergence and $| \left<K^{(i,j)},\nu\right>| \rightarrow 0$ we have $\left< K^{(i,\infty)},\nu \right>=0$ on $B_{\tilde{g}_{\infty}}(q_{\infty},2)$ .
  The  Lemma \ref{symmetryrigidity} then implies that $K^{(i,\infty)}=\pm Jx$.

    We only consider the case that $K^{(1,\infty)}=K^{(2,\infty)}= Jx$. The other cases follow by flipping the sign.

    Arguing as in Lemma \ref{NRVFDistanceCyl}, we may assume that $S_{(i,\infty)}=Id$. Now we have $Jb_{(i,\infty)}=0$ and $(b_{(i,\infty)}-p_{\infty})\perp\ker J$. Consequently, $b_{(i,\infty)}=p_{\infty}-Jp_{\infty}$, which is the orthogonal projection of $p_{\infty}$ onto $\ker J$.

     Recall that $p_{\infty}=q_{\infty}\in \kappa_{\infty}^{-1}\Sigma\times\mathbb{R}$ where the splitting direction is $x_4$ and the rotation axis of $\Sigma$ is $x_3$.
     Moreover $H(p_{\infty})=1$.  The structure of the Bowl soliton (see eg Appendix \ref{ode of bowl}) ensures that $Jp_{\infty}<2H(p_{\infty})^{-1}=2$, that means $B_{10H(p_j)^{-1}}(p_j)$ contains $B_1(b_{(i,j)})$.

    Now we can use exactly the same argument as in Lemma \ref{NRVFDistanceCyl} to reach an contradiction for $L=10$, hence the lemma is proved for $L=10$. Then the fact that $K^{(1)}-K^{(2)}$ is an affine vector field gives the result for all $L>1$.

 \end{proof}

The next step is to prove the symmetry improvement. Theorem \ref{Cylindrical improvement} will be used later to handle the neighborhood modeled on shrinking $S^1\times\mathbb{R}^2$, this is parallel to the Theorem 4.4 (Neck Improvement Theorem) in \cite{brendle2019uniqueness}.

\begin{Th}\label{Cylindrical improvement}
There exists  constant  $L_0>1$ and  $0<\varepsilon_0<1/10$ with the following properties: suppose that $M_t$ is  a mean curvature flow solution, if every point in the parabolic neighborhood $\hat{\mathcal{P}}(\bar{x},\bar{t},L_0,L_0^2)$ is $\varepsilon$ symmetric and
$(\varepsilon_0,100)$ cylindrical,
 where $0<\varepsilon\leq \varepsilon_0$, then $(\bar{x},\bar{t})$ is $\frac{\varepsilon}{2}$ symmetric.
\end{Th}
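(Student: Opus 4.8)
### Proof proposal for Theorem \ref{Cylindrical improvement}

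The plan is to follow the scheme of the Neck Improvement Theorem (Theorem 4.4 in \cite{brendle2019uniqueness}), adapted to the cylinder $S^1\times\mathbb{R}^2$ in place of the round neck. The starting observation is that on a parabolic neighborhood which is $(\epsilon_0,100)$ cylindrical, the rotational Killing field of the limit model $S^1\times\mathbb{R}^2$ is, up to the three rotations fixing the axis of each factor, essentially unique. Using the hypothesis that every point of $\hat{\mathcal{P}}(\bar x,\bar t,L_0,L_0^2)$ is $\epsilon$ symmetric, at each such point $(x,t)$ we obtain a normalized rotational field $K_{(x,t)}$ with $|\langle K_{(x,t)},\nu\rangle|H\le\epsilon$ and $|K_{(x,t)}|H\le 5$ on $\hat{\mathcal{P}}(x,t,100,100^2)$. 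The first step is to invoke Lemma \ref{NRVFDistanceCyl} to compare $K_{(x,t)}$ and $K_{(y,s)}$ for nearby space-time points: after possibly flipping a sign, on a ball of radius $LH^{-1}$ the two fields differ by at most $C(L+1)\epsilon$ in the natural scale-invariant norm. This lets us define, by a limiting/averaging procedure over the neighborhood, a single normalized rotational field $\bar K$ (a "best" candidate) such that $\bar K$ approximates $K_{(x,t)}$ at every point of a slightly smaller neighborhood with the same $O(\epsilon)$ bound, and such that the two conditions $|\langle \bar K,\nu\rangle|H\le C\epsilon$, $|\bar K|H\le 5+C\epsilon$ hold on a neighborhood of controlled size.

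The second and main step is the \emph{quantitative} improvement: one shows that the quantity $u=\langle \bar K,\nu\rangle H$ actually satisfies $|u|\le \epsilon/2$ on $\hat{\mathcal{P}}(\bar x,\bar t,100,100^2)$, provided $L_0$ was chosen large enough. The mechanism is that $\langle \bar K,\nu\rangle$ (since $\bar K$ is a Euclidean Killing field) is a solution of the Jacobi equation along the flow, $\left(\partial_t-\Delta\right)\langle \bar K,\nu\rangle=|A|^2\langle \bar K,\nu\rangle$; after normalizing by $H$ (which solves the same equation) and passing to the rescaled flow modelled on the shrinking cylinder, $u$ satisfies a linear parabolic equation whose spatial operator is, to leading order, the stability/Jacobi operator on $S^1\times\mathbb{R}^2$. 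The key analytic input is a spectral gap: on the cylinder, the low-lying eigenfunctions of that operator that are \emph{not} already accounted for by (i) the rotations $J,J'$ fixing the axes — which contribute genuine Killing fields and hence exact zero modes already subtracted into $\bar K$ — and (ii) translations/dilations — which are controlled because $u$ is measured against the varying $H$ — all decay. Concretely one expands $u$ in the $\mathbb{R}^2$-radial-and-Fourier modes; the dangerous modes are removed by the choice of $\bar K$ and by the freedom in the center $q$ and the rescaling, and the remaining modes are damped by a definite factor over the time interval $[\bar t - L_0^2 H^{-2},\bar t]$. Choosing $L_0$ large makes this damping beat the $O(\epsilon)$ errors coming from Lemma \ref{NRVFDistanceCyl} and from the $\epsilon_0$-closeness of $M$ to the model, yielding $|u|H\le\epsilon/2$. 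One then also checks $|\bar K|H\le 5$ on the required neighborhood, which follows since $|\bar K|H$ is close to its value on the model cylinder (where the rotation field has norm $\le 1+\epsilon_0$ on the relevant region) plus the controlled center shift; shrinking $\epsilon_0$ absorbs the slack.

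The final step is bookkeeping: unwinding Definition \ref{epsilonsymmetrydef}, the field $\bar K$ together with the bounds $|\langle\bar K,\nu\rangle|H\le\epsilon/2$ and $|\bar K|H\le 5$ on $\hat{\mathcal{P}}(\bar x,\bar t,100,100^2)$ is exactly the statement that $(\bar x,\bar t)$ is $\frac\epsilon2$ symmetric.

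The step I expect to be the main obstacle is the spectral/ODE analysis in step two: one must carefully identify which modes of the Jacobi operator on $S^1\times\mathbb{R}^2$ are the "bad" non-decaying ones, verify that \emph{exactly} those are killed by the two gauge freedoms available (the choice of the rotational field $\bar K$ among its rotated copies, and the freedom of the spacetime center and rescaling implicit in the definition of $(\epsilon_0,100)$ cylindrical), and then quantify the decay of the rest uniformly — including controlling the coupling to the inhomogeneous error terms, which are only $O(\epsilon)$ in a weighted norm and could in principle feed back into the bad modes through the $O(\epsilon_0)$ deviation of $M$ from the model. This is the technical heart, and it is where the argument parallels but must genuinely extend the neck case of \cite{brendle2019uniqueness}, since the noncompact $\mathbb{R}^2$ factor and the extra rotation $J'$ change the mode structure.
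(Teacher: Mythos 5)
Your overall strategy is the paper's: compare the local rotation fields via Lemma \ref{NRVFDistanceCyl}, reduce to a single candidate, pass to the Jacobi equation for $\langle K,\nu\rangle$ on the shrinking cylinder, decompose into Fourier modes in the $S^1$ direction, and beat the error via a spectral/decay mechanism on the large parabolic neighborhood. That matches the paper closely. However, there are three specific points where your description, taken literally, would not go through.

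First, you assert that the quantity $u=\langle\bar K,\nu\rangle H$ for the \emph{same} initially chosen $\bar K$ is shown to satisfy $|u|\le\epsilon/2$. That is not what happens and cannot happen: the $m=1$ Fourier mode of $u$ contains a component of the form $(a_0+a_1z_1+a_2z_2)\cos\theta+(b_0+b_1z_1+b_2z_2)\sin\theta$ which is a \emph{neutral} mode of the Jacobi operator and does not decay at all. The paper's Steps 5 and 7 instead bound the second $z$-derivatives of the $m=1$ coefficients (showing the $m=1$ part is well-approximated by an affine-in-$z$ function whose coefficients are $O(\epsilon)$) and then constructs a \emph{new} normalized rotational field $\tilde K\ne\bar K$ whose axis is translated and tilted precisely so that $\langle\tilde K-\bar K,\nu\rangle$ cancels that affine piece. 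The improvement is realized by $\tilde K$, not by $\bar K$. Your proposal conflates ``choose the best candidate first'' with ``subtract what the analysis reveals afterward,'' and as written the first does not work on the scale $100$ when the candidate is only optimized (or merely picked) on the scale $L_0$.

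Second, you attribute control of the axisymmetric ($m=0$) mode to ``translations/dilations, which are controlled because $u$ is measured against the varying $H$.'' That is not the mechanism. The paper's Step 6 kills $m=0$ by a purely geometric identity: writing $M_t$ as a radial graph $r(\theta,z)$, the exact formula for $\nu$ gives $\int_0^{2\pi}u\,\sqrt{1+r^{-2}r_\theta^2+\dots}\,d\theta=\int_0^{2\pi}(-r_\theta)\,d\theta=0$, from which $|\tilde u_0|\le C(L_0)\epsilon_0\epsilon$ follows. This is a zero-mean cancellation coming from the rotational structure of $K$ itself, not from normalizing by $H$.

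Third, the noncompact $\mathbb{R}^2$ factor is not handled by a ``radial-and-Fourier'' mode decomposition in $z$; the paper keeps the $z$-variables continuous and uses the Dirichlet heat kernel of the large box $\Omega_{L_0/4}$, with the estimate
$\int_{\partial\Omega_{L_0/4}}|\partial_{\nu_y}K_{t-\tau}|\,dy\le \frac{CL_0^2}{(t-\tau)^2}e^{-L_0^2/(1000(t-\tau))}$,
so the boundary contribution (which is only $O(\epsilon)$, not $O(\epsilon\epsilon_0)$) is suppressed by choosing $L_0$ large. This domain-enlargement/exponential-suppression argument is what makes the noncompact directions harmless, and it is a genuinely different device than a spectral gap in $z$.
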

\begin{proof}
   We assume that $L_0$ is large and $\varepsilon_0$ is small depending on $L_0$. The constant $C$ without specified dependence means a universal constant.

  For any space-time point $(y,s)\in \hat{\mathcal{P}}(\bar{x},\bar{t},L_0,L_0^2)$, by assumption there is a normalized rotation field $K^{(y,s)}$ such that
   $| \left<K^{(y,s)},\nu\right>| H\leq \varepsilon$ and $| K^{(y,s)}| H\leq 5$ in a parabolic neighbourhood $\hat{\mathcal{P}}(y,s,100,100^2)$.

   Without loss of generality we may assume $\bar{t}=-1$, $H(\bar{x})=\frac{1}{\sqrt{2}}$ and $| \bar{x}-(\sqrt{2},0,0,0)| \leq \varepsilon_0$. Let $K_0=Jx$ and $\bar{K}=K^{(\bar{x},-1)}$.

   By the cylindrical assumption,
    for each $(y,s)\in\hat{\mathcal{P}}(\bar{x},\bar{t},L_0,L_0^2)$,
     the parabolic neighborhood $\hat{\mathcal{P}}(y,s,100^2,100^2)$ is $C(L_0)\varepsilon_0$ close to the shrinking cylinder
   $S^1_{\sqrt{-2t}}\times \mathbb{R}^2$ in $C^{10}$ norm.

    We  use the polar coordinate $(r\cos\theta,r\sin\theta,z_1,z_2)$:
    $M_t$ is then a radial graph over $z_1z_2$-plane.
    More precisely:
    the set \[\left\{(r\cos\theta,r\sin\theta,z_1,z_2)\big\lvert
    r=r(\theta,z_1,z_2), z_1^2+z_2^2\leq \frac{L_0^2}{2},  \theta\in [0,2\pi]\right\}\] is contained in $M_t$. \\

   \noindent\textbf{Step 1:} Given any $(y,s)$ and $(y', s')$ in $\hat{\mathcal{P}}(\bar{x},-1,L_0,L_0^2)$, we show that $| K^{(y,s)}| H<5$ in $\hat{\mathcal{P}}(y',s',100,100^2)$. Also without loss of generality we can choose $\bar{K}=K_0$.

   Since $\left<K_0,\nu\right>=0$ and $| K_0| H=1$ on $S^1_{\sqrt{-2t}}\times\mathbb{R}$, by approximation we know that $|
   \left<K_0,\nu\right>| H<C(L_0)\varepsilon_0$ and $| K_0| H<1+C(L_0)\varepsilon_0$ in  parabolic neighborhood $\hat{\mathcal{P}}(y,s,100,100^2)$.

    Taking $\varepsilon_0$ small enough such that $\varepsilon_0C(L_0)<\varepsilon_{cyl}$. Then we can apply Lemma \ref{NRVFDistanceCyl} to obtain:
   \begin{align}\label{Kdiffrough}
    \min\left\{ \sup_{B_{10^3L_0}(\bar{x})}| K_0-K^{(y,s)}| H(\bar{x}),\sup_{B_{10^3L_0}(\bar{x})}| K_0+K^{(y,s)}| H(\bar{x})\right\} \leq CL_0\varepsilon_0
  \end{align}
  By the structure of the shrinking cylinder and approximation, each time slice of $\hat{\mathcal{P}}(y',s',100,100^2)$ is contained in $B_{10^3L_0}(\bar{x})$, then we have $| K^{(y,s)}| H<1+C(L_0)\varepsilon_0<5$ in $\hat{\mathcal{P}}(y',s',100,100^2)$ by taking $\varepsilon_0$ small enough.

   Applying (\ref{Kdiffrough})  to $\bar{K}$, then without loss of generality we assume that $\sup_{B_{10^3L_0}(\bar{x})}| K_0-\bar{K}|
   H(\bar{x})<CL_0\varepsilon_0$. This  means that $\bar{K}=SJS^{-1}(x-q)$ with some $S\in SO(4)$ and $q\in\mathbb{R}^4$ such that $| S-Id| +| q| <C(L_0)\varepsilon_0$.
    Now we rotate $M_t$ by $S$, then $S^1_{\sqrt{-2t}}\times \mathbb{R}^2$ is still a $C(L_0)\varepsilon_0$ approximation of $M_t$, but $\bar{K}$ becomes $Jx$.\\

   \noindent\textbf{Step 2:} We show that, for each point $(y,s)\in \hat{\mathcal{P}}(\bar{x},\bar{t},L_0,L_0^2)$, there are constants $a_i,b_i$, $i=0,1,2$ with  $| a_i|
   +| b_i| \leq C(L_0)\varepsilon$ such that
   \begin{align}\label{finetuneK}
      | \left<\bar{K},\nu\right>-(a_0+a_1z_1+a_2z_2)\cos\theta -(b_0+b_1z_1+b_2z_2)\sin\theta| \leq C(L_0)\varepsilon\varepsilon_0+C\varepsilon(-t)^{1/2}
   \end{align}
   in $\hat{\mathcal{P}}(y,s,100,100^2)$

   First, we can find a sequence of points
    $(x_i,t_i)\in \hat{\mathcal{P}}(\bar{x},\bar{t},L_0,L_0^2)$, $i=0,1,...,N$ with $N\leq C(L_0)$ such that $(x_0,t_0)=(\bar{x},\bar{t})$, $(x_N,t_N)=(y,s)$ and
     $(x_{i+1},t_{i+1})\in\hat{\mathcal{P}}(x_i,t_i,1,1)$.
     By the structure of the shrinking cylinder and approximation, the time $t_{i+1}$ slice of $\hat{\mathcal{P}}(x_i,t_i,100,100^2) $ contains $ B_{g_{t_{i+1}}}(x_{i+1},{H(x_{i+1})^{-1}}) $.
     Now we are ready to apply Lemma \ref{NRVFDistanceCyl} to obtain that
     \begin{align*}
    \min\left\{ \sup_{B_{10^3L_0}(\bar{x})}| K^{(x_i,t_i)}-K^{(x_{i+1},t_{i+1})}| ,\sup_{B_{10^3L_0}(\bar{x})}| K^{(x_i,t_i)}+K^{(x_{i+1},t_{i+1})}| \right\} \leq C(L_0)\varepsilon
  \end{align*}
   Summing up the above inequality for $i=0,1,...,N-1$ we get
   \begin{align*}
   \min\left\{ \sup_{B_{10^3L_0}(\bar{x})}|
   \bar{K}-K^{(y,s)}| ,\sup_{B_{10^3L_0}(\bar{x})}| \bar{K}+K^{(y,s)}| \right\} \leq C(L_0)\varepsilon
  \end{align*}
  Note that each time slice of $\hat{\mathcal{P}}(y,s,100,100^2)$ is contained in $B_{10^3L_0}(\bar{x})$. So without loss of generality we may assume that
  \[| \bar{K}-K^{(y,s)}| \leq C(L_0)\varepsilon\] in $\hat{\mathcal{P}}(y,s,100,100^2)$.

 Then we can choose a matrix $S\in SO(4)$ and $q\in \mathbb{R}^4$ with $| S-Id| +| q| \leq C(L_0)\varepsilon$ such that
     $K^{(y,s)}=SJS^{-1}(x-q)$.
     By a simple computation we will find the constant $a_i,b_i$, $i=0,1,2$ such that

      $\left<\bar{K}-K^{(y,s)},\nu \right>=(a_0+a_1z_1+a_2z_2)\cos\theta+(b_0+b_1z_1+b_2z_2)\sin\theta$ \ \
      on $S^1_{\sqrt{-2t}}\times \mathbb{R}^2$
      and $| a_i| ,| b_i| $ are bounded by $C(L_0)\varepsilon$ .

     By approximation we have
     \[| \left<\bar{K}-K^{(y,s)},\nu \right> -(a_0+a_1z_1+a_2z_2)\cos\theta-(b_0+b_1z_1+b_2z_2)\sin\theta| \leq C(L_0)\varepsilon\varepsilon_0\] in
     $\hat{\mathcal{P}}(y,s,100,100^2)$.
     In fact, since $\bar{K}-K^{(y,s)}$ changes linearly at a rate $\leq C(L_0)\varepsilon$, in the  approximation process the $\bar{K}-K^{(y,s)} $ term will produce an error of at most $C(L_0)\varepsilon_0\varepsilon$ , while $\nu$ will have an error up to $C(L_0)\varepsilon_0$. Combining them up we have the desired the error term.

     Since $| \left<K^{(y,s)},\nu\right>| \leq C\varepsilon H^{-1}<C\varepsilon(-t)^{1/2}$, we eventually obtain:
     \begin{align*}
      | \left<\bar{K},\nu\right>-(a_0+a_1z_1+a_2z_2)\cos\theta -(b_0+b_1z_1+b_2z_2)\sin\theta| \leq C(L_0)\varepsilon\varepsilon_0+C\varepsilon(-t)^{1/2}
   \end{align*}
    in
     $\hat{\mathcal{P}}(y,s,100,100^2)$.

     Note that $a_i, b_i$  depend on the choice of $(y,s)$.

     An easy consequence is that $| \left<\bar{K},\nu\right>| \leq C(L_0)\varepsilon$ in $\hat{\mathcal{P}}(\bar{x},-1,L_0,L_0^2)$.\\

     \noindent\textbf{Step 3:}
     Set $u=\left<\bar{K},\nu\right>$, we have the following equation in the polar coordinate:
     \begin{align}\label{heateqnforu}
       | \frac{\partial u}{\partial t}-(\frac{\partial^2 u}{\partial z_1^2}+\frac{\partial^2 u}{\partial z_2^2}+\frac{1}{-2t}\frac{\partial^2 u}{\partial \theta^2}+\frac{1}{-2t}u)| \leq C(L_0)\varepsilon\varepsilon_0
     \end{align}
     for $z_1^2+z_2^2\leq \frac{L_0^2}{4}$ and $-\frac{L_0^2}{4}\leq t\leq -1$

     First,  $u$ satisfies the parabolic Jacobi equation
     \begin{align}\label{jacobieqn}
       \partial_t u=\Delta_{M_t} u+| A| ^2 u
     \end{align}
     in $\hat{\mathcal{P}}(\bar{x},-1,L_0,L_0^2)$.

     On $S^1_{\sqrt{-2t}}\times \mathbb{R}^2$,  this equation is  (\ref{heateqnforu}) with right hand side being exactly 0.

     Now we work on $\left\{z_1^2+z_2^2\leq \frac{L_0^2}{2}, -\frac{L_0^2}{2}\leq t\leq -1, , 0\leq\theta\leq2\pi\right\}$ , which is contained in $\hat{\mathcal{P}}(\bar{x},-1,L_0,L_0^2)$ under polar coordinate.
     By approximation the coefficient error is $C(L_0)\varepsilon_0$.
     Step 1 gives that $| u| \leq C(L_0)\varepsilon$, then the parabolic interior  gradient estimate gives that $| \nabla u|
     +| \nabla^2 u| \leq C(L_0)\varepsilon$ for $z_1^2+z_2^2\leq \frac{L_0^2}{4}$ and $-\frac{L_0^2}{4}\leq t \leq -1$.

     Then
     \begin{align*}
       &| \frac{\partial u}{\partial t}-\frac{\partial^2 u}{\partial z_1^2}-\frac{\partial^2 u}{\partial z_2^2}-\frac{1}{-2t}\frac{\partial^2 u}{\partial \theta^2}-\frac{1}{-2t}u| \\
       &\leq C(L_0)\varepsilon_0| D^2u|
       +C(L_0)\varepsilon_0| u|  \\
       &\leq C(L_0)\varepsilon_0\varepsilon
     \end{align*}
      which justifies (\ref{heateqnforu}) \\

      \noindent\textbf{Step 4:}

      We deal with the (nonhomogenous) linear equation (\ref{heateqnforu}).

      Define

      $\Omega_l(\bar{z}_1,\bar{z}_2) =\{(z_1,z_2)\in\mathbb{R}^2|  \ | z_i-\bar{z}_i|
      \leq l\}$

      $\Omega_l=\Omega_l(0, 0)$

       $\Gamma_l=\{(z_1,z_2)\in \Omega_l,\theta\in [0,2\pi],t\in [-l^2,-1] \big\lvert  t=-l^2 \text{ or } | z_1| =l \text{ or } | z_2| =l\}$

      Let $\tilde{u}$ solves
       \begin{align*}
         \frac{\partial \tilde{u}}{\partial t}-\frac{\partial^2 \tilde{u}}{\partial z_1^2}-\frac{\partial^2 \tilde{u}}{\partial z_2^2}-\frac{1}{-2t}\frac{\partial^2 \tilde{u}}{\partial \theta^2}-\frac{1}{-2t}\tilde{u}=0
       \end{align*}
       in $\Omega_{L_0/4}\times [0,2\pi]\times [-L_0^2/16,-1]$ and satisfies the boundary condition $\tilde{u}=u$ on $\Gamma_{L_0/4}$.
       By (\ref{heateqnforu}) and maximum principle
       \begin{align}\label{diff tildeu and u}
         | u-\tilde{u}| \leq C(L_0)\varepsilon\varepsilon_0
       \end{align}

       Next, we will compute Fourier coefficients of $\tilde{u}$  and analysis each of them.  Let
       \begin{align*}
         \tilde{u}_m(z,t)=\frac{1}{\pi}\int_{0}^{2\pi}\tilde{u}(z,t,\theta)\cos(m\theta)d\theta, \ \  \tilde{v}_m(z,t)=\frac{1}{\pi}\int_{0}^{2\pi}\tilde{v}(z,t,\theta)\sin(m\theta)d\theta \\
       \end{align*}
        where $z=(z_1,z_2)$.

       For any $(\bar{z}_1,\bar{z}_2,\theta_0,t_0)\in\Omega_{L_0/4}\times [0,2\pi]\times [-L_0^2/16,-1]$,
        by (\ref{finetuneK}) and (\ref{diff tildeu and u}), we have:
        \begin{align}\label{refinedtuneK}
          | \tilde{u}-(a_0+a_1z_1+a_2z_2)\cos\theta -(b_0+b_1z_1+b_2z_2)\sin\theta| \leq C(L_0)\varepsilon\varepsilon_0+C\varepsilon(-t)^{1/2}
        \end{align} in
       $\Omega_{(-t_0)^{\frac{1}{2}}}(\bar{z}_1,\bar{z}_2)\times[0,2\pi]\times[2t_0,t_0]$.

       In particular, for all $m\geq 2$,
       we have
       \begin{align*}
         | \tilde{u}_m| +| \tilde{v}_m| \leq (C(L_0)\varepsilon\varepsilon_0+C\varepsilon)(-t)^{1/2}
       \end{align*}
        in $\Omega_{L_0/4}\times[-L_0^2/16,-1]$.

       Let $\hat{u}_m=\tilde{u}_m(-t)^{\frac{1-m^2}{2}}$ and $\hat{v}_m=\tilde{v}_m(-t)^{\frac{1-m^2}{2}}$.
       Then $\hat{u}, \hat{v}$ satisfies the linear heat equation
       \begin{align*}
         \frac{\partial \hat{u}}{\partial t}=\frac{\partial^2 \hat{u}}{\partial z_1^2}+\frac{\partial^2 \hat{u}}{\partial z_2^2}
       \end{align*}
       and $| \hat{u}_m| +| \hat{v}_m| \leq (C(L)\varepsilon\varepsilon_0+C\varepsilon)(-t)^{1-\frac{m^2}{2}}$ in $\Omega_{L_0/4}\times [-L_0^2/16,-1]    $

       The heat kernel with Dirichlet Boundary for $\Omega_{L_0/4}$ is
       \begin{align*}
          K_t&(x,y)=-\frac{1}{4\pi t}\sum_{\delta_i\in \{\pm 1\}, k_i\in \mathbb{Z}} (-1)^{-(\delta_1+\delta_2)/2}\cdot\\ &\exp\left({-\frac{\left\lvert (x_1,x_2)-(\delta_1 y_1, \delta_2 y_2)-(1-\delta_1,1-\delta_2)\frac{L_0}{4}+(4k_1,4k_2)\frac{L_0}{4}\right\rvert ^2}{4t}}\right)
       \end{align*}
       where $x=(x_1,x_2), y=(y_1, y_2)$ are in $\Omega_{L_0/4}$.

       $K_t$ satisfies
       \begin{enumerate}
         \item $\partial_tK_t=\Delta_x K_t$ in  $\Omega_{L_0/4}\times (0,\infty) $
         \item $K_t$ is symmetric in $x$ and $y$
         \item $\lim_{t\rightarrow 0}K_t(x,y)=\delta(x-y)$ in the distribution sense.
         \item $K_t(x,y)=0$ \ for $y\in\partial\Omega_{L_0/4}$ and $t>0$
       \end{enumerate}
       The solution formula is (we demonstrate it for $\hat{u}_m$, the same for $\hat{v}_m$)
       \begin{align*}
         \hat{u}_m(x,t)= &\int_{\Omega_{L_0/4}}K_{t+L_0^2/16}(x,y)\hat{u}_m(y,-\frac{L_0^2}{16})dy \\ -&\int_{-L_0^2/16}^{t}\int_{\partial\Omega_{L_0/4}}\partial_{\nu_y}K_{t-\tau}(x,y)\hat{u}_m(y,\tau)dy \ d\tau
       \end{align*}

       Now we let $(x,t)\in \Omega_{L_0/100}\times [-L_0^2/100^2,-1]$ ($L_0$ is large enough) and $-L_0^2/16\leq\tau<t$.
       For all such $(x,t)$ and $\tau$ we have the following heat kernel estimate: 

         \begin{align}\label{hkest1}
           \int_{\Omega_{L_0/4}}| K_{t+L_0^2/16}(x,y)| dy\leq C
         \end{align}

        \begin{align}\label{hkest2}
          \int_{\partial{\Omega_{L_0/4}}}| \partial_{\nu_y}K_{t-\tau}(x,y)| dy\leq \frac{CL_0^2}{(t-\tau)^2}e^{-\frac{L_0^2}{1000(t-\tau)}}
        \end{align}

        (\ref{hkest1}) is a standard result that the integration of the Dirichlet heat kernel on an open domain is  no more than 1 at each positive time.

       A brief explanation of (\ref{hkest2}) is in the Appendix \ref{appendix heat kernel}.

       Now we can estimate $\hat{u}_m(\bar{x},\bar{t})$ for $(\bar{x},\bar{t})\in \Omega_{L_0/100}\times [-\frac{L_0^2}{100^2},-1]$.
       \begin{align*}
         | \hat{u}_m(x,t)| &\leq (C(L_0)\varepsilon_0\varepsilon+C\varepsilon)\Big(\frac{L_0^2}{16}\Big)^{1-\frac{m^2}{2}} \\
         &+(C(L_0)\varepsilon_0\varepsilon+C\varepsilon)\int_{-L_0^2/16}^{t}\frac{CL_0^2}{(t-\tau)^2}e^{-\frac{L_0^2}{1000(t-\tau)}}(-\tau)^{1-\frac{m^2}{2}}d\tau\\
       \end{align*}

       For $L_0$ large and $-200^2\leq t\leq -1$
       \begin{align*}
         \frac{CL_0^2}{(t-\tau)^2}e^{-\frac{L_0^2}{1000(t-\tau)}}(-\tau)^{\frac{1}{2}}\leq CL_0^{-1}
       \end{align*}
       whenever $\tau<t$.

       Therefore
       \begin{align}\label{Heat Equation estimate}
         | \hat{u}_m(x,t)| &\leq (C(L_0)\varepsilon_0\varepsilon+C\varepsilon)\Big(\frac{L_0^2}{16}\Big)^{1-\frac{m^2}{2}} \\
         &+(C(L_0)\varepsilon_0\varepsilon+C\varepsilon)\int_{-L_0^2/16}^{t}CL_0^{-1}(-\tau)^{\frac{1-m^2}{2}}d\tau \notag\\
         &\leq(C(L_0)\varepsilon_0\varepsilon+C\varepsilon)\Big[\Big(\frac{L_0^2}{16}\Big)^{1-\frac{m^2}{2}} +L_0^{-1}\frac{4}{m^2-3}(-t)^{\frac{3-m^2}{2}}  \Big] \notag
       \end{align}
       The same estimate holds for $\hat{v}_m$.

       Choose $L_0$ large enough such that $[-200,200]^2\subset\Omega_{L_0/100}$.

       Summing up (\ref{Heat Equation estimate}) over $m\geq 2$, remembering that
       \begin{align*}
         | \tilde{u}_m|  & = | \hat{u}_m|  (-t)^{\frac{m^2-1}{2}} \\
         | \tilde{v}_m|  & = | \hat{v}_m|  (-t)^{\frac{m^2-1}{2}}
       \end{align*}
       we then conclude that for $(x,t)\in\Omega_{200}\times[-200^2,-1]$,
       \begin{align}\label{m >= 2 mode conclusion}
         \sum_{m=2}^{\infty}| \tilde{u}_m| +|
         \tilde{v}_m| \leq & \sum_{m=2}^{\infty}(C(L_0)\varepsilon_0\varepsilon+C\varepsilon)\Big[\Big(\frac{L_0^2}{16(-t)}\Big)^{\frac{2-m^2}{2}} +L_0^{-1}\frac{4}{m^2-3}(-t)^{\frac{1}{2}}\Big]\\
         \leq & C(L_0)\varepsilon_0\varepsilon+CL_0^{-1}\varepsilon \notag
       \end{align}

       \noindent\textbf{Step 5:} In the next we consider the case that $m=1$.
       $\tilde{u}_1,\tilde{v}_1$ satisfies the linear heat equation
       \begin{align*}
         \frac{\partial \tilde{u}}{\partial t}=\frac{\partial^2 \tilde{u}}{\partial z_1^2}+\frac{\partial^2 \tilde{u}}{\partial z_2^2}  \ \  \text{  in } \Omega_{L_0/4}\times [-L_0^2/16,-1]
       \end{align*}
       For each $(X_0,t_0)=(x_1,x_2,t_0)\in\Omega_{L_0/4} \times [-L_0^2/16,-1]$,  taking the Fourier coefficient of $\cos\theta, \sin\theta$ of (\ref{refinedtuneK}), we can find $a_i, b_i, i=0,1,2$ satisfying $| a_i|
       +| b_i| \leq C(L_0)\varepsilon$ and
        \[| \tilde{u}_1-(a_0+a_1z_1+a_2z_2)|
        +| \tilde{v}_1-(b_0+b_1z_1+b_2z_2)| \leq (C(L_0)\varepsilon\varepsilon_0+C\varepsilon)(-t)^{1/2}\]
       in $\Omega_{(-t_0)^{\frac{1}{2}}}(x_1,x_2)\times[0,2\pi]\times[2t_0,t_0]$.

       The linear term satisfies the heat equation trivially, so the parabolic interior gradient estimate implies that $|
       \frac{\partial^2 \tilde{u}_1}{\partial z_i\partial z_j}| +| \frac{\partial^2 \tilde{v}_1}{\partial z_i \partial z_j}| \leq(C(L_0)\varepsilon\varepsilon_0+C\varepsilon)(-t)^{-1/2}$ for each pair of $1\leq i,j\leq2$ in  $\Omega_{L_0/8}\times[-L_0^2/64,-1]$.

       Note that $\frac{\partial^2 \tilde{u}_1}{\partial z_i\partial z_j}$ and $\frac{\partial^2 \tilde{v}_1}{\partial z_i\partial z_j}$ also satisfies the heat equation in $\mathbb{R}^2$,
       we can apply the same argument as in the case of $m\geq 2$  to $\frac{\partial^2 \tilde{u}_1}{\partial z_i\partial z_j}$ and $\frac{\partial^2 \tilde{v}_1}{\partial z_i\partial z_j}$ respectively (i.e. express them as the integral of their boundary data using the heat kernel) to obtain that:
         \begin{align*}
         |
         \frac{\partial^2 \tilde{u}_1}{\partial z_i\partial z_j}| &\leq (C(L_0)\varepsilon_0\varepsilon+C\varepsilon)\Big(\frac{L_0^2}{16}\Big)^{-\frac{1}{2}} \\
         &+(C(L_0)\varepsilon_0\varepsilon+C\varepsilon)\int_{-L_0^2/16}^{t}\frac{CL_0^2}{(t-\tau)^2}e^{-\frac{L_0^2}{1000(t-\tau)}}(-\tau)^{-\frac{1}{2}}d\tau\\
       \end{align*}
       for each $1\leq i, j\leq 2$. Note that the same inequality holds for $\frac{\partial^2 \tilde{v}_1}{\partial z_i\partial z_j}$.

       For $L_0$ large, $-200^2\leq t\leq -1$.
       \begin{align*}
         \frac{CL_0^2}{(t-\tau)^2}e^{-\frac{L_0^2}{1000(t-\tau)}}\leq CL_0^{-2}
       \end{align*}
       whenever $\tau<t$.

       Then we have:

       \begin{align}\label{mode1estimate}
         \sum_{i,j=1,2}| \frac{\partial^2 \tilde{u}_1}{\partial z_i\partial z_j}| +|
         \frac{\partial^2 \tilde{v}_1}{\partial z_i\partial z_j}| \leq C(L_0)\varepsilon_0\varepsilon+CL_0^{-1}\varepsilon
       \end{align}
      for $(x,t)\in\Omega_{200}\times[-200^2,-1]$.
      This means we can find real numbers $A_i, B_i$, $i=0,1,2$ such that
      \begin{align}\label{mode1conclusion}
        | \tilde{u_1}-(A_0+A_1z_1+A_2z_2)| +|
        \tilde{v_1}-(B_0+B_1z_1+B_2z_2)| \leq C(L_0)\varepsilon_0\varepsilon+CL_0^{-1}\varepsilon
      \end{align}
      and $| A_i| +|
      B_i| \leq C(L_0)\varepsilon$. \\

      \noindent\textbf{Step 6:} Finally we deal with the case that $m=0$. Using the polar coordinate on $M_t$, the normal vector $\nu$ satisfies:
      \[\nu\sqrt{1+r^{-2}(\frac{\partial r}{\partial \theta})^2+(\frac{\partial u}{\partial z_1})^2+(\frac{\partial u}{\partial z_2})^2}=(\cos\theta,\sin\theta,-\frac{\partial r}{\partial z_1},-\frac{\partial r}{\partial z_2})+(\frac{r_\theta}{r}\sin\theta, -\frac{r_\theta}{r}\cos\theta, 0, 0)\]
      Notice that $K=(-r\sin\theta, r\cos\theta, 0 , 0)$, thus
      \begin{align*}
        \int_{0}^{2\pi}u\sqrt{1+r^{-2}(\frac{\partial r}{\partial \theta})^2+(\frac{\partial u}{\partial z_1})^2+(\frac{\partial u}{\partial z_2})^2}d\theta=\int_{0}^{2\pi}-\frac{dr}{d\theta}d\theta=0
      \end{align*}
      Since $| u|
      \leq C(L_0)\varepsilon$ and $| \frac{\partial r}{\partial\theta}| +| \frac{\partial u}{\partial z_1}|
      +| \frac{\partial u}{\partial z_2}| \leq C\varepsilon_0$, moreover $r>\frac{1}{\sqrt{2}}-C(L_0)\varepsilon_0>\frac{1}{2}$, we conclude that
      \begin{align}\label{mode0estimate}
        \left\lvert \int_{0}^{2\pi}u(z_1,z_2,\theta)d\theta\right\rvert \leq C(L_0)\varepsilon_0\varepsilon
      \end{align}
      in $\Omega_{200}\times[-200^2,-1]$.

      Considering (\ref{diff tildeu and u}), we have $| \tilde{u}_0|
      \leq C(L_0)\varepsilon_0\varepsilon$ in $\Omega_{200}\times[-200^2,-1]$. Finally note that $\tilde{v}_0=0$ trivially.\\

      \noindent\textbf{Step 7: }Combing the analysis for all the $\tilde{u}_i$
      (\ref{m >= 2 mode conclusion}) (\ref{mode1conclusion}) (\ref{mode0estimate}) (and the same hold for $\tilde{v}_i$), we conclude that there exists real numbers $A_i, B_i$, $i=0,1,2$ such that
      \begin{align*}
        | \tilde{u}-(A_0+A_1z_1+A_2z_2)\cos\theta| +| \tilde{v}-(B_0+B_1z_1+B_2z_2)\sin\theta| \leq C(L_0)\varepsilon_0\varepsilon+CL_0^{-1}\varepsilon
      \end{align*}
      in $\Omega_{200}\times[-200^2,-1]$
      and $| A_i| +| B_i| \leq C(L_0)\varepsilon$.

      Considering the fact that $| u-\tilde{u}| \leq C(L_0)\varepsilon_0\varepsilon$, we have
      \begin{align*}
        | u-(A_0+A_1z_1+A_2z_2)\cos\theta| +| v-(B_0+B_1z_1+B_2z_2)\sin\theta| \leq C(L_0)\varepsilon_0\varepsilon+CL_0^{-1}\varepsilon
      \end{align*}
      in $\Omega_{200}\times[0,2\pi]\times[-200^2,-1]$.

      So there exists another normalized rotation vector field $\tilde{K}$ such that $| \left<\tilde{K},\nu\right>| \leq C(L_0)\varepsilon_0\varepsilon+CL_0^{-1}\varepsilon$
      in $\hat{\mathcal{P}}(\bar{x},-1,100,100^2)$.
      Also $H\leq 2$ in this parabolic neighbourhood.
      Note that in the step 1 we actually get $| \bar{K}| H\leq 1+C(L_0)\varepsilon_0$, hence $| \tilde{K}| H\leq 1+C(L_0)(\varepsilon+\varepsilon_0)$.

      Now we first choose $L_0$ large enough such that $CL_0^{-1}<1/4$ and then choose $\varepsilon_0$ small enough depending on $L_0$ such that $C(L_0)\varepsilon_0<1/4$.
      Then $(\bar{x},-1)$ is $\varepsilon/2$ symmetric.

\end{proof}

Unlike in  \cite{brendle2019uniqueness, brendle2018uniqueness}, there is another type of canonical neighborhood. By the result of Brendle--Choi \cite{brendle2019uniqueness}, it turns out that there is only one more that has never been handled before, namely the Bowl$\times\mathbb{R}$. The details will be explained in the next section. This type of neighborhood will be handled by
 Theorem \ref{bowl x R improvement}. Some aspects of the proof are inspired by the Theorem 5.4 of \cite{brendle2019uniqueness} and the Proposition 2.6 of \cite{angenent2020uniqueness}.

 The constant of the Theorem \ref{bowl x R improvement} needs to depend on the constant of the Theorem \ref{Cylindrical improvement} because we need to repeatedly apply the Theorem \ref{Cylindrical improvement} in the proof.

\begin{Th}\label{bowl x R improvement}
There exists a constant  $L_1\gg L_0$ and  $0<\varepsilon_1\ll \varepsilon_0$ such that: for a mean curvature flow solution $M_t$ and a space-time point $(\bar{x},\bar{t})$,
if $\hat{\mathcal{P}}(\bar{x},\bar{t},L_1,L_1^2)$ is $\varepsilon_1$ close to a piece of $\text{Bowl}^{2} \times \mathbb{R}$ in $C^{10}$ norm after the parabolic rescaling (which makes $H(\bar{x},\bar{t})=1$)
and every points in $\hat{\mathcal{P}}(\bar{x},\bar{t},L_1,L_1^2)$ are $\varepsilon$ symmetric with $0<\varepsilon\leq \varepsilon_1$, then $(\bar{x},\bar{t})$ is $\frac{\varepsilon}{2}$ symmetric.
\end{Th}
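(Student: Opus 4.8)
The plan is to run the same Fourier-mode analysis as in the proof of Theorem \ref{Cylindrical improvement}, but adapted to the geometry of $\mathrm{Bowl}^{n}\times\mathbb{R}$, with the new difficulty being that the relevant region is not a product of a cylinder with a spatial domain — the $\mathrm{Bowl}$ factor has a tip and a non-cylindrical profile, so the ``angular'' variable on which $SO(2)$ acts only makes sense away from the rotation axis, and the natural boundary of our neighborhood in $\mathrm{Bowl}^{n}\times\mathbb{R}$ is not flat. First I would fix notation: after the parabolic rescaling making $H(\bar x,\bar t)=1$, normalize so that $\bar t=-1$ and the model $\mathrm{Bowl}^{n}\times\mathbb{R}$ has its rotation axis along a fixed coordinate direction and its $\mathbb{R}$-factor along another; use coordinates $(r\cos\theta,r\sin\theta, y, z)$ where $y$ parametrizes the remaining bowl directions and $z$ the line. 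As in Step 1 of Theorem \ref{Cylindrical improvement}, for every space-time point $(q,s)$ in $\hat{\mathcal{P}}(\bar x,\bar t,L_1,L_1^2)$ I would compare the symmetry field $K^{(q,s)}$ to the standard field $K_0=Jx$ using Lemma \ref{NRVFDistanceBowl} (whose hypothesis $\lambda_1+\lambda_2\geq\delta H$ holds on the bulk of a $\mathrm{Bowl}\times\mathbb{R}$ neighborhood, away from a definite neighborhood of the tip, for a fixed $\delta$), chain these comparisons along a path of $O(L_1)$ short steps, and conclude that, after rotating $M_t$, $\bar K:=K^{(\bar x,\bar t)}$ is $C(L_1)\epsilon$-close to $K_0$ on the whole neighborhood and $|\bar K|H\leq 5$ everywhere there. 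This also gives $|u|\leq C(L_1)\epsilon$ for $u:=\langle\bar K,\nu\rangle$, together with an affine-in-the-transverse-variables normal form for $u$ on each unit ball, exactly as in (\ref{finetuneK}).

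Next I would set up the PDE. The function $u=\langle\bar K,\nu\rangle$ solves the Jacobi equation $\partial_t u=\Delta_{M_t}u+|A|^2 u$, and on the exact model $\mathrm{Bowl}^n\times\mathbb{R}$ this becomes a linear parabolic equation $\partial_t u=\mathcal{L}u$ where $\mathcal{L}$ is the Jacobi operator of the static $\mathrm{Bowl}^n\times\mathbb{R}$ (which, being a translator, is a genuinely eternal, not self-similar, background, so there is no $t$-dependence in the coefficients — a simplification compared with the shrinking cylinder). Decomposing $u=\sum_m u_m(\cdot)\cos m\theta+v_m(\cdot)\sin m\theta$ on the region away from the axis, the modes $m\geq 2$ satisfy a drift-heat equation on (a large piece of) $\mathrm{Bowl}^{n-2}\times\mathbb{R}_y\times\mathbb{R}_z$ with an extra zeroth-order term coming from $|A|^2$ and from $-m^2/r^2$; because the bowl is strictly mean-convex with $H$ bounded below on the bulk, the $-m^2/r^2$ term gives a genuine negative contribution and one gets an exponential-in-(elapsed time) decay estimate, so summing over $m\geq 2$ contributes only $C(L_1)\epsilon_1\epsilon + C L_1^{-1}\epsilon$ on $\hat{\mathcal{P}}(\bar x,-1,100,100^2)$, just as in Step 4. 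The $m=1$ modes $u_1,v_1$ satisfy the linearized equation whose kernel on the model consists precisely of the components of rotational Killing fields tangent to $\mathrm{Bowl}^n\times\mathbb{R}$ crossed with the translation — i.e. affine functions of $(y,z)$ — and using the interior estimates on second derivatives together with the heat-kernel-on-a-box argument (Step 5) one reduces $u_1,v_1$ to affine functions up to $C(L_1)\epsilon_1\epsilon+CL_1^{-1}\epsilon$. The $m=0$ mode is killed by the same averaging identity $\int_0^{2\pi} u\sqrt{\cdots}\,d\theta=\int_0^{2\pi}(-\partial_\theta r)\,d\theta=0$ as in Step 6. Assembling the modes produces a new normalized rotational field $\tilde K$ with $|\langle\tilde K,\nu\rangle|H\leq C(L_1)\epsilon_1\epsilon+CL_1^{-1}\epsilon$ and $|\tilde K|H\leq 5$ on $\hat{\mathcal{P}}(\bar x,-1,100,100^2)$; choosing $L_1$ large so $CL_1^{-1}<1/4$ and then $\epsilon_1$ small so $C(L_1)\epsilon_1<1/4$ gives $\epsilon/2$-symmetry.

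The hard part — and the reason $L_1\gg L_0$ is forced — is that the above mode analysis is only valid on the portion of $\hat{\mathcal{P}}(\bar x,\bar t,L_1,L_1^2)$ where we are genuinely away from the rotation axis of the bowl; near the tip the angular decomposition degenerates, and the ``box'' on which I want to run the heat-kernel estimate for $m\geq 1$ has a boundary piece sitting in this bad region where I have no good control of $u$ beyond $|u|\leq C(L_1)\epsilon$. This is exactly the obstacle flagged in the introduction, and I would handle it by a barrier argument: on the box, construct a supersolution of the linear (drift-)heat equation for $\hat u_m$ that is of size $C(L_1)\epsilon$ on the good part of the parabolic boundary but is forced (by the $-m^2/r^2$ term, or alternatively by a cutoff weight that grows toward the axis) to dominate $|\hat u_m|$ on the bad, tip-adjacent part of the boundary while being exponentially small — like $e^{-cL_1}$ — at the center $\hat{\mathcal{P}}(\bar x,-1,100,100^2)$. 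Concretely one replaces the flat Dirichlet heat kernel of Step 4 by the kernel of the Jacobi operator with an added confining potential, and uses that the tip of the bowl is at parabolic distance $\sim L_1$ from $\bar x$, so its contribution to the value at the center is suppressed by $e^{-cL_1^2/(t-\tau)}$, which is absorbed into $CL_1^{-1}\epsilon$ after choosing $L_1$ large. I would also need the companion fact that the mean curvature is bounded, $H\leq 2$, on $\hat{\mathcal{P}}(\bar x,-1,100,100^2)$, which follows from the $\epsilon_1$-closeness to the model together with $H(\bar x,\bar t)=1$; and that, throughout the chained comparisons, the hypothesis ``$\lambda_1+\lambda_2\geq\delta H$'' needed to invoke Lemma \ref{NRVFDistanceBowl} holds at every comparison point — true with a uniform $\delta$ because closeness to $\mathrm{Bowl}^n\times\mathbb{R}$ forces the two smallest principal curvatures to behave like those of the bowl factor, which is strictly convex, away from a fixed tip neighborhood, and on that tip neighborhood the surface is close to a sphere so all principal curvatures are comparable to $H$. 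Once the barrier estimate replaces (\ref{hkest1})--(\ref{hkest2}) with the analogous bounds on the bowl, the rest of the argument is a line-by-line transcription of Steps 4--7 of Theorem \ref{Cylindrical improvement}.
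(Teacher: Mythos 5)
Your proposal takes a genuinely different route from the paper's, and the route you've chosen has a gap at exactly the relevant point.

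The paper does \emph{not} attempt a Fourier-mode decomposition of the Jacobi equation on $\mathrm{Bowl}\times\mathbb{R}$. Instead it works as follows. In Step 1 it disposes of the case $d(\bar x,l_{-1})\geq\Lambda_\star$ by applying Theorem~\ref{Cylindrical improvement} directly, so the nontrivial case is when $\bar x$ lies within bounded distance $\Lambda_\star$ of the rotation axis — i.e.\ in the tip region. In Step 3 it then iterates Theorem~\ref{Cylindrical improvement}: for a point at distance $\geq 2^{j/100}\Lambda_1$ from the axis and deep enough inside the parabolic neighborhood, the symmetry has already been boosted to $2^{-j}\epsilon$. This produces strong symmetry on the ``side'' boundary $\partial^1\Omega_j$ of a spacetime region $\Omega_j$ which \emph{contains} the tip, while on the ``end'' boundary $\partial^2\Omega_j$ (given by $|y_4|=W_j$, the direction of the $\mathbb{R}$-factor) one only has the raw bound $\epsilon$ up to a polynomial factor. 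The bad boundary is therefore in the $x_4$ (splitting) direction, not at the tip. Step 6 defines $f=e^{-\Phi(x)+\lambda(t-\bar t)}\,u/(H-c)$ with $\Phi(x)=\phi(x_4)$ a carefully chosen even weight that is exponentially large at $|y_4|=W_j$, and uses a maximum principle for the resulting parabolic equation (after verifying that the zeroth-order coefficient is negative) to propagate the strong $\partial^1\Omega_j$ control inward to $\hat{\mathcal{P}}(\bar x,-1,100,100^2)$, crushing the $\partial^2\Omega_j$ contribution. Note also the denominator $H-c$ is an essential part of the barrier (it is what generates the favorable sign via the evolution equation of $H$), and this piece does not appear in your proposal.

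The gap in your version is conceptual: you correctly observe that the Fourier/angular decomposition degenerates near the tip and propose to cut that region out and control it by a barrier that is ``exponentially small at the center $\hat{\mathcal{P}}(\bar x,-1,100,100^2)$.'' But after Step 1 has been disposed of, $\bar x$ \emph{is} in the tip region — it is within $\Lambda_\star$ of the axis by hypothesis. Your mode analysis is confined to the region far from the axis, and your barrier merely suppresses the tip's contribution to the \emph{far} region; it yields no information about $u$ at $\bar x$ itself, which is the whole point. In contrast, the paper's maximum-principle domain $\Omega_j$ includes the tip, so the conclusion holds there. Separately, your claim that the mode equations for $m\geq 2$ become heat equations on $\mathrm{Bowl}^{n-2}\times\mathbb{R}_y\times\mathbb{R}_z$ with favorable exponential decay is not justified: the bowl factor has $r$-dependent coefficients and decaying $|A|^2$, and assembling a usable kernel estimate on this non-product, non-self-similar geometry is a substantial undertaking that is not a ``line-by-line transcription'' of Steps 4--7. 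The iterated-improvement-plus-exponential-$x_4$-weight strategy in the paper is specifically designed to avoid having to solve this linear analysis on the bowl at all.
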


\begin{proof}
  Throughout the proof, $L_1$ is always assumed to be large enough depending only on $L_0,\varepsilon_0$ and $\varepsilon_1$ is assumed to be small enough depending  on $L_1, L_0, \varepsilon_0$. Also the constant $C$ depend only on
  $L_0, \varepsilon_0$ from Lemma \ref{Cylindrical improvement}.

  As in Lemma \ref{NRVFDistanceBowl}, we denote by $\Sigma$ the standard Bowl soliton in $\mathbb{R}^3$. That means the tip of $\Sigma$ is the origin, the mean curvature at the tip is $1$ and the rotation axis  is $x_3$, also it enclose the positive part of $x_3$ axis.
  Let $\omega_3$ be the unit vector in the $x_3$ axis that points to the positive part of $x_3$ axis (so $\omega_3$ coincide with the inward normal vector of $\Sigma$ at the origin).
  We write $\Sigma_t=\Sigma+\omega_3 (t+1)$, then $\Sigma_t$ is the translating mean curvature flow solution and $\Sigma=\Sigma_{-1}$.

  We can do a parabolic rescaling and  space-time translation such that $H(\bar{x},\bar{t})=1$ and $\bar{t}=-1$ .
  Moreover, there exists a scaling factor $\kappa>0$ such that the parabolic neighborhood $\hat{\mathcal{P}}(\bar{x},\bar{t},L_1,L_1^2)$ can be approximated by the family of translating $\kappa^{-1}\Sigma_t\times \mathbb{R}$ with an error that is $\varepsilon_1$ small in $C^{10}$ norm.
  i.e. can be written as a graph over the corresponding parabolic neighborhood in space-time track of $\kappa^{-1}\Sigma_t\times \mathbb{R}$ with graph norm $\varepsilon_1$ small in $C^{10}$ (See Definition \ref{closenessdef} and Remark \ref{closenessdef rmk}).

  By the above setup, the maximal mean curvature of $\kappa^{-1}\Sigma_t$ is $\kappa$. Let $p_t=\omega_3(t+1)\in\kappa^{-1}\Sigma_t$ be the tip.
  Let the straight line $l_t=\{p_t\}\times\mathbb{R}\subset \kappa^{-1}\Sigma_t\times
  \mathbb{R}$. The splitting direction $\mathbb{R}$ is assumed to be $x_4$ axis.

  By our normalization $H(\bar{x},-1)=1$, the maximality of $\kappa$ together with the approximation ensures that $\kappa\geq 1-C\varepsilon_1$.

   Define $d(\bar{x},l_t)=\min_{a\in l_t}| \bar{x}-a| $ to be the Euclidean distance between $\bar{x}$ and the line $l_t$.

   We divide the proof into several steps, the first step deals with the case that $\bar{x}$ is far away from $l_{-1}$ in which we can apply cylindrical improvement. The remaining steps deal with the case that $\bar{x}$ is not too far from $l_{-1}$. \\

   \noindent\textbf{Step 1:} We have the following  scale invariant statement  by the structure of Bowl soliton:

   There exists $\Lambda_{\star}\gg 1$ depending on $L_0,\varepsilon_0$ (not on $\kappa$) such that,
   if $(p,t)\in \kappa^{-1}\Sigma_t\times\mathbb{R}$ with $H(p,t)d(p,l_t)>\Lambda_{\star}/2$,
    then every point in $\hat{\mathcal{P}}_1(p,t,2L_0,4L_0^2)$ is $(\varepsilon_0/2,200)$ cylindrical, where $\hat{\mathcal{P}}_1$  denotes the parabolic neighborhood of $\kappa^{-1}\Sigma_t\times \mathbb{R}$.
  Moreover,  for each point $(q,s)\in \hat{\mathcal{P}}_1(p,t,2L_0,4L_0^2)$, the parabolic neighborhood
  $\hat{\mathcal{P}}_1(q,s,200^2,200^2)$ is contained in $\hat{\mathcal{P}}_1(p,t,5\cdot 10^5L_0,25\cdot10^{10}L_0^2)$, and thus is contained in $\hat{\mathcal{P}}_1(p,t,L_1/2,L_1^2/4)$ provided that $L_1$ is large enough compared to $L_0$.

  Since $\hat{P}(\bar{x},-1,L_1, L_1^2)$ can be written as a graph over the corresponding parabolic neighborhood in space-time track of $\kappa^{-1}\Sigma_t\times \mathbb{R}$ with $\varepsilon_1$ small graph norm (in $C^{10}$),
  the above statement is true within $\hat{P}(\bar{x},-1,L_1, L_1^2)$ with worse constants.
  In fact, we can simply use the graph map (See Remark \ref{closenessdef rmk}) to give an diffeomorphism between two hypersurfaces which is $\varepsilon_1$ close to the identity map (See definition \ref{closenessdef}).
  Under this map,  the size of the neighborhoods differ by at most $C(L_0, L_1)\varepsilon_1$. Also, being $(\varepsilon_0/2,200)$ cylindrical deteriorates to being $(\varepsilon_0/2 + C\varepsilon_1,200-C\varepsilon_1)$ cylindrical.
  Since we can choose $\varepsilon_1$ sufficiently small after $L_0,L_1,\varepsilon_0$, all above factors $C\varepsilon_1$ can be made much smaller than $\varepsilon_0/2$.
  By such an approximation, we get:

   If $d(\bar{x},l_{-1})>\Lambda_{\star}$, then for every point $(y,s)\in\hat{\mathcal{P}}(\bar{x},-1,L_0,L_0^2)$ the parabolic neighborhood $\hat{\mathcal{P}}(y,s,100^2,100^2)\subset\hat{\mathcal{P}}(\bar{x},-1,L_1,L_1^2)$ and $(y,s)$ is $(\varepsilon_0,100)$ cylindrical.

    Therefore in the case of $d(\bar{x},l_{-1})\geq \Lambda_{\star}$, the conditions of cylindrical improvement are satisfied. Hence $(\bar{x},-1)$ is $\frac{\varepsilon}{2}$ symmetric and we are done.\\

   In the following steps we assume that $d(\bar{x},l_{-1})<\Lambda_{\star}=\Lambda_{\star}(L_0,\varepsilon_0)$\\

  \noindent\textbf{Step 2:}
  By the asymptotic behaviour of $\Sigma_t$, we have the scale invariant identity:
  \[\frac{\kappa}{H(p)}=f(H(p)d(p,l_t))\] where $p\in \kappa^{-1}\Sigma_{t}\times\mathbb{R}$ and $f$ is a continuous increasing function such that $f(0)=1$.
  In fact $f(x)=O(x)$ as $x\rightarrow+\infty$.

  Since $\hat{P}(\bar{x},-1,L_1, L_1^2)$ is a graph over the corresponding parabolic neighborhood (in space-time track) of $\kappa^{-1}\Sigma_t\times \mathbb{R}$ with $\varepsilon_1$ small graph norm (in $C^{10}$),
  it's possible to find a point $p\in \kappa^{-1}\Sigma_{-1}\times\mathbb{R}$ with $| p-\bar{x}| \leq \varepsilon_1$ and $| H(p)-1| \leq C\varepsilon_1$,
  therefore $d(p,l_{-1})\leq\Lambda_{\star}+C\varepsilon_1$, thus we have the upper and lower bound for $\kappa$:
  \begin{align}\label{upperlowerboundkappa}
     \frac{1}{2}<1-C\varepsilon_1<\kappa<f((1+C\varepsilon_1)(\Lambda_{\star}+\varepsilon_1))<C\Lambda_{\star}
  \end{align}
  This means $\kappa^{-1}\Sigma_{t}\times\mathbb{R}$ is equivalent to the standard Bowl$\times\mathbb{R}$ up to a scaling factor depending on $L_0,\varepsilon_0$. \\

  \noindent\textbf{Step 3:}
   We define a series of set $\text{Int}_j(\hat{\mathcal{P}}(\bar{x},-1,L_1,L_1^2))$ inductively:
   \begin{align*}
   &\text{Int}_{0}(\hat{\mathcal{P}}(\bar{x},-1,L_1,L_1^2))=\hat{\mathcal{P}}(\bar{x},-1,L_1,L_1^2)\\
  &(y,s)\in \text{Int}_j(\hat{\mathcal{P}}(\bar{x},-1,L_1,L_1^2))\Leftrightarrow
  \hat{\mathcal{P}}(y,s,10^6L_0,10^{12}L_0^2)\subset \text{Int}_{j-1}(\hat{\mathcal{P}}(\bar{x},-1,L_1,L_1^2))
  \end{align*}
  We  abbreviate $\text{Int}_j(\hat{\mathcal{P}}(\bar{x},-1,L_1,L_1^2))$ as $\text{Int}(j)$. It's clear that Int($j$) is a decreasing set that are all contained in $\hat{\mathcal{P}}(\bar{x},-1,L_1,L_1^2)$. \\

    We claim that there exists $\Lambda_1\gg \Lambda_{\star}$, depending only on $\varepsilon_0, L_0$, such that for any point $(x,t)\in \text{Int}_j(\hat{\mathcal{P}}(\bar{x},-1,L_1,L_1^2))$, if $d(x,l_t)\geq 2^{\frac{j}{100}}\Lambda_1$, then $(x,t)$ is $2^{-j}\varepsilon$ symmetric. $j=0,1,2,...$\\

  To achieve this, we do induction on $j$. By the assumption in the theorem, $j=0$ is automatically true. Now suppose that the statement is true for $j-1$.

  Similar to step 1, by the structure of the Bowl soliton we can find a constant $\Lambda_1$ large enough depending only on $L_0,\varepsilon_0$ with the following property: if the point $q\in \kappa^{-1}\Sigma_{t}\times\mathbb{R}$ satisfies $d(q,l_t)\kappa\geq \Lambda_{1}/2$,
  then every point $(q',t')\in\hat{\mathcal{P}}_1(q,t,L_0,L_0^2)$ is $(\varepsilon_0/2,200)$ cylindrical and
  $\hat{\mathcal{P}}_1(q',t',200^2,200^2)\subset\hat{\mathcal{P}}_1(q,t,5\cdot10^5L_0,25\cdot10^{10}L_0^2)$.
  Moreover, $d(q,l_t)H(q)>2000L_0$ holds.
  Reasoning as in step 1, the $\varepsilon_1$ approximation gives us the same statement within  $\hat{P}(\bar{x},-1,L_1, L_1^2)$ with worse constants:

   \begin{align}\label{bigdistimpliescylindrical}
     &\text{If }  d(x,l_t)\geq \Lambda_{1},
    \text{then every point }(y,s)\in\hat{\mathcal{P}}(x,t,L_0,L_0^2) \text{ is } (\varepsilon_0,100) \text{ cylindrical}\text{ and }  \\ & d(x,l_t)H(x)\geq 1000L_0. \text{ Moreover } \hat{\mathcal{P}}(y,s,100^2,100^2)\subset\mathcal{P}(x,t,10^6L_0,10^{12}L_0^2)\notag
   \end{align}

   \begin{rem}
     Validation of  $(\varepsilon_0,100)$ cylindrical condition involves a rescaling process, while the mean curvature at the point $(q,t)$ in our context may be much smaller than 1. Indeed it depends on $L_1$, thus the scaling factor also depends on $L_1$.
   But this is not an issue because  it will be conquered by $\varepsilon_1$, since $\varepsilon_1$ is chosen after $L_1$.
   \end{rem}

    Recall the approximation by graph map described in step 1. Since
    the graph norm is $\varepsilon_1$ small in $C^{10}$, the distance quantities and normal vectors only differ by $C\varepsilon_1$. Under such an approximation and structure of the Bowl soliton, we know that if $(x,t)\in\hat{\mathcal{P}}(\bar{x},0,L_1,L_1^2)$ with $d(x,l_t)\geq 1 \geq \frac{1}{2}\kappa^{-1}$, then
    \begin{itemize}
      \item $\frac{\left<\nu, x-x'\right>}{| x-x'| }\leq C\varepsilon_1$
      \item $\frac{\left<\omega_n, x-x'\right>}{| x-x'| }\geq C^{-1}$
    \end{itemize}
    where $x'$ is a point on $l_t$ such that $| x-x'| =d(x,l_t)$.

    Let $x(t)$ be the trajectory of $x$ under mean curvature flow, such that
    $x(t_0)=x_0$ and $d(x_0,l_{t_0})> 1$, then for $t\leq t_0$
    \begin{align}\label{distancetotipdecreasing}
      \frac{d}{dt}d(x(t),l_t)^2&= \left<x-x',H(x)\nu(x)-\kappa\omega_n\right>\\
      &\leq| x-x'| (C\varepsilon_1 - C^{-1}) < 0 \notag
    \end{align}
    Therefore $x(t)$ increases as $t$ decreases, as long as $x(t)\in\hat{\mathcal{P}}(\bar{x},0,L_1,L_1^2)$.

    Now suppose that $(x,t)\in\text{Int}(j)$ and $d(x,l_t)\geq 2^{\frac{j}{100}}\Lambda_1.$
    For any point $(\tilde{x},t)\in\hat{\mathcal{P}}(x,t,L_0,L_0^2)$, by (\ref{bigdistimpliescylindrical})
    \begin{align}\label{distance increasing1}
      d(\tilde{x},l_t)&\geq d(x,l_t)-d(x,\tilde{x})\\
      &\geq d(x,l_t)-L_0H(x)^{-1}\notag\\
      &\geq (1-1/1000)d(x,l_t)\notag\\
      &\geq (1-1/1000) 2^{\frac{j}{100}}\Lambda_1>2^{\frac{j-1}{100}}\Lambda_1 \notag
    \end{align}
    Additionally, every point $(y,s)\in\hat{\mathcal{P}}(x,t,L_0,L_0^2)$ must be in Int($j-1$), hence by (\ref{distancetotipdecreasing}) and (\ref{distance increasing1}) $d(y,l_s)>2^{\frac{j-1}{100}}\Lambda_1$ and therefore is $2^{-j+1}\varepsilon$ symmetric by induction hypothesis.

    Together with (\ref{bigdistimpliescylindrical}), the condition of the cylindrical improvement are satisfied. Hence $(x,t)$ is $2^{-j}\varepsilon$ symmetric. So the conclusion is true for $j$.\\

   \noindent\textbf{Step 4:} Recall that $d(\bar{x},l_{-1})\leq \Lambda_{\star}$. We first show the following:

    \begin{align}\label{sizeofInt}
      &\hat{\mathcal{P}}(\bar{x},-1,10^{-8}L_0^{-1}\hat{L},10^{-16}L_0^{-2}\hat{L}^2)
    \text{ is contained in } \text{Int}_1\hat{\mathcal{P}}(\bar{x},-1,\hat{L},\hat{L}^2)\\\notag
    &\text{ whenever  } 10^{-16}L_0^{-2}\hat{L}>\Lambda_1 \text{ and } \hat{L}<L_1
    \end{align}.

   To see this, we notice that the statement with stronger constants hold in $\kappa^{-1}\Sigma_t\times\mathbb{R}$, namely:
    if $p\in \kappa^{-1}\Sigma_{-1}\times\mathbb{R}$ satisfies $d(p,l_{-1})<2\Lambda_{\star}$, then
    $\hat{\mathcal{P}}_1(p,-1,10^{-7}L_0^{-1}\hat{L},10^{-14}L_0^{-2}\hat{L}^2)
    $ is contained in $ \text{Int}_1\hat{\mathcal{P}}_1(p,-1,\hat{L},\hat{L}^2)$ whenever $10^{-7}L_0^{-1}\hat{L}>\Lambda_1$. This can be verified by the asymptotic behaviour of the Bowl soliton.
    Reasoning as in step 1, the $\varepsilon_1$ approximation gives us the same statement within  $\hat{P}(\bar{x},-1,L_1, L_1^2)$ with worse constants, (\ref{sizeofInt}) is true.

    By (\ref{sizeofInt}) and induction on $j$ we obtain:
    \begin{align}\label{size of Int 2}
      \hat{\mathcal{P}}(\bar{x},-1,10^{-8j}L_0^{-j}L_1,10^{-16j}L_0^{-2j}L_1)\subset\text{Int}(j) \text{ \ if } 10^{-8j-8}L_0^{-j-1}L_1>\Lambda_1
    \end{align}

    By (\ref{size of Int 2}) and Step 3 we immediately get that:\\

    For any point   $(x,t)\in\hat{\mathcal{P}}(\bar{x},-1,10^{-8j}L_0^{-1}L_1,10^{-16j}L_0^{-2}L_1)$ with
    $d(x,l_t)\geq 2^{\frac{j}{100}}\Lambda_1$ and $j<\frac{1}{8}\log_{10}(\frac{L_1}{L_0\Lambda_1})$
    is $2^{-j}$ symmetric. \\

    Next we show that there exists a constant $C$ independent of $L_1$
    such that $(x,t)$ is $C \cdot 2^{-j/4}\varepsilon$ symmetric for any   $j<\frac{1}{8}\log_{10}(\frac{L_1}{L_0\Lambda_1})$. Subsequently we can take
    a large $L_1$ such that $j$ is allowed to be $[4\log_2(2C)]+4$ and we are done.\\

    \noindent\textbf{Step 5:} Define the region

    \begin{align*}
      \Omega_j^t=&\{(y,t)\big\lvert  | y_4| \leq W_j, d(y,l_t)\kappa\leq D_j\}\\
      \Omega_j=&\bigcup\limits_{t\in[-1-T_j,-1]} \Omega_j^t\\
      \partial^1\Omega_j=&\{(y,s)\in\partial\Omega_j| d(y,l_s)\kappa= D_j\}\\
      \partial^2\Omega_j=&\{(y,s)\in\partial\Omega_j\big\lvert  | y_n| =W_j| \}
    \end{align*}
    where $D_j=2^{\frac{j}{100}}\Lambda_1$, $W_j=T_j=2^{\frac{j}{50}}\Lambda_1^2$.

     Now every point in $\partial^1\Omega_j^t$ is $2^{-j}\varepsilon$ symmetric by the previous argument. But the symmetry at each point is realized by different normalized rotation vector fields,
     we claim next that there is a single vector field that does the job.\\

    \noindent\textbf{Claim:} There exists a normalized rotation vector field $K_j$ such that
    \begin{align}\label{}
     & | \left<K_j,\nu\right>| H\leq C_1(W_j+D_j+T_j)^2 2^{-j}\varepsilon \text{ on }
     \partial^1\Omega_j^t \\
     & | \left<K_j,\nu\right>| H \leq C_1(W_j+D_j+T_j)^2 \varepsilon  \text{ on } \Omega_j \
     (\text{in particular, on }   \partial^2\Omega_j\cup \Omega_j^{-1-T_j}
     )
    \end{align}
     where $C_1=C_1(\kappa)=C_1(L_0)$ \\

    \textit{Proof:} First of all, for any point $(y,s)\in\text{Int}(j)$  we can find a normalized rotation field, denoted by $K^{(y,s)}$ such that
    $| \left<K^{(y,s)},\nu\right>| H\leq \varepsilon$ and $| K| H<5$ in the parabolic ball
    $\hat{\mathcal{P}}(y,s,100,100^2)$. Moreover, if $2^{\frac{j}{100}}\Lambda_1\leq d(y,l_s)<2^{\frac{j+1}{100}}\Lambda_1$, then
    $|
    \left<K^{(y,s)},\nu\right>| H\leq 2^{-j}\varepsilon$.

    Let $K_0(x)=Jx$. It's easy to check that $\left<K_0,\nu\right>=0$ and $| K_0| H<2$  on $\kappa^{-1}\Sigma_t\times\mathbb{R}$. (See Appendix \ref{ode of bowl} for details). By the approximation we know that $| \left< K_0,\nu\right>| H<2\varepsilon_0<\varepsilon_{bowl}$ and $| K_0| H<2+2\varepsilon_0<5$ on $M_t$.

    For any point $(z,\tau)\in\Omega_j$, a large parabolic neighborhood $\hat{\mathcal{P}}(z,\tau,100,100^2)$ is $\varepsilon_0<\varepsilon_{cyl}/4$ close to a piece of $\kappa^{-1}\Sigma_t\times\mathbb{R}$ after the renormalization.
    Together with the structure of the Bowl soliton the ball $B_g(z,H(z,\tau)^{-1})$ is either $\varepsilon_{cyl}$ close to $S^1\times\mathbb{R}$ (after renormalization) or has  $\lambda_1+\lambda_2\geq \varepsilon_{cyl}H/4 $.

     Therefore either Lemma \ref{NRVFDistanceCyl} or Lemma \ref{NRVFDistanceBowl} applies and we get:
     \[\min\Big\{\max\limits_{B_{LH(z,\tau)^{-1}}(z)}| K_0-K^{(z,\tau)}| H,\max\limits_{B_{LH(z,\tau)^{-1}}(z)}| K_0+K^{(z,\tau)}| H\Big\}<C(L+1)\varepsilon_1\]
     If we take $L=10\kappa^2L_1^4$, then $B_{LH(z,\tau)^{-1}}(z)$  contains every time slice of     $\hat{\mathcal{P}}(\bar{x},-1,L_1,L_1^2)$. Therefore we have :
     \[\min\Big\{\max\limits_{\hat{\mathcal{P}}(\bar{x},-1,L_1,L_1^2)}| K_0-K^{(z,\tau)}| H,\max\limits_{\hat{\mathcal{P}}(\bar{x},-1,L_1,L_1^2)}| K_0+K^{(z,\tau)}| H\Big\}<C\kappa^2L_1^4\varepsilon_1<\varepsilon_0\]
    The last inequality holds because we can choose $\varepsilon_1$ after $L_1$.
    In particular:
    \begin{align}\label{KHmax}
      | K^{(z,\tau)}| H<2+3\varepsilon_0<5 \text{ in } \hat{\mathcal{P}}(\bar{x},-1,L_1,L_1^2) \text{ for each } (z,\tau)\in \text{Int}(j)
    \end{align}

    Next, we fix an arbitrary point $(x,-1)\in\partial^1\Omega_j$ and let $\bar{K}=K^{(x,-1)}$.
    Given a point $(y,s)\in \partial^1\Omega_j\cup \partial^2\Omega_j$, we want to show that $K^{(y,s)}$ deviate from $\bar{K}$ by at most $C\varepsilon$.

    If $(y,s)\in \partial^2\Omega_j\subset\Omega_j$, then we can connect $(x,-1)$ and $(y,s)$ by at most $C\kappa(D_j+W_j+T_j)$ consecutive points
     such that the each latter point lies in the parabolic neighborhood of size $\frac{1}{4}$ of the previous point. To be precise,
     we can find $(x_k,t_k)$, $k=0,1,...,N$ such that
    \begin{enumerate}
      \item $ N \leq C\kappa(D_j+W_j+T_j)$
      \item $(x_0,t_0)  =(x,t)$ and $(x_N,t_N)=(y,s)$
      \item  $(x_{k+1},t_{k+1})\in \hat{\mathcal{P}}(x_k,t_k,a,a^2)$
      \item  $(x_k,t_k)\in \Omega_j$
    \end{enumerate}
    where $a=\frac{1}{4}$.
    To see this, we notice that such a sequence of points can be found on the corresponding set of $\kappa^{-1}\Sigma_t\times\mathbb{R}$ with $a=\frac{1}{8}$, then use the approximation described in step 1 to get the corresponding sequence of points on $\Omega_j$.

     The property (3) above ensures that

     $B_{g}(x_{k+1},H(x_{k+1},t_{k+1})^{-1})\times \{t_{k+1}\}\subset\hat{\mathcal{P}}(x_k,t_k,100,100^2)$.

     Again we just need to verify the parallel statement in $\kappa^{-1}\Sigma_t\times\mathbb{R}$ with worse constants and then use approximation, namely:
     if $(p_{k+1},t_{k+1})\in\hat{\mathcal{P}}_1(p_k,t_k,1/2,1/4)$, then

     $B_{g}(p_{k+1},H(p_{k+1})^{-1})\times\{t_{k+1}\}\subset\hat{\mathcal{P}}_{1}(p_k,t_k,50,50^2)$

     By  symmetry assumption and definition \ref{varepsilonsymmetrydef} we know that $| \left<K^{(x_k,t_k)},\nu\right>| H$ and $| \left<K^{(x_{k+1},t_{k+1})},\nu\right>| H$ are both less than $\varepsilon$ on $B_{g}(x_{k+1}, H(x_{k+1},t_{k+1})^{-1})$.

     Together with (\ref{KHmax}) we see that the conditions of either Lemma \ref{NRVFDistanceCyl} or \ref{NRVFDistanceBowl} are satisfied, therefore we may conclude that
     \[\min\left\{
     \begin{array}{c}
       \max\limits_{B_{LH(x_{k+1},t_{k+1})^{-1}}(x_{k+1})}| K^{(x_k,t_k)}-K^{(x_{k+1},t_{k+1})}| H \\
       \max\limits_{B_{LH(x_{k+1},t_{k+1})^{-1}}(x_{k+1})}|
       K^{(x_k,t_k)}+K^{(x_{k+1},t_{k+1})}| H
     \end{array}
     \right\}<C(L+1)\varepsilon\]

     Taking $L=C\kappa(D_j+W_j+T_j)$ with a large universal constant $C$, then $B_{LH(x_{k+1},t_{k+1})^{-1}}(x_{k+1})$ contains every time slices of $\hat{\mathcal{P}}(y,s,100,100^2)$, so we can rewrite the above as:
     \begin{align}\label{Kdifference1}
     \min\left\{
     \begin{array}{c}
       \max\limits_{\hat{\mathcal{P}}(y,s,100,100^2)}| K^{(x_k,t_k)}-K^{(x_{k+1},t_{k+1})}| H \\
       \max\limits_{\hat{\mathcal{P}}(y,s,100,100^2)}| K^{(x_k,t_k)}+K^{(x_{k+1},t_{k+1}}| H
     \end{array}
       \right\}<C\kappa(D_j+W_j+T_j)\varepsilon
     \end{align}

     We sum up (\ref{Kdifference1}) for $k=0,1,...N-1$ and get:
     \begin{align*}
     \min\left\{
     \begin{array}{c}
       \max\limits_{\hat{\mathcal{P}}(y,s,100,100^2)}| \bar{K}-K^{(y,s)}| H\\ \max\limits_{\hat{\mathcal{P}}(y,s,100,100^2)}| \bar{K}+K^{(y,s}| H
     \end{array}
     \right\}
       <CN\kappa(D_j+W_j+T_j)\varepsilon\\
       <C\kappa(D_j+W_j+T_j)^2\varepsilon
     \end{align*}
     Finally we conclude that $| \left<\bar{K},\nu\right>| H<C\kappa(D_j+W_j+T_j)^2\varepsilon$ in $\hat{\mathcal{P}}(y,s,100,100^2)$. Hence $\bar{K}$ is qualified as a normalized rotation vector field that realize the $C_1(D_j+W_j+T_j)^2\varepsilon$ symmetry at any point in $\Omega_j$.
     (and hence any point on $\partial^2\Omega_j$ ), where $C_1=C_1(\kappa)=C_1(L_0)$.

     Next, we consider the case that $(y,s)\in\partial^1\Omega$. The above procedure can still be applied with an extra requirement that  $(x_k,t_k)\in\partial^1\Omega_j$ for each $k$. In fact the upper bound for $N$ can be improved,
     but there will be no essential differences.

     In this case
     $| \left<K^{(x_k,t_k)},\nu\right>| H$ and $| \left<K^{(x_{k+1},t_{k+1})},\nu\right>| H$ are both less than $2^{-j}\varepsilon$ on $B_{H(x_{k+1},t_{k+1})^{-1}}(x_{k+1})$, therefore we have a stronger conclusion:
      \begin{align*}
     \min\left\{
     \begin{array}{c}
       \max\limits_{\hat{\mathcal{P}}(y,s,100,100^2)}| \bar{K}-K^{(y,s)}| H\\ \max\limits_{\hat{\mathcal{P}}(y,s,100,100^2)}| \bar{K}+K^{(y,s}| H
     \end{array}
     \right\}
       <CN\kappa(D_j+W_j+T_j)2^{-j}\varepsilon\\
       <C\kappa(D_j+W_j+T_j)^22^{-j}\varepsilon
     \end{align*}
     claim  is proved.

     With all the claims we are able to do analysis on $\Omega_j$. We take $\bar{K}$ as in the claim  and let $u=\left<\bar{K},\nu\right>$. \\

     \noindent\textbf{Step 6:}

     Let $\Phi(x)=\phi(x_4)=\phi(\left<x,\omega_4\right>)$ where $\phi$ is a one variable function to be specified later in (\ref{definition of phi}).

     Define a function
      \[f(x,t)=e^{-\Phi(x)+\lambda(t-\bar{t})}\frac{u}{H-\mu}\]
       on $M_t$
     where $\lambda,\mu$ will be determined later.

     Use the evolution equation
     \begin{align*}
       \partial_t u & =\Delta u+| A| ^2u\\
       \partial_t H & =\Delta H+ | A| ^2H
     \end{align*}

     We get the evolution equation for $\frac{u}{H-\mu}$:
     \begin{align*}
       (\partial_t -\Delta )\left(\frac{u}{H-\mu}\right)=&\frac{(\partial_t -\Delta )u}{H-\mu}-
       \frac{u(\partial_t -\Delta )H}{(H-\mu)^2}+\frac{2\left<\nabla u,\nabla H\right>}{(H-\mu)^2}-\frac{2| \nabla H| ^2u}{(H-\mu)^3}\\
       =&-\frac{\mu u| A| ^2}{(H-\mu)^2}+2\left<\frac{\nabla H}{H-\mu},\nabla\left(\frac{u}{H-\mu}\right)\right>
     \end{align*}

     Then the evolution equation for $f$:
     \begin{align}\label{EQNforf}
       (\partial_t -\Delta )f=&e^{-\Phi+\lambda(t-\bar{t})}(\partial_t -\Delta )\left(\frac{u}{H-\mu}\right)+(\lambda-\partial_t\Phi+\Delta \Phi-| \nabla \Phi| ^2)f  \\
       &+2e^{-\Phi+\lambda(t-\bar{t})}\left<\nabla \Phi,\nabla\left(\frac{u}{H-\mu}\right)\right>\notag\\
       =&\left(\lambda-\frac{\mu| A| ^2}{H-\mu}-\partial_t\Phi+\Delta \Phi-| \nabla\Phi| ^2\right)f\notag\\
       &+2e^{-\Phi+\lambda(t-\bar{t})}\left<\nabla\left(\frac{u}{H-\mu}\right),\nabla \Phi+\frac{\nabla H}{H-\mu}\right>\notag\\
       =&\left(\lambda-\frac{\mu| A| ^2}{H-\mu}-\partial_t\Phi+\Delta \Phi-| \nabla\Phi| ^2\right)f+2\left<\nabla f,\nabla\Phi+\frac{\nabla H}{H-\mu}\right>\notag\\
       &+2\left<\nabla \Phi,\nabla\Phi+\frac{\nabla H}{H-\mu}\right>f\notag
     \end{align}

     \begin{align*}
         =&\left(\lambda-\frac{\mu| A| ^2}{H-\mu}-\partial_t\Phi+\Delta \Phi +| \nabla\Phi|
       ^2+2\frac{\left<\nabla \Phi,\nabla H\right>}{H-\mu}\right)f \notag\\
       &+2\left<\nabla f,\nabla\Phi+\frac{\nabla H}{H-\mu}\right> \notag
     \end{align*}
 
     We have the following computations:
     \begin{align*}
       &\partial_t\Phi=\phi'(x_4)\left<\partial_t x, \omega_4\right>=\phi'(x_4)\left<\vec{H},\omega_4\right>\\
       &\Delta\Phi=\phi'(x_4)\Delta x_4+\phi''(x_4)| \nabla x_4| ^2=\phi'(x_4)\left<\vec{H},\omega_4\right>+\phi''(x_4)| \omega_4^T| ^2\\
       &\left<\nabla\Phi,\nabla H\right>=\phi'(x_4)\left<\nabla x_4,\nabla H\right>=\phi'(x_4)\left<\omega_4^{T},\nabla H\right>\\
       &| \nabla \Phi| ^2=\phi'(x_4)^2| \nabla x_4| ^2=\phi'(x_4)^2| \omega_4^T| ^2
     \end{align*}

      Here $x_4$ is a short hand of $\left<x,\omega_4\right>$ . Also $\omega_4^T$ means the projection onto the tangent plane of $M_t$.
      In the second line we used the identity $\Delta x=\vec{H}$.

     Notice that $\left<\vec{H},\omega_4\right>=0$ and $\left<\omega_4,\nabla H\right>=0$ on $\kappa^{-1}\Sigma_t\times\mathbb{R}$. By approximation described in step 1 we have:
     $| \omega_4^T-\omega_4| +| \left<\nabla H,\omega_4\right>|   + | \left<\vec{H},\omega_4\right>|  \leq C(L_1)\varepsilon_1$ in $\hat{\mathcal{P}}(\bar{x},-1,L_1,L_1^2)$, since these quantities only involve at most two derivatives of the graph function, which is $\varepsilon_1$ small in $C^{10}$.

     Thus we have the estimate:
     \begin{align*}
       | \partial_t\Phi|  & \leq C(L_1)\varepsilon_1| \phi'(x_4)| \\
       | \Delta\Phi|   & \leq C(L_1)\varepsilon_1| \phi'(x_4)| +| \phi''(x_4)| \\
       | \left<\nabla\Phi,\nabla H\right>|  & \leq  C(L_1)\varepsilon_1| \phi'(x_4)| \\
       | \nabla \Phi| ^2&\leq\phi'(x_4)^2
     \end{align*}
     Recall that $D_j=2^{\frac{j}{100}}\Lambda_1$, $W_j=T_j=2^{\frac{j}{50}}\Lambda_1^2$. Since $\Omega_j\subset\hat{\mathcal{P}}(\bar{x},-1,L_1,L_1^2)$,  there holds $D_j<2L_1$.  We also choose  $\varepsilon_1<D_j^{-2}$.

     Next we choose an even function $\phi\in C^2(\mathbb{R})$ satisfying the following
     \begin{align} \label{}
       | \phi'|  & \leq \frac{1}{20}D_j^{-1/2}   \label{phi condition1}\\
       | \phi''|  & \leq \frac{1}{400}D_j^{-1}   \label{phi condition2}\\
       \phi(W_j)& \geq \ln\left((W_j+D_j+T_j)^{20}\right)
       \label{phi condition3}\\
       \phi(200) & \leq \ln\left(W_j+D_j+T_j\right)  \label{phi condition4}\\
       \phi(0) &= 0, \ \phi'>0 \text{ when } x>0 \label{phi condition5}
     \end{align}
     provided that $j\geq j_1$ for some $j_1$ that only depends on $L_0$.

     We can take
     \begin{align}\label{definition of phi}
       \phi(s)=\frac{1}{400D_j}\ln\cosh(s)
     \end{align}

     It's straightforward to check 
     (\ref{phi condition1}), (\ref{phi condition2}),  (\ref{phi condition4}), (\ref{phi condition5}). We only check (\ref{phi condition3}), notice that $s-1<\ln\cosh(s)\leq s$ :
     \begin{align*}
       \phi(W_j)& \geq \frac{1}{400}D_j^{-1}(W_j-1) \geq 2^{j/200} \geq \ln(2^{j}) \geq \ln((W_j+D_j+T_j)^{20})
     \end{align*}
    as long as $j\geq j_1$.

     Finally we let  $\lambda=\frac{1}{1000}D_j^{-1}, \mu=\frac{1}{8}D_j^{-1/2}$  .

     Note that on the standard $n$ dimensional Bowl soliton we have
     $H\approx \sqrt{\frac{n-1}{2}}d^{-1/2}$ and $| A| ^2 \approx \frac{1}{2}d^{-1}$ when $d$ is large, where $d$ is the distance to the tip.
     In our case $n=2$.

     By approximation described in step 1, we know that
     $| A| ^2>\frac{1}{8}D_j^{-1}$ and $\frac{1}{4}D_j^{-1/2}<H<4(\kappa D_j)^{1/2}$ in $\Omega_j$ since the curvature only involves two derivatives of the graph function. In particular $H-\mu>\frac{1}{8}D_j^{-1/2}$ and therefore the denominator is never 0.

     With above discussions we show that the coefficient of $f$ in (\ref{EQNforf}) is negative:
     \begin{align*}
       &\lambda-\frac{\mu| A| ^2}{H-\mu}-\partial_t\Phi+\Delta \Phi+|
       \nabla\Phi| ^2+2\frac{\left<\nabla \Phi,\nabla H\right>}{H-\mu}\\
       \leq &\lambda-\frac{\mu H^2}{3(H-\mu)} +C(L_1)\varepsilon_1| \phi'(x_4)| +| \phi''(x_4)| +\phi'(x_4)^2+C(L_1)\varepsilon_1| \phi'(x_4)| \cdot 8D_j^{1/2} \\
       \leq & -\frac{1}{96}D_j^{-1}+\frac{1}{1000}D_j^{-1} +\frac{1}{400}D_j^{-1}+\frac{1}{400}D_j^{-1} + C(L_1)\varepsilon_1<0
     \end{align*}
     in the second line we used $| A| ^2\geq H^2/3$ and in the third line we used (\ref{phi condition1}) (\ref{phi condition2})  .
     We will choose $\varepsilon_1$ sufficiently small after fixing a large $j$ and $L_1$, therefore we may assume that $ C(L_1)\varepsilon_1 < \frac{1}{1000}D_j^{-1}$, which justified the last inequality.

     Then maximum principle applies to (\ref{EQNforf}), we have
     $\sup\limits_{\Omega_j}| f| \leq \sup\limits_{\partial\Omega_j}| f| $.

     The boundary $\partial\Omega_j=\partial^1\Omega_j\cup\partial^2\Omega_j\cup\Omega_j^{-1-T_j}$.
     By the claim in the Step 5 and (\ref{phi condition1})- (\ref{phi condition5}), we can estimate $| f| $ on each of the boundary portion:
     \begin{itemize}
       \item on the boundary portion $\partial^1\Omega_j$:
            \begin{align*}
              \sup\limits_{\partial^1\Omega_j}| f|  &\leq  \sup\limits_{\partial^1\Omega_j}\frac{| u| H}{(H-\mu)H}\\
              &\leq C_1(W_j+D_j+T_j)^2 2^{-j}\varepsilon\cdot 32D_j \\
              &\leq 2^{-j/2}C\varepsilon
            \end{align*}

       \item on the boundary portion $\partial^2\Omega_j$:
            \begin{align*}
              \sup\limits_{\partial^2\Omega_j}| f| \leq & \sup\limits_{\partial^2\Omega_j}e^{-\phi(W_j)}\frac{| u| H}{H(H-\mu)}\\
              \leq &e^{-\phi(W_j)}\cdot C_1(W_j+D_j+T_j)^2 \varepsilon\cdot32D_j \\
               \leq& C(W_j+D_j+T_j)^{-10}\varepsilon\\
               \leq& 2^{-j/5}C\varepsilon
            \end{align*}
         we used (\ref{phi condition3}) here.
       \item on the boundary portion $\Omega_j^{-1-T_j}$:
            \begin{align*}
              \sup\limits_{\Omega_j^{-1-T_j}}| f| \leq & e^{-\lambda T_j}\sup\limits_{\Omega_j}\frac{| u| H}{H(H-\mu)} \\
               \leq & e^{\frac{-D_j}{1000}}\cdot C_1(W_j+D_j+T_j)^2\cdot 32D_j\varepsilon \\
               \leq & 2^{-j}C\varepsilon
            \end{align*}
       we used $\lambda=\frac{1}{1000}D_j^{-1}$ and $T_j=D_j^2$ here.
     \end{itemize}
      $C$ is a constant that only depends on $L_0$.

     Putting them together we have:
     \begin{align}\label{maxf}
        \sup\limits_{\Omega_j}| f| \leq &\sup\limits_{\partial\Omega_j}| f| \leq
         2^{-\frac{j}{5}}C\varepsilon
     \end{align}
    provided that $j\geq j_1$.

     Now we come back to look at the parabolic neighborhood $\hat{\mathcal{P}}(\bar{x},-1,100,100^2)$. By normalization $H(\bar{x},-1)=1$ and approximation, in this parabolic neighborhood we have:
     \begin{itemize}
       \item $| x_4| <200$
       \item $-10^4-1\leq t\leq -1$
       \item $d(x,l_t)<10^5\kappa+\Lambda_{\star}$
     \end{itemize}
      $| x_4| <200, -10^4-1\leq t\leq -1, d(x,l_t)<10^5\kappa+\Lambda_{\star}$. When $L_1$ is large enough and $j$ is appropriate such that
      \begin{align}\label{j condition A}
        \hat{\mathcal{P}}(\bar{x},-1,100,100^2)\subset\Omega_j\subset\text{Int}(j)
      \end{align}
     Then we have:
     \begin{align}\label{finalineqn}
       | u(y,s)| H(y,s)= & e^{\phi(x_4)-\lambda(t+1)}(H(y,s)-\mu)| f(y,s)| H(y,s)\\
       \leq & C e^{\phi(200)+10^4\lambda}\cdot D_j\cdot 2^{-j/5}\varepsilon\notag \\
       \leq & C(W_j+D_j+T_j)D_j\cdot 2^{-j/5}\varepsilon\notag\\
       \leq & 2^{-j/10}C_2\varepsilon \notag
     \end{align}
     in $\hat{\mathcal{P}}(\bar{x},-1,100,100^2)$,
     where $C_2$ depends only on $L_0,\varepsilon_0$.

     We pick the constants in the following order: First we can find $j=j_2\geq j_1$ depending only on $L_0,\varepsilon_0$ such that (\ref{j condition A}) holds and
     \begin{align}\label{j condition 2}
       2^{-j_2/10}C_2<\frac{1}{2}
     \end{align}
     Next we pick $L_1$ large enough and finally pick $\varepsilon_1$ small enough
     (Therefore, $L_1$ depends on $L_0, \varepsilon_0$ and $\varepsilon_1$ depends
      on $L_0, \varepsilon_0, L_1$)

     With such choice of constants, the theorem is proved by combining
      (\ref{finalineqn}), (\ref{j condition 2}), (\ref{KHmax}) and the definition.

\end{proof}

    \section{Canonical neighborhood Lemmas and the proof of the main theorem}\label{section proof of main theorem}

    The goal of this section is to establish the canonical neighborhood Lemmas and then prove Theorem \ref{main theorem}.

    Recall that the translating soliton satisfies the equation $ H = \left<V,\nu\right>$ for some fixed nonzero vector $V$, where $\nu$ is the inward pointing normal vector.
    With a rotation and dilation we may assume that $V=\omega_3$ is a unit vector. The equation then becomes:
\begin{align}\label{translatoreqn1}
  H &= \left<\omega_3,\nu\right>
\end{align}
  $M+t\omega_3$ is an eternal solution to the mean curvature flow . It's useful to consider the parabolic picture. Therefore  we denote $M_t=M+t\omega_3$ to be the mean curvature flow solution associated with $M$.
    Define the height function in the space time to be
\begin{align}\label{height def}
  h(x,t)=\left<x,\omega_3\right>-t
\end{align}

  Throughout this section the mean curvature flow solution is always assumed to be embedded and complete.

\begin{Def}
  For any ancient solution $M_t$ defined on $t<0$, a blow-down limit, or a tangent flow at $-\infty$, is the limit flow of $M^j_t=c_j^{-1}M_{c_j^2t}$ for some sequence $c_j\rightarrow\infty$, if the limit exists.
\end{Def}

    By the previous work 
    (cf \cite[Theorem 1.11]{haslhofer2017mean}, \cite{white2003nature, white2000size}, \cite{sheng2009singularity}, \cite{huisken1999convexity}) : if $M$ is mean convex and noncollapsed ancient solution, then any blow-down sequence $c_j^{-1}M_{c_j^2t}$ has a subsequence that converges smoothly to $S^k_{\sqrt{-2kt}}\times\mathbb{R}^{3-k}$ for some $k=0,1,2,3$ with possibly a rotation. In particular at least one blow-down limit exists.

    The Guassian density of the hypersurface is given by
    \begin{align*}
      \Theta_{x_0,t_0}(M)=\int_{M}\frac{1}{(4\pi t_0)^{\frac{n}{2}}}e^{-\frac{| x-x_0| ^2}{4t_0}}d\mu
    \end{align*}

    By Huisken's monotoncity formula \cite{huisken1990asymptotic}, we have
    the following:

    \begin{Lemma}\label{unique blowdown}
        For any mean convex and noncollapsed ancient solution $M_t$ defined for $t<0$, suppose that $M^{\infty}_t$ is a blow-down limit, then up to
        a rotation $M_t^{\infty}$ is independent of the blow-down sequence.
    \end{Lemma}
    \begin{proof}
      Suppose that $M^{\infty}_t$ is the limiting flow of $M^j_t=c_j^{-1}M_{c_j^2t}$ for some sequence $c_j\rightarrow\infty$. By the previous discussion, $M^{\infty}_t$ must be one of the self-similar generalized cylinders $S^k_{\sqrt{-2kt}}\times\mathbb{R}^{3-k}$.
      Moreover, mean convex ancient solution must be convex by the convexity estimate \cite{huisken1999convexity}
      (also cf \cite[Theorem 1.9]{haslhofer2017mean}). Therefore the convergence is smooth with multiplicity one.
      By Huisken's monotonicity formula \cite{huisken1990asymptotic},
      $\Theta_{x_0,t_0+s_0-t}(M_{t})$  is monotone increasing in $t$.
      Hence \[\Theta = \lim\limits_{t\rightarrow\infty} \Theta_{x_0,t_0+s_0-t}(M_{t})\leq \infty\] exists.

      Since $\Theta$ is scaling invariant we have:
      \begin{align}\label{entropy bound 1}
        \Theta_{c_j^{-1}x_0,c_j^{-2}(t_0+s_0)-t}(c_j^{-1}M_{c_j^2t})=\Theta_{x_0,t_0+s_0-c_j^2t}(M_{c_j^2t})
      \end{align}
      whenever $c_j^2t<s_0$.

      By convexity we have
      \begin{align}\label{Area bound}
         \text{Area}(M_t\cap B_R(0))\leq CR^3
      \end{align} for some uniform constant $C$.

      Taking $t=-1$ in (\ref{entropy bound 1}).
      Since $c_j^{-1}x_0\rightarrow 0, c^{-2}_j(t_0+s_0)-t\rightarrow 1$ and $c_j^{-1}M_{-c_j^2}$ converges smoothly to $M^{\infty}_{-1}$
      as $j\rightarrow \infty$, together with (\ref{Area bound}) and the exponential decay of the Gaussian weight, we have
      \begin{align*}
       \Theta_{0,1}(M^{\infty}_{-1})=\lim\limits_{j\rightarrow\infty}\Theta_{c_j^{-1}x_0,c_j^{-2}(t_0+s_0)-t}(c_j^{-1}M_{c_j^2t})=\Theta
      \end{align*}
      Since $\Theta_{0,1}(S^{k}\times\mathbb{R}^{3-k})$, $k=0,1,...,3$ are all different numbers,  $M_{-1}^{\infty}$ must have the same shape,
      i.e  they must be the same up to rotation.

    \end{proof}
   \begin{Rem}
        By the work of Colding--Minicozzi \cite{colding2015uniqueness}, the blow-down limit is indeed unique without any rotation, but we don't need this strong result here.
      \end{Rem}

    In particular if $M_t$ is a translating solution, we can interpret the blow down process in a single time slice:
    \begin{Cor}\label{blow down for translator}
      Given $M^3\subset\mathbb{R}^{4}$ a strictly mean convex, noncollapsed translator which satisfies (\ref{translatoreqn1}), then for any $R>1, \varepsilon>0$, there exists a large $c_0$ such that, if $a\geq c_0$ then $a^{-1}(M-a^2\omega_3)\cap B_{R}(0))$ is $\varepsilon$ close to  a $S^k_{\sqrt{2k}}\times\mathbb{R}^{3-k} $ with some rotation for a fixed $1\leq k\leq 2$ in $C^{10}$ norm.
    \end{Cor}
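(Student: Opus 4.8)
The plan is to deduce this from Corollary~\ref{unique blowdown} applied to the eternal flow $M_t = M+t\omega_3$, read off at time $t=-1$, and then to upgrade the resulting sequential convergence of blow-downs to the stated uniformity by a compactness argument.

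First I would note that $M_t = M+t\omega_3$ is an eternal, hence ancient, solution which is embedded, complete, strictly mean convex and noncollapsed; by the convexity estimate it is convex, so Corollary~\ref{unique blowdown} applies and yields a single integer $k$ (independent of $a,R,\epsilon$) so that every blow-down limit of $M_t$ equals $S^k_{\sqrt{-2kt}}\times\mathbb{R}^{3-k}$ up to a rotation. Since $H=\langle\omega_3,\nu\rangle>0$ everywhere, $M$ is a convex graph over $\omega_3^\perp\cong\mathbb{R}^3$; after a translation we place its tip at the origin. Then, writing $c_j=a$, the rescaled flow $a^{-1}M_{a^2 t}$ has time $-1$ slice $a^{-1}(M-a^2\omega_3)$, and the target $S^k_{\sqrt{2k}}\times\mathbb{R}^{3-k}$ is exactly the $t=-1$ slice of the limiting flow; it remains to (i) restrict $k$ to $\{1,2\}$ and (ii) make the convergence uniform in $a$ and Euclidean about the origin.

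For (i): entropy is translation invariant, so $\lambda(M_t)\equiv\lambda(M)$, and by Lemma~\ref{entropy limit lemma} this common value equals the entropy of the blow-down slice. If $k=0$ the blow-down is a static hyperplane, whose entropy is $1$; then $\lambda(M)=1$, forcing $M$ to be a hyperplane by the equality case of $\lambda\ge 1$, contradicting $H>0$. If $k=3$ the blow-down is a compact round sphere; but a compact mean convex translator cannot exist (at the two extrema of the height $\langle\cdot,\omega_3\rangle$ the inward normal is $\pm\omega_3$, so $H=\langle\omega_3,\nu\rangle$ takes opposite signs there, contradicting $H>0$), hence $M$ and all its blow-downs are noncompact. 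Thus $1\le k\le 2$.

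For (ii) I would argue by contradiction. If the conclusion fails there are $\epsilon_0>0$, $R_0>1$ and $a_j\to\infty$ so that $a_j^{-1}(M-a_j^2\omega_3)\cap B_{R_0}(0)$ is not $\epsilon_0$-close in $C^2$ to any rotation of $S^k_{\sqrt{2k}}\times\mathbb{R}^{3-k}$. The rescaled flows $M^j_t = a_j^{-1}M_{a_j^2 t}$ are convex, mean convex and noncollapsed with the same (scale invariant) noncollapsing constant, so by White's long-range curvature estimate together with the pointwise curvature-derivative estimates (c.f.\ Haslhofer--Kleiner) they have uniformly bounded curvature and derivatives on sets of bounded parabolic distance from the origin after renormalization; Arzel\`a--Ascoli then extracts a subsequence converging smoothly on compact subsets of Euclidean space-time to a blow-down limit of $M_t$, which by Corollary~\ref{unique blowdown} is a rotation of $S^k_{\sqrt{-2kt}}\times\mathbb{R}^{3-k}$. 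Restricting to $t=-1$ contradicts the choice of $a_j$. I expect the main obstacle to be exactly the Euclidean, origin-centered nature of this convergence, i.e.\ that a rotation alone (with no translation) suffices: this is where the translator structure is essential, since with the tip of the convex graph at the origin the slices of $M$ at height $\langle\cdot,\omega_3\rangle=a^2$ project to convex bodies in $\mathbb{R}^3$ containing $0$ whose $a^{-1}$-rescalings stay at bounded distance from $0$, which keeps the chosen base point on the core of the limiting cylinder and makes the curvature estimates applicable at the origin, so the axis $\mathbb{R}^{3-k}$ of the limit passes near $0$.
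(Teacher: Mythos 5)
Your proposal is correct, and since the paper leaves this corollary unproved (it is presented as a direct reformulation of Corollary~\ref{unique blowdown} for the eternal flow $M_t=M+t\omega_3$, read off at time $-1$), your derivation supplies essentially the argument the paper intends; the elimination of $k\in\{0,3\}$ via entropy and noncompactness and the contradiction/compactness upgrade to uniformity are standard.

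One clarification on the point you flag at the end as the ``main obstacle.'' The fact that the limiting cylinder can be taken with axis through the origin (rotation only, no translation) does not really come from a tip-at-origin normalization of the convex graph, and your height-slice heuristic is not quite a proof that the rescaled surfaces stay at bounded distance from $0$. The clean reason is the self-shrinker equation together with Lemma~\ref{entropy limit lemma}: any subsequential blow-down limit centered at the chosen origin satisfies $H=\tfrac12\langle x,\nu\rangle$ with $x$ measured from that origin, and the only mean convex noncollapsed solutions are generalized cylinders whose $\mathbb{R}^{3-k}$-factor passes through the origin; moreover the proof of Lemma~\ref{entropy limit lemma} actually shows $F_{0,1}(M^\infty_{-1})=\lim_{s\to-\infty}\lambda(M_s)=\lambda(M)>1$ (the last equality by translation invariance of entropy along a soliton, the inequality since a strictly mean convex $M$ is not a hyperplane), so the limit is genuinely nontrivial in a fixed ball about $0$ and the local smooth convergence is Euclidean there. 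In particular no special placement of the origin is required, which is consistent with the corollary being invoked later (in Lemma~\ref{canonical nbhd lemma}) with the origin at an arbitrary fixed $p\in M$ rather than at the tip.
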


    In the following we prove some canonical neighborhood Lemmas.
    \begin{Lemma}\label{canonical nbhd lemma}
      Let $M^3\subset\mathbb{R}^4$  be a  noncollapsed, convex, smooth translating soliton of mean curvature flow.\  \ $p\in M$ is a fixed point. Set $M_t$ to be the associated translating solution of the mean curvature flow. Suppose that one blow-down limit of $M_t$ is the shrinking $S^1\times\mathbb{R}^2$ and that the sub-level set $M_t\cap\{h(\cdot,t)\leq h_0\}$ is compact for any $h_0\in\mathbb{R}$.
      Given  $L>10, \varepsilon>0$,  there exist a constant $\Lambda$ with the following property:
      If $x\in M$ satisfies $| x-p| \geq\Lambda$, then after a parabolic rescaling by the factor  $H(x,t)$,
     the parabolic neighborhood $\hat{\mathcal{P}}(x,t,L,L^2)$ is $\varepsilon$ close to the corresponding piece of the shrinking $S^{1}\times \mathbb{R}^2$, or the translating $\text{Bowl}^{2}\times \mathbb{R}$.

    \end{Lemma}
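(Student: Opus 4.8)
The plan is to argue by contradiction via a rescaling/blow-up argument, combined with the blow-down analysis of Corollaries~\ref{unique blowdown} and \ref{blow down for translator}. Suppose the assertion fails for some $L>10$, $\epsilon>0$; then there is a sequence $x_j\in M$ with $|x_j-p|\to\infty$ for which $\hat{\mathcal{P}}(x_j,0,L,L^2)$, after the parabolic rescaling making $H(x_j)=1$, is never $\epsilon$-close to the corresponding piece of a shrinking $S^1\times\mathbb{R}^2$ or of the translating $\mathrm{Bowl}^2\times\mathbb{R}$. First I would observe that $\langle x_j,\omega_3\rangle\to+\infty$: otherwise a subsequence of the $x_j$ lies in a fixed compact sub-level set $M\cap\{\langle\cdot,\omega_3\rangle\le h_0\}$, contradicting $|x_j-p|\to\infty$. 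Hence $x_j$ escapes to infinity through the end of $M$, and $H(x_j)\le 1$ since $H=\langle\omega_3,\nu\rangle$.

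Next I would set $H_j:=H(x_j)$ and let $M^j_t$ be the flow obtained from $M_t$ by translating $x_j$ to the origin and parabolically rescaling so that $H=1$ there at time $0$; these are mean curvature flows defined for all $t\in\mathbb{R}$, convex and $\alpha$-noncollapsed with the same $\alpha$ as $M$ (convexity from the convexity estimate, \cite{huisken1999convexity},\cite{haslhofer2017mean}). White's long-range curvature estimate together with the pointwise derivative-of-curvature estimates \cite{white2000size},\cite{white2003nature} (c.f.\ \cite{haslhofer2017mean}) give, for every $R>1$, uniform $C^k$ bounds for the $M^j_t$ on $\hat{\mathcal{P}}(0,0,R,R^2)$ independent of $j$; hence a subsequence converges smoothly, in the Euclidean space-time sense, to a limit flow $M^\infty_t$ which is convex, $\alpha$-noncollapsed, eternal (hence ancient), and nonflat because $H=1$ at the origin at time $0$.

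The heart of the argument is to show that $M^\infty_t$ splits off a line. Here I would use that every blow-down of $M$ equals $S^1\times\mathbb{R}^2$ (Corollary~\ref{unique blowdown}, together with the hypothesis and Lemma~\ref{entropy limit lemma}) and that $M_t\cap\{h\le h_0\}$ is compact, to produce a complete line in $M^\infty_t$: $M$ has two asymptotically Euclidean directions at the blow-down scale, and as $x_j\to\infty$ at least one of them survives in the rescaled limit (noncollapsing and convexity prevent it from degenerating; and when $H_j\to 0$ the unit vector $\omega_3$ becomes tangent to $M^\infty$, which already forces the splitting). Once a line is present, the smallest principal curvature $\lambda_1$ of $M^\infty$ vanishes identically on a time slice, so Hamilton's strong maximum principle for the second fundamental form \cite{hamilton1986four} (c.f.\ \cite{white2003nature},\cite{haslhofer2017mean}) forces $M^\infty_t=\mathbb{R}\times N_t$ with $N_t$ a convex, $\alpha$-noncollapsed, eternal solution in $\mathbb{R}^3$. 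Since compact convex ancient solutions in $\mathbb{R}^3$ (shrinking spheres and ancient ovals) are not eternal — this is where the ancientness and the blow-down analysis enter — $N_t$ must be noncompact, hence by Brendle--Choi \cite{brendle2019uniqueness},\cite{brendle2018uniqueness} it is the shrinking cylinder $S^1\times\mathbb{R}$ or the $\mathrm{Bowl}^2$ soliton; therefore $M^\infty_t$ is the shrinking $S^1\times\mathbb{R}^2$ or the translating $\mathrm{Bowl}^2\times\mathbb{R}$. The smooth convergence $M^j_t\to M^\infty_t$ then makes $\hat{\mathcal{P}}(x_j,0,L,L^2)$ $\epsilon$-close to the corresponding piece of one of these two models for $j$ large, contradicting the choice of $x_j$; the desired $\Lambda$ is the threshold produced by this argument.

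The main obstacle is the production of the line in $M^\infty$ and its reconciliation with the natural scale $H(x_j)^{-1}$: this scale may be far smaller than the blow-down scale when $x_j$ lies near a $\mathrm{Bowl}\times\mathbb{R}$-type ``wing'', so one cannot simply read off the $S^1\times\mathbb{R}^2$ model at scale $H(x_j)^{-1}$; instead the flatness of the Euclidean directions has to be transported down to that scale, where it is then detected by the strong maximum principle. Ruling out the compact factor for $N_t$ is the second delicate point, and is where both the ancientness (via eternity of $M^\infty$) and the blow-down analysis of $M$ are used.
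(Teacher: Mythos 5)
Your overall blueprint matches the paper's: argue by contradiction, rescale at $x_j$, show the limit splits off a line, invoke Brendle--Choi \cite{brendle2019uniqueness,brendle2018uniqueness} on the noncompact cross-section, and derive the contradiction. But there are two places where you have identified the difficulty correctly and then left it unresolved, and in both the paper does something you didn't.

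\textbf{Producing the line.} Your explanation --- ``$M$ has two asymptotically Euclidean directions at the blow-down scale, and at least one of them survives in the rescaled limit; when $H_j\to0$ the unit vector $\omega_3$ becomes tangent to $M^\infty$, which already forces the splitting'' --- is not a proof. First, $H(x_j)\to 0$ is not established and need not hold along the whole sequence. Second, even granting tangency of $\omega_3$ at the origin, a convex hypersurface with a tangent direction at one point need not contain a line in that direction (a paraboloid does not), so tangency alone does not ``force the splitting.'' The paper instead passes to a subsequence with $|p-x_{j+1}|\ge 2|p-x_j|$ and uses a purely Euclidean triangle/sine-law computation to show $\angle x_{j+1}x_jp\to\pi$; since $H(x_j)|x_j-p|\to\infty$ by White's long-range estimate, both $p^{(j)}$ and the rescaled image of $x_{j+1}$ escape to infinity in opposite directions, producing the line in the limit. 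That geometric step is missing from your argument.

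\textbf{Ruling out the compact cross-section.} You rule out compact $N_t$ by claiming $M^\infty_t$ is eternal. This is where the gap is most serious: White's curvature estimates for mean convex ancient solutions control the curvature only backward in time from a base point, so the smooth subsequential convergence $M^{(j)}_t\to M^\infty_t$ is guaranteed only for $t\le 0$. The rescaled translators do each flow for all $t$, but their global curvature bound $H\le H(x_j)^{-1}H_{\max}$ degenerates as $H(x_j)\to 0$, and there is no forward-in-time curvature estimate to close the argument. Hence eternity of the limit is not free, and ``compact $\Rightarrow$ finite extinction $\Rightarrow$ contradiction'' does not yet apply. The paper proceeds differently: it first uses mean convexity (the sign of $\left<\nu,\omega_3\right>$, preserved under rescaling) to force the line $l$ to be the $\omega_3$-axis, so that the cross-section $M_0^{(j)}\cap\{h=0\}$ converges to the compact surface $M_0'$; it then compares the scale-invariant eccentricity $\mathrm{ecc}(N_j)=\mathrm{diam}(N_j)\sup H_{N_j}$ of the unrescaled slices $N_j=M\cap\{h=h_j\}$ with the eccentricity forced by the blow-down being $S^1\times\mathbb{R}^2$ (via Corollary~\ref{blow down for translator}): the former stays bounded because $N_j$ converges to $M_0'$ up to scale, while the latter grows like $R$, giving the contradiction. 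This is the place where the hypothesis ``one blow down limit of $M_t$ is $S^1\times\mathbb{R}^2$'' actually enters; in your sketch it enters only through an appeal to the (unjustified) eternity of the limit, so the role of that hypothesis is not used correctly.

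To repair your proposal you would need (i) a concrete mechanism to produce the line --- the distance-doubling/angle argument works; and (ii) an argument for the compact case that does not presume control of the limit flow for $t>0$ --- the paper's eccentricity comparison against the $S^1\times\mathbb{R}^2$ blow-down is one such.
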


    \begin{proof}
      If $M$ is not strictly mean convex, then by the maximum principle $M$ must be flat plane, therefore all the blow-down limit is flat plane, a contradiction.

      Without loss of generality we assume that $p$ is the origin, and that $\omega_3$ is the unit vector in $x_3$ axis which points to the positive part of $x_3$ axis.

    Argue by contradiction. Suppose that the conclusion is not true, then there exist a sequence of points $x_j\in M$ satisfying $| x_j-p| \geq j$ but $\hat{\mathcal{P}}(x_j,0,L,L^2)$ is not $\varepsilon$ close to either one of the models after the appropriate parabolic rescaling.

  By the long range curvature estimate (ie bounded rescaled distance implies bounded rescaled curvature, cf \cite{white2000size,white2003nature}, \cite[Corollary 3.2]{haslhofer2017mean},  $H(p)| x_j-p| \rightarrow \infty$ implies
  \begin{align}\label{long range curvature estimate}
    H(x_j)| x_j-p| \rightarrow \infty
  \end{align}

  After passing to a subsequence, we may assume
  \begin{align}\label{distancedoubling}
    | p-x_{j+1}| \geq 2| p-x_j|
  \end{align}

  Now let $M^{(j)}_t=H(x_j)\Big(M_{H^{-2}(x_j)t}-x_j\Big)$ be a sequence of rescaled solutions. Under this rescaling $M^{(j)}_t$ remains eternal, $x_j$ is sent to the origin and $H_{M_0^{(j)}}(0)=1$.
  By the global convergence theorem (cf \cite[Theorem 1.10]{haslhofer2017mean})  $M^{(j)}_t$ converges to an ancient solution $M^{\infty}_t$ which is  smooth,  non-collapsed, weakly convex, and $H_{M^{\infty}_0}(0)=1$. Thus $M^{\infty}_0$ is strictly mean convex.
  Denote by $K^{\infty}$ the convex domain bounded by $M^{\infty}$.

  Suppose that $p$ is sent to $p^{(j)}$ in the $j^{th}$ rescaling. Then  $p^{(j)}=-H(x_j)x_j$ and $| p^{(j)}| \rightarrow \infty$ by (\ref{long range curvature estimate}).
  After passing to a subsequence, we may assume that $\frac{p^{(j)}}{|
  p^{(j)}| }=-\frac{x_j}{| x_j| }\rightarrow \Theta \in S^3$.
  Suppose that $l$ is the line in the direction of $\Theta$, i.e. $l=\{s\Theta\ |  \ s\in\mathbb{R}\}$.

  Since rescaling doesn't change angle, we have $\angle x_jpx_{j+1}\rightarrow 0$. \\

  \noindent\textbf{Claim: } $\angle x_{j+1}x_jp\rightarrow \pi$

   To see this, we first compute $| x_j-x_{j+1}| $:
  \begin{align*}
    | x_j-x_{j+1}| ^2=| x_j-p| ^2+| x_{j+1}-p| ^2-2| x_j-p|  | x_{j+1}-p| \cos(\angle x_jpx_{j+1})
  \end{align*}
  note that $1/2\leq \cos(\angle x_jpx_{j+1})\leq 1$ for large $j$, together with the (\ref{distancedoubling}) we have
  \begin{align}\label{edgerelation1}
    \frac{1}{2}| x_{j+1}-p| <| x_{j+1}-p| -| x_j-p| <| x_j-x_{j+1}|  < | x_{j+1}-p|
  \end{align}
  Therefore $| px_{j+1}| $ is the largest edge in the triangle $px_jx_{j+1}$,  consequently $\angle px_jx_{j+1}$ is the largest and is no less than $\pi/3$. On the other hand we can use the Sine law in the triangle:
  \begin{align*}
    \frac{\sin(\angle x_{j+1}x_jp)}{| x_{j+1}-p|
    }=\frac{\sin(\angle x_jpx_{j+1})}{| x_j-x_{j+1}| }
  \end{align*}
  Together with (\ref{edgerelation1}) we will have  $\sin(\angle x_{j+1}x_jp)\leq \sin(\angle x_jpx_{j+1})\rightarrow 0$. Therefore $\angle x_{j+1}x_jp\rightarrow \pi$. Hence the claim is proved.

  Looking back to the rescaled picture, $x_{j+1}$ is sent to $H(x_j)(x_{j+1}-x_j)$. By (\ref{edgerelation1}) $H(x_j)| x_{j+1}-x_j| \geq H(x_j)| x_j-p| =|
  p^{(j)}| \rightarrow \infty$.
  Together the angle convergence, the limit contains the  line $l$. Thus it can't be strictly convex.
  By the strong maximum principle (cf \cite[Section 4]{hamilton1986four}, \cite[Appendix]{white2003nature}), \cite[Appendix A]{haslhofer2017mean}
   $M_t^{\infty}$ split off a line. That is,
  $M ^{\infty}_t=\mathbb{R}\times M'_t$ where $\mathbb{R}$ is in the direction of $l$ and $M'_t$ is an ancient solution that is non-collapsed, strictly mean convex, uniformly 2-convex. \\

  \noindent\textbf{Case 1: } If $M'_t$ is noncompact, then by the classification result of Brendle--Choi \cite{brendle2019uniqueness}, we conclude that, up to translation and rotation,  $M'_t$ is  the shrinking $S^1\times\mathbb{R}$, or the translating $\text{Bowl}^{2}$.
    This means that for large $j$, the parabolic neighborhood $\hat{\mathcal{P}}^{(j)}(0,0,L,L^2)$ is $\varepsilon$ close to either $S^1\times\mathbb{R}^2$ or $\text{Bowl}^2\times\mathbb{R}$, a contradiction.\\

    \noindent\textbf{Case 2: } If $M'_t$ is compact,  then $l$ is the $x_3$ axis, consequently $\frac{x_j}{| x_j| }$ converges to $\omega_3$.
    In fact, let's denote  by $l^{\perp}$ the plane perpendicular to $l$ that passes through the origin. (Note that $l$ also passes through the origin by definition).
    Hence $l^{\perp}\cap M_t^{\infty}$ is isometric to $M_t'$.
    Suppose that $l$ is not the $x_3$ axis, then there is a unit vector $v\perp l$ such that $\left<v,\omega_3\right><0$.
    Since $M_t^{\infty}$ splits off in the direction $l$ and  each cross section is a closed surface $M_t'$, we can find a point $y_j'\in M_0^{\infty}\cap l^{\perp}$ such that the inward normal vector $\nu$ of $M_0^{\infty}$ at $y_j'$ coincide with $v$.
    By the convergence, for large $j$ there is a point $y_j\in M_0^{(j)}\cap l^{\perp}$ such that the unit inward normal $\nu_{M^{(j)}_0}(y_j)$ is sufficiently close to $v$.  But this implies that $H_{M^{(j)}_0}(y_j)=H_{M_0}(x_j)^{-1}\left<\nu_{M^{(j)}_0}(y_j),\omega_3\right><0$ a contradiction.

    We know by convergence that, for each $R>0$,   $M^{(j)}_0$ is close to $\mathbb{R}\times M_0'$ in the ball $B_R(0)$ whenever $j$ is large. By the above argument we know that the $\mathbb{R}$ direction is parallel to $\omega_3$.
    Hence the cross section perpendicular to the $\mathbb{R}$ factor are the level set of the height function.
    In particular we can choose $R\gg \text{diam}(M_0')$ and consequently the cross section $M_0^{(j)}\cap \{h(\cdot,0)=0\}$  converges smoothly to the cross section $(\mathbb{R}\times M_0')\cap \{h(\cdot,0)=0\}$, which is $M_0'$.

     Now let's convert it to the unrescaled picture. Let $h_j=h(x_j,0)$ and $N_j=M\cap \{h(\cdot,0)=h_j\}$ ($N_j$ should be considered as  a hypersurface in $\mathbb{R}^3$). Then $N_j$, after appropriate rescaling, converges to $M_0'$.

    Define a scale invariant quantity for hypersurface(possibly with boundary):
    \begin{align*}
      \text{ecc}(N_j)=\text{diam}(N_j)\sup H_{N_j}
    \end{align*}
     where diam denotes the extrinsic diameter and $H_{N_j}$ is the mean curvature of $N_j$ in $\mathbb{R}^3$. Clearly ecc is monotone in the sense that $\text{ecc}(N)\geq \text{ecc}(N')$ whenever $N'\subset N$.
    By convergence, $\text{ecc}(N_j)\rightarrow\text{ecc}(M_0')$, in particular:
    \begin{align}\label{ecc M0'}
      \text{ecc}(N_j)< 2\text{ecc}(M_0')
    \end{align}
    for all large $j$.

    On the other hand, by assumption each sub-level set $\{h(\cdot, 0)\leq h_0\}$ is compact, this means $h_j\rightarrow+\infty$.
    By Corollary \ref{blow down for translator},
    for any $\eta>0, R>1$ there exists $A_j\in SO(4)$ for each sufficiently large $j$ such that $\sqrt{h_j}^{-1}(M_0-h_j\omega_3)\cap B_R(0)$ is $\eta$ close to $\Sigma_j:=A_j(S^1_{\sqrt{2}}\times\mathbb{R}^2)$, which is a rotation of $S^1_{\sqrt{2}}\times\mathbb{R}^2$.
    Since $\left<\nu,\omega_3\right>>0$ on $\sqrt{h_j}^{-1}(M_0-h_j\omega_3)\cap B_R(0)$, by approximation $\left<\nu,\omega_3\right>>-C\eta$ on $\Sigma_j\cap B_R(0)$.
    If $R\gg\sqrt{2}$ then we are able to take antipodal point of $S^1$ and thus $| \left<\nu,\omega_3\right>| <C\eta$ on $\Sigma_j\cap B_R(0)$. This means the $\mathbb{R}^2$ factor of $\Sigma_j$ must be almost perpendicular to the level set of the height function $h$.
    Consequently, the intersection of $\Sigma_j\cap B_R(0)$ with $\{h(\cdot,0)=0\}$ must be $C\eta$ close to some rotated $(S^1_{\sqrt{2}}\times\mathbb{R})\cap B_R(0)$, hence the mean curvature (computed in $\mathbb{R}^3$) is at least $\frac{1}{\sqrt{2}}-C\eta$. (See Appendix for more detailed explanation).

    Now let $\tilde{N}_j=\sqrt{h_j}^{-1}(M_0-h_j\omega_3)\cap\{h(\cdot,0)=0\}\cap B_R(0)$. Then $\tilde{N}_j$, considered as surface in $\mathbb{R}^3$, is $C\eta$ close to $\Sigma_j\cap\{h(\cdot,0)=0\}\cap B_R(0)$. Therefore
    \begin{align}\label{ecc cylinder}
      \text{ecc}(\tilde{N}_j)\geq (\frac{1}{\sqrt{2}}-C\eta)R
    \end{align}

    Now we take $\eta<\frac{1}{8C}$, $R>4\text{ecc}(M_0')$ and $j$ large accordingly. Note that $\tilde{N}_j=\sqrt{h_j}^{-1}(N_j-h_j\omega_3)$, then the scale invariance of ecc together with (\ref{ecc M0'}) and (\ref{ecc cylinder})  lead to a contradiction.

    \end{proof}

    \begin{Lemma}\label{blowdownnecklemma}
      Suppose that $M^3\subset\mathbb{R}^4$ is  noncollapsed, strictly convex translating soliton of mean curvature flow.
      Set $M_t$ to be the associated translating solution. Suppose that one blow-down limit of $M_t$ is the shrinking $S^2\times\mathbb{R}$  and that the sub-level set $M_t\cap\{h(\cdot,t)\leq h_0\}$ is compact for any $h_0\in\mathbb{R}$. Then $M=\text{Bowl}^3$.
    \end{Lemma}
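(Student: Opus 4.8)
The plan is to upgrade the hypothesis to uniform $2$-convexity and then invoke the classification of noncompact, noncollapsed, uniformly $2$-convex ancient solutions due to Brendle and Choi \cite{brendle2019uniqueness}, \cite{brendle2018uniqueness}.

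First I would dispose of two preliminaries. If $M$ were not strictly mean convex, Hamilton's strong maximum principle (c.f.\ \cite{hamilton1986four}, \cite{white2003nature}, \cite{haslhofer2017mean}) would force $M$ to be a flat hyperplane, whose blow down is $\mathbb{R}^3$ rather than $S^2\times\mathbb{R}$; hence $H>0$ on $M$, and then the convexity estimate \cite{huisken1999convexity} (c.f.\ \cite{haslhofer2017mean}) gives that $M$ is convex, with principal curvatures $0\le\lambda_1\le\lambda_2\le\lambda_3$. Moreover, by Lemma \ref{entropy limit lemma}, Corollary \ref{unique blowdown}, and the translation invariance of the entropy one has $\lambda(M)=\lambda(S^2\times\mathbb{R})=\lambda(S^2)$, which by Stone's computation (c.f.\ \cite{colding2012generic}) is strictly smaller than $\lambda(S^1)$.

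The main step is to show $\lambda_1+\lambda_2\ge\beta H$ on $M$ for some $\beta>0$. I would first check that $\lambda_1+\lambda_2>0$ everywhere on $M$: if $\lambda_1(x_0)+\lambda_2(x_0)=0$, then $\lambda_1(x_0)=\lambda_2(x_0)=0$ by convexity, and Hamilton's strong maximum principle applied to the evolution of the second fundamental form forces $M_t$ to split off a flat $\mathbb{R}^2$, so $M=\mathbb{R}^2\times N$ for a complete convex ancient curve shortening flow $N$; since $M$ is noncollapsed, $N$ is noncollapsed, hence a line or a round shrinking circle, and strict mean convexity excludes the line, so $M=\mathbb{R}^2\times S^1$, which is a shrinking cylinder and not a translating soliton, a contradiction. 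Now suppose $M$ is not uniformly $2$-convex, i.e.\ $\inf_M\frac{\lambda_1+\lambda_2}{H}=0$, and pick $x_j\in M$ with $\frac{\lambda_1+\lambda_2}{H}(x_j)\to 0$. By the positivity just proved no subsequence of $(x_j)$ converges, so $|x_j|\to\infty$, and the compactness of the sublevel sets forces $h(x_j,0)\to+\infty$. Rescaling by $H(x_j)$ and invoking White's long range curvature estimates exactly as in the proof of Lemma \ref{canonical nbhd lemma} produces a complete, convex, noncollapsed ancient limit $M^\infty_t$ with $H_{M^\infty_0}(0)=1$ on which $\frac{\lambda_1+\lambda_2}{H}$ attains the value $0$ at the origin. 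The strong maximum principle then forces $M^\infty_t=\mathbb{R}^2\times N$ with $N$ a noncollapsed convex ancient curve shortening flow, hence a line or a round shrinking circle; since $H_{M^\infty_0}(0)=1$ rules out the line, $M^\infty_t=S^1\times\mathbb{R}^2$, so $\lambda(M^\infty_0)=\lambda(S^1)>\lambda(S^2)=\lambda(M)$, contradicting the monotonicity $\lambda(M^\infty_0)\le\lambda(M)$. Hence $\beta>0$.

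Finally, $M_t$ is a complete, noncompact, convex, noncollapsed, uniformly $2$-convex ancient solution; if it were not strictly convex, Hamilton's maximum principle \cite{hamilton1986four} would make it a product of a line with a uniformly convex ancient solution, hence the shrinking cylinder $S^2\times\mathbb{R}$ by \cite{huisken1984flow}, which is not a translating soliton; so $M$ is strictly convex, and by Brendle and Choi \cite{brendle2019uniqueness}, \cite{brendle2018uniqueness} it must be $\text{Bowl}^3$. I expect the main obstacle to be the uniform $2$-convexity step: carrying out the two applications of the strong maximum principle (the splitting of $M_t$ and of the rescaled limit $M^\infty_t$), identifying $M^\infty_t$ using the noncollapsing hypothesis together with the entropy ordering, and passing to the rescaled limit via the long range curvature estimates as in Lemma \ref{canonical nbhd lemma}; the first and last paragraphs are bookkeeping on top of the cited results.
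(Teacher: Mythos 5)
Your proof is correct in its core and takes a genuinely different route from the paper. The paper works directly with the blow-down geometry: using the argument from Case 2 of Lemma \ref{canonical nbhd lemma}, it shows that for large heights $a$ the portion of $M$ near $a\,\omega_3$ is close to $S^2_{\sqrt{2a}}\times\mathbb{R}$, and since $\left<\nu,\omega_3\right>>0$ the $\mathbb{R}$ factor must be nearly parallel to $\omega_3$; hence the cross sections $\{h=a\}$ are close to round spheres, giving uniform $2$-convexity on $\{h\ge A\}$. Combined with compactness of the sub-level sets this gives uniform $2$-convexity everywhere, and then Brendle--Choi finishes. You instead argue by contradiction: if uniform $2$-convexity failed, you rescale at points where $(\lambda_1+\lambda_2)/H\to 0$, obtain a limit that splits off $\mathbb{R}^2$ by Hamilton's strong maximum principle, identify it as $S^1\times\mathbb{R}^2$, and then contradict the entropy computation $\lambda(M)=\lambda(S^2)<\lambda(S^1)$, using translation/scaling invariance of entropy together with its lower semicontinuity under the smooth rescaled convergence. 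Your approach trades the explicit geometric tracking of the axis direction for the soft entropy machinery (Lemma \ref{entropy limit lemma}, Colding--Minicozzi). Both are valid; the paper's argument is more self-contained with Lemma \ref{canonical nbhd lemma}, while yours is closer in spirit to the commented-out alternative proof in the source and may generalize more cleanly when a sharp geometric decomposition near the axis is unavailable.

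One imprecision worth fixing in your final paragraph: if $M$ is not strictly convex it splits off a line, $M=\mathbb{R}\times N$, and it does not immediately follow from \cite{huisken1984flow} that $N$ is a shrinking sphere. Your uniform $2$-convexity does force $N$ to be uniformly convex; the cleanest way to conclude is to note that a translator of the form $\mathbb{R}\times N$ must translate in the $\mathbb{R}$-direction (a compact factor $N$ cannot itself be a translator), which forces $H=\left<\omega_3,\nu\right>=0$, contradicting strict mean convexity. Alternatively one can observe that the only noncompact uniformly $2$-convex factor is $\text{Bowl}^2$, whose product with $\mathbb{R}$ fails uniform $2$-convexity at infinity, so it is already excluded by your main step; either fix closes the gap.
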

    \begin{proof}
    Let's fix $p\in M$. Assume without loss of generality  that $p$ is the origin and  $\omega_3$ is the unit vector in $x_3$ axis which points to the positive part of $x_3$ axis.

    Using 
    the argument from the Case 2 in Lemma \ref{canonical nbhd lemma}, the ball $B_{100\sqrt{a}}(a\omega_3)\cap M$ is close a $S^2_{\sqrt{2a}}\times\mathbb{R} $ with some translation and rotation for all large $a$.
    Moreover, the fact that $\left<\nu, \omega_3\right> >0$ and approximation imply that the $\mathbb{R}$  direction must be almost parallel to the $x_3$ axis, and the cross section $\{h(\cdot,0)=a\}$ is close to  $S^2$ after some scaling, for all sufficiently large $a$.
    In particular, this means that $M\cap \{h(\cdot,0)\geq A\}$ is uniformly 2-convex for a fixed large number $A$.
    Together with the assumption that the sub-level set $\{h(\cdot, 0)\leq A\}$ is compact and the assumption that $M$ is strictly convex, we know that $M$ is uniformly 2-convex. Clearly $M$ is noncompact, thus the classification result of Haslhofer \cite{haslhofer2015uniqueness} or Brendle--Choi \cite{brendle2019uniqueness} applies and $M=\text{Bowl}^3$.

    \end{proof}

\begin{proof}[Proof of Theorem \ref{main theorem}]

      Let $M_t=M+t\omega_3$ be the associated translating mean curvature flow solution. Denote by $g$ the metric on $M$ and $g_t$ the metric on $M_t$.

      If $M$ is not strictly mean convex, then by strong maximum principle mean curvature vanishes everywhere, therefore $M$ is flat. The conclusion then follows trivially.
      If $M$ is strictly mean convex but is not strictly convex, then by maximum principle \cite{hamilton1986four} it must split off a line, namely, it is a product of a line and a convex, uniformly 2-convex, non-collapsed translating solution (which must be noncompact). By the classification result  \cite{haslhofer2015uniqueness} or \cite{brendle2019uniqueness} , $M=$Bowl$^2\times\mathbb{R}$  and we are done.

      We may then assume that $M$ (or $M_t$) is strictly convex. It suffices to find a normalized rotation vector field that is tangential to $M$.

      Suppose that $M_t$ bounds the open domain $K_t=K+t\omega_3$ and attains maximal mean curvature at the tip $p_t=p+t\omega_3$.
      After some rotation and  translation we assume that $p$ is the origin and $\omega_3$ is the unit vector in $x_3$ axis which points to the positive part of $x_3$ axis.

      As in  \cite[page 2398]{haslhofer2015uniqueness}, we take the gradient of (\ref{translatoreqn1}) at the point $p$:
\begin{align*}
  \nabla H &= A(\omega_3^{T})
\end{align*}
where $A$ is the shape operator.

Since $H$ attains maximum at $p$, $\nabla H=0$. By strict convexity $A$ is non-degenerate, so $\nabla H=A(\omega_3^T)=0 \Rightarrow \omega_3^T=0$. Hence $\vec{H}=\omega_3$. In particular $\nu(p)=\omega_3$ and $H(p)=1$, thus $p_t$ is indeed the trajectory that moves by mean curvature.

      By the convexity, $K$ is contained  in the upper half space $\{x_3\geq 0\}$. The strict convexity implies that there is a cone $\mathcal{C}_{\eta}=\{x\in\mathbb{R}^4|  x_3\geq \eta\sqrt{x_1^2+x_2^2+x_4^2}\}$ ($0<\eta<1$) such that
      \begin{align}\label{m contained in cone}
        K_t\backslash B_{1}(p)\subset \mathcal{C}_{\eta}\backslash B_{1}(p)
      \end{align}

      The height function  $h(x,t)=\left<x,\omega_3\right>-t$ \ then measures the signed distance from the support plane $\{x_3=t\}$ at the tip.
      (\ref{m contained in cone}) implies that any sub-level set $\{h(\cdot, 0)\leq h_0 \}$ is compact.

       By Corollary \ref{unique blowdown}, a blow-down limit of $M_t$ exists and each of possible blow-down limit must be  $S^{k}_{\sqrt{-2kt}}\times\mathbb{R}^{3-k}$ up to rotation,
       for a fixed $k=0,1,2,3$. If $k=3$, then $M$ is compact, thus can't be a translator. If $k=0$, by Huisken's monotonicity formula \cite{huisken1990asymptotic} $M$ must be a flat plane, thus the result follows trivially.
      If $k=2$, then by Lemma \ref{blowdownnecklemma}, $M$ is Bowl$^3$, and the result follows immediately.
      So from now on we assume that  $k=1$ and Lemma \ref{canonical nbhd lemma} is applicable.

      By Lemma \ref{canonical nbhd lemma}, there exists $\Lambda'$ depending on $L_1,\varepsilon_1$ given in the Theorem \ref{bowl x R improvement} such that if $(x,t)\in M_t$ satisfies $h(x,t)\geq \Lambda'$, then $\hat{\mathcal{P}}(x,t,L_1,L_1^2)$ is $\varepsilon_1$ close to either a piece of translating Bowl$^2\times\mathbb{R}$ or a family of shrinking cylinder $S^1\times\mathbb{R}^2$ after rescaling.

      By the equation (\ref{translatoreqn1}), $h$ is nonincreasing. In fact, $\partial_t h(x,t)=\left<\nu,\omega_3\right>-1\leq 0$.

      Using (\ref{m contained in cone}) we have $h(x,t)\geq \frac{\eta}{2}| x-p_t| $ whenever $| x-p_t| \geq 1$. Also it's obvious that $h(x,t)\leq| x-p_t| $.

     By the long range curvature estimate (cf \cite{white2000size, white2003nature}, \cite[Corollary 3.2]{haslhofer2017mean}) there exists  $\Lambda>\Lambda'$ such that
     $H(x,t)| x-p_t| >2\cdot10^3\eta^{-1}L_1$ for all $(x,t)$ satisfying $h(x,t)>\Lambda$. Consequently $H(x,t)h(x,t)>10^3L_1$.

     Define

     $\Omega_j=\{(x,t)\ | \ x\in M_t,\ t\in[-2^{\frac{j}{100}}, 0],\ h(x,t)\leq2^{\frac{j}{100}}\Lambda \}$

     $\partial^{1} \Omega_j = \{(x,t)\ | \ x\in M_t,\ t\in[-2^{\frac{j}{100}}, 0],\ h(x,t)=2^{\frac{j}{100}}\Lambda \}$

     $\partial^{2} \Omega_j = \{(x,t)\in\partial\Omega_j \ | \ t=-2^{\frac{j}{25}} \}$

      \noindent\textbf{Step 1:} If $x\in M_t$ satisfies $h(x,t)\geq 2^{\frac{j}{100}}\Lambda$, then $(x,t)$ is $2^{-j}\varepsilon_1$ symmetric.

      When $j=0$ the statement is true by the choice of $\Lambda', \Lambda$. If the statement is true for $j-1$. Given $x\in M_t$ satisfying $h(x,t)\geq 2^{\frac{j}{100}}\Lambda$, the choice of $\Lambda$ ensures that
      $L_1H(x,t)^{-1}<10^{-3}h(x,t)$. So every point in the geodesic ball $B_{g_t}(x,L_1H(x)^{-1})$ has height at least $(1-10^{-3})h(x,t)\geq 2^{\frac{j-1}{100}}\Lambda$. Moreover, the height function is nonincreasing in time $t$, so we conclude that  the parabolic neighborhood $\hat{\mathcal{P}}(x,t,L_1,L_1^2)$ is contained in the set $\{(x,t) |  x\in M_t, h(x,t)\geq 2^{\frac{j-1}{100}}\Lambda\}$. In particular every point in
      $\hat{\mathcal{P}}(x,t,L_1,L_1^2)$ is $2^{-j+1}\varepsilon_1$ symmetric by induction hypothesis.
      Now we can apply either Lemma \ref{Cylindrical improvement} or Lemma \ref{bowl x R improvement} to obtain that $(x,t)$ is $2^{-j}\varepsilon$ symmetric.\\

      \noindent\textbf{Step 2:} The intrinsic diameter of the set $M_t\cap\{h(x,t)=a\}$ is bounded by $6\eta^{-1}a$ for $a\geq 1$.

      It suffices to consider $t=0$. Since $M$ is convex, the level set $\{h(x,t)=a\}$ is also convex. By (\ref{m contained in cone}), $M \cap\{h(x,0)=a\}$ is contained in a 3 dimensional ball of radius $\eta^{-1}a$, thus has extrinsic diameter at most $2\eta^{-1}a$. By Appendix \ref{intrisic diam extrinsic diam appendix}, the intrinsic diameter of a convex set is bounded by {the}  triple of the extrinsic diameter, the assertion then follows immediately. \newline

      \noindent\textbf{Step 3:} For each $j$, there is a single normalized rotation vector field $K^{(j)}$ satisfying $| \left<K^{(j)},\nu\right>| H\leq C2^{-\frac{j}{2}}\varepsilon_1$ on $\partial^{1} \Omega_j$

      We fix a point $(\bar{x},\bar{t})$ in $\partial^1\Omega_j$. For any other $(x,t)\in\partial^1\Omega_j$, it's possible to find a sequence of points $(x_k,t_k)$, $k=0,1,...,N$ such that
      \begin{enumerate}
        \item $(x_0,t_0)=(\bar{x},\bar{t}), \ (x_N,t_N)=(x,t)$
        \item $N\leq C2^{\frac{j}{25}}$
        \item $(x_{k+1},t_{k+1})\in\hat{\mathcal{P}}(x_{k},t_{k},\frac{1}{10},\frac{1}{100})$
      \end{enumerate}

      In fact, since the mean curvature is bounded by 1,  we can use at most $C\cdot 2^{\frac{j}{25}}$ consecutive points to reach the time $t$.

      Then keep $t$ fixed, by Step 2 we can use another $C\eta^{-1}2^{\frac{j}{100}}\Lambda$ consecutive points to reach $(x,t)$.

      By Step 1, for each $(x_k,t_k)$ there is a normalized rotation vector field $K^{(x_k,t_k)}$ such that $| \left<K^{(x_k,t_k)}\right>| H\leq 2^{-j}\varepsilon_1$ and $| K^{(x_k,t_k)}| H\leq 5$ in $\hat{\mathcal{P}}(x_k,t_k, 100,100^2)$. The structure of Bowl$\times\mathbb{R}$ and $S^1\times\mathbb{R}^2$ together with Lemma \ref{canonical nbhd lemma} gives that

      $B_{g_{t_{k+1}}}(x_{k+1},10H(x_{k+1},t_{k+1})^{-1})\times
      \{t_{k+1}\}\subset\hat{\mathcal{P}}(x_k,t_k,100,100^2)$.

      Applying Lemma \ref{NRVFDistanceCyl} or Lemma \ref{NRVFDistanceBowl} to $K^{(x_k,t_k)}$ and $K^{(x_{k+1},t_{k+1})}$:
      \begin{align*}
      \min\left\{
     \begin{array}{c}
       \max\limits_{B_{L'H(x_{k+1},t_{k+1})^{-1}}(x_{k+1})}| K^{(x_k,t_k)}-K^{(x_{k+1},t_{k+1})}| H \\
       \max\limits_{B_{L'H(x_{k+1},t_{k+1})^{-1}}(x_{k+1})}| K^{(x_k,t_k)}+K^{(x_{k+1},t_{k+1})}| H
     \end{array}
     \right\}\leq C(L'+1)2^{-j}\varepsilon_1
     \end{align*}

      Without loss of generality we can assume that the first quantity always achieves the minimum. Moreover, we can choose $L'=2\cdot2^{\frac{j}{20}}\Lambda$ so that $B_{L'H(x_{k+1},t_{k+1})^{-1}}(x_{k+1})$ contains each time slice of $\Omega_j$. Then we can sum up the inequalities to obtain:
      \begin{align*}
     \min\left\{
       \max\limits_{\Omega_j}| K^{(\bar{x},\bar{t})}-K^{(x,t)}| H, \ \max\limits_{\Omega_j}| K^{(\bar{x},\bar{t})}+K^{(x,t)}| H
     \right\}
       \leq  CL'N2^{-j}\varepsilon_1\leq C2^{-\frac{j}{2}}\varepsilon_1
     \end{align*}

      Let $K^{(j)}=K^{(\bar{x},\bar{t})}$. Then $| \left<K^{(j)},\nu\right>| H\leq C2^{-\frac{j}{2}}\varepsilon_1$ in $\Omega_j$. \newline

      \noindent\textbf{Step 4:} Finishing the proof.

      Define the function $f^{(j)}$ on $\Omega_j$ to be
      \begin{align}\label{f in the final step}
        f^{(j)}(x,t)=\exp ({\lambda_j t})\frac{\left<K^{(j)},\nu\right>}{H-c_j}
      \end{align}
      where $\lambda_j=2^{-\frac{j}{50}}, c_j=2^{-\frac{j}{100}}$.

      As in \cite{brendle2019uniqueness} or (\ref{EQNforf}), we have
      \begin{align*}
        (\partial_t -\Delta )f^{(j)}
       =&\left(\lambda_j-\frac{c_j| A| ^2}{H-c_j} \right)f^{(j)}+2\left<\nabla f^{(j)},\frac{\nabla H}{H-c_j}\right>
      \end{align*}

      Since $H\geq 10^{3}h^{-1}$ whenever $h>\Lambda$, we have $H>2\cdot 2^{-\frac{j}{100}}=2c_j$ in $\Omega_j$ for all large $j$.

      Hence
      \begin{align*}
        \lambda_j-\frac{c_j| A| ^2}{H-c_j}\leq \lambda_j-\frac{c_jH^2}{3(H-H/2)}
        \leq \lambda_j-\frac{4}{3}c_j^2<0
      \end{align*}

      By the maximum principle,
      \begin{align*}
        \sup\limits_{\Omega_j}| f^{(j)}| \leq & \sup\limits_{\partial\Omega_j}| f^{(j)}| =
        \max\left\{\sup\limits_{\partial^1\Omega_j}| f^{(j)}| ,\sup\limits_{\partial^2\Omega_j}| f^{(j)}| \right\}
      \end{align*}

      Now by Step 3,
      \begin{align*}
        \sup\limits_{\partial^1\Omega_j}| f^{(j)}| \leq \frac{| \left<K^{(j)},\nu\right>H| }{(H-c_j)H}\leq C\frac{2^{-\frac{j}{2}}\varepsilon_1}{2c_j^2}\leq C2^{-\frac{j}{4}}\varepsilon_1
      \end{align*}

      On the other side,  $| \left<K^{(j)},\nu\right>| \leq C2^{\frac{j}{50}}$ in $\Omega_j$. Therefore, for large $j$:
      \begin{align*}
        \sup\limits_{\partial^2\Omega_j}| f^{(j)}| \leq \exp(-2^{\frac{j}{50}})\frac{| \left<K^{(j)},\nu\right>| }{c_j}\leq 2^{-j}
      \end{align*}

      Putting them together we get $| f^{(j)}| \leq 2^{-j/4}$ in $\Omega_j$. This implies that the axis of $K^{(j)}$ (i.e the 0 set of $K^{(j)}$) has a uniform bounded distance from $p$ or equivalently  $| K^{(j)}(0)| \leq C$.
      (If this is not the case, then passing to subsequence we may assume that $| K^{(j)}(0)| \rightarrow \infty$, also $\tilde{K}^{(j)}=\frac{K^{(j)}}{| K^{(j)}(0)| }$ converges locally to a constant vector field $\tilde{K}$ which is tangential to $M_0$ near $0$, hence $M_0$ must be flat, a contradiction.)

      Passing to a subsequence and taking limit, there exists normalized rotation vector field $K$ which is tangential to $M$. This completes the proof.
\end{proof}

\begin{appendix}

\section{ODE for Bowl soliton}\label{ode of bowl}
    We will set the origin to be the tip of the Bowl soliton and the $x_{n+1}$ to be the translating axis.
    Use the parametrization: $\varphi:\mathbb{R}^n\rightarrow\text{Bowl}^n$:
    $\varphi(x)=(x,h(x))$ where $h(x)=\varphi(| x| )$ is a one variable function satisfying the ODE:
    \begin{align*}
      \frac{\varphi''}{1+\varphi'^2}+\frac{(n-1)\varphi'}{r}=1
    \end{align*}
    with initial condition $\varphi(0)=\varphi'(0)=0$.

    Since $\varphi''\geq0$, we have the inequality:
    \begin{align*}
      \varphi''+\frac{(n-1)\varphi'}{r}\geq 1
    \end{align*}
    Using the integrating factor, when $r>0$:
    \begin{align*}
      (r^{n-1}\varphi')'=r^{n-1}\left(\varphi''+\frac{(n-1)\varphi'}{r}\right)\geq r^{n-1}
    \end{align*}
    Integrating from $0$ to $r$ we obtain $\varphi'\geq \frac{r}{n}$ and $\varphi \geq \frac{r^2}{2n}$

    The mean curvature at $(x,h(x))$ is given by
    \begin{align*}
      \frac{1}{\sqrt{1+\varphi'(| x| )^2}}\leq \frac{1}{\sqrt{1+\frac{| x| ^2}{n^2}}}<\frac{n}{| x| }
    \end{align*}

    As an application, if $n=2$ and let $J$ be the antisymmetric matrix whose only nonzero entries are $J_{12}=-J_{21}=1$. The $| J| H<2$ on the Bowl soliton.
\section{Heat Kernel estimate}\label{appendix heat kernel}
    The purpose is to justify (\ref{hkest2}). Let's use a more general notation, that is, we replace $\frac{L_0}{4}$ by $L$. Denote
       \begin{align*}
         &D(x,y,k_1,k_2,\delta_1,\delta_2)=
       \left\lvert (x_1,x_2)-(\delta_1 y_1, \delta_2 y_2)-(1-\delta_1,1-\delta_2)L+(4k_1,4k_2)L\right\rvert
       \end{align*}

        Then
        \begin{align*}
          K_t&(x,y)=-\frac{1}{4\pi t}\sum_{\delta_i\in \{\pm 1\}, k_i\in \mathbb{Z}}(-1)^{-(\delta_1+\delta_2)/2}\cdot e^{-\frac{D^2}{4t}}
        \end{align*}

       We observe that, when $x\in\Omega_{L/25}$ and $y\in\partial\Omega_{L}$,
       \[| D| \geq \frac{| k_1| +| k_2| +1}{2}L\].

       Then
       \[| \partial_{\nu_{y}}e^{-D^2/4t}| \leq \frac{| D| }{2t}
       e^{-D^2/4t}\leq C\frac{(| k_1| +| k_2| +1)L}{t}\exp\left(-\frac{(| k_1| +| k_2| +1)^2L^2}{16t}\right)\]
       The last inequality holds because the function $\lambda e^{-\lambda}$ is decreasing for $\lambda>1$.
       Consequently,
       \begin{align*}
         | \partial_{\nu_y} K_t(x,y)| &\leq \sum_{n=0}^{\infty}\sum_{| k_1| +| k_2| =n, \delta_i\in\{\pm1\}}\frac{1}{4t}| \partial_{\nu_{y}}e^{-\frac{D^2}{4t}}| \\
         &\leq C\sum_{n=0}^{\infty}\sum_{| k_1| +| k_2| =n} \frac{(| k_1| +| k_2| +1)L}{t^2}\exp\left(-\frac{(| k_1| +| k_2| +1)^2L^2}{16t}\right)\\
          & \leq C\sum_{n=1}^{\infty}\frac{n^2L}{t^2}\exp\left(\frac{-n^2L^2}{50t}\right)\exp\left(\frac{-n^2L^2}{50t}\right)\\
         &\leq C\sum_{n=1}^{\infty}\frac{n^2L}{t^2}\frac{2(50t)^2}{(n^2L^2)^2}e^{\frac{-L^2}{50t}} \leq \frac{C}{L^3}e^{\frac{-L^2}{50t}}
       \end{align*}
       where the last inequality used the fact that $e^{-s}< \frac{2}{s^2}$ when $s>0$. Integrating along the boundary and replacing $t$ by $t-\tau$ we get \begin{align*}
         \int_{\partial{\Omega_{L}}}| \partial_{\nu_y}K_{t-\tau}(x,y)| dy\leq \frac{C}{L^2}e^{\frac{-L^2}{50t}}\leq \frac{CL^2}{(t-\tau)^2}e^{-\frac{L^2}{50(t-\tau)}}
       \end{align*}
       The last inequality is because $t-\tau<L^2$.
       Putting $L_0/4$ in place of $L$ and then we get (\ref{hkest2}).

       \section{Intrinsic and extrinsic diameter of a convex hypersurface}\label{intrisic diam extrinsic diam appendix}

       Give a compact, convex set $K\subset\mathbb{R}^n$, the boundary $M=\partial K$. The intrinsic diameter of $M$ is
       \begin{align*}
         d_1(M)=\sup\limits_{x,y\in M}\inf\limits_{\substack{\gamma(0)=x, \gamma(1)=y\\ \gamma \text{ continous }}} L(\gamma)
       \end{align*}
       The extrinsic diameter of $M$ is
       \begin{align*}
         d_2(M)=\sup\limits_{x,y\in M}| x-y|
       \end{align*}
       We show that $d_1(M)\leq 3d_2(M)$.

       Given a two dimensional plane $P$ whose intersection with $M$ contains at least two points. $P\cap K$ is convex.  Let's restrict our attention to $P$.
       Take $x,y\in P\cap M$ that attains $d_2(P\cap M)$.  Without loss of generality we may assume that $P=\mathbb{R}^2$, $x=(-1,0), y=(1,0)$. Thus $d_2(P\cap M)=2$.

       $P\cap K$ is contained in the rectangle
       $R=\{(x,y)\in\mathbb{R}^2\ |  \ | x| \leq 1, | y| \leq 2\}$ by the choice of $x,y$.
       Let $R^{\pm}$ be the upper/lower half of this rectangle, respectively.
       By convexity we see that $M\cap P\cap R^{+}$ is a graph of a concave function on $[-1,1]$, thus the length is bounded by half of the perimeter of $P$, which is $6$. In the same way $M\cap P\cap R^{-}$ has length at most $6$.

       Now let $x',y'$ attains $d_1(M)$. Then we find some $P$ passing through $x',y'$. The above argument shows that there exists a curve $\gamma$ connecting $x',y'$ with $L(\gamma)\leq 3d_2(M\cap P)$.
       Then we have:
       \begin{align*}
         d_1(M)\leq L(\gamma)\leq 3 d_2(M\cap P)\leq 3d_2(M)
       \end{align*}

       \section{Cross section of $S^1\times\mathbb{R}^2$}
       Let $\omega_3$ be the unit vector in the $x_3$ axis and $h(x)=\left<x,\omega_3\right>$ be the height function. Suppose that $A\in SO(4)$, $R\gg 1$ and $\eta$ sufficiently small. Also suppose that $\left<\nu, \omega_3\right> > -C\eta$ on $\Sigma_j\cap B_R(0)$ where $\Sigma_j=A_j(S^1\times\mathbb{R}^2)$. Then the cross section $\Sigma_j\cap\{h=0\}\cap B_R(0)$ is $C\eta$ close to a rotated $(S^1\times\mathbb{R})\cap B_R(0)$.

       \begin{proof}
         Since $R$ is large, all the possible normal vectors of $(S^1\times\mathbb{R})\cap B_R(0)$ should be in the form $(\cos\theta, \sin\theta, 0,0)$ for $\theta\in\mathbb{R}$. So we have $(\cos\theta, \sin\theta, 0,0)\cdot A\cdot(0,0,1,0)^{T}\geq -C\eta$. Replacing $\theta$ by $\theta+\pi$ we get $| (\cos\theta, \sin\theta, 0,0)\cdot A^{T}\cdot(0,0,1,0)^{T}| \leq C\eta$ for all $\theta$. That means $A_{13}=O(\eta)$ and $A_{23}=O(\eta)$.

         Let $I=\begin{bmatrix}
                  1 & 0 & 0 & 0 \\
                  0 & 1 & 0 & 0 \\
                  0 & 0 & 0 & 0 \\
                  0 & 0 & 0 & 0
                \end{bmatrix}$

         We can express $\Sigma_j$ as the solution of the equation $X AI A^{T}X^{T}=0$ where $X=(x_1,x_2,x_3,x_4)$.

         Denote $I_1=AI A^{T}$.  Let $\bar{X}=(x_1,x_2,x_4)$ be the projection of $X$ onto the plane $\{h=0\}$ and $\bar{I_1}$ be the $3\times 3$ matrix obtained by deleting the 3rd row and 3rd column of $I_1$.

         Then $\Sigma_j\cap\{h=0\}$ is the solution of $\bar{X}\bar{I_1}\bar{X}^{T}=0$.

         The fact that $A_{13}=O(\eta)$ and $A_{13}=O(\eta)$ implies that $(I_1)_{13}=(I_1)_{31}, (I_1)_{23}=(I_1)_{32}, (I_1)_{43}=(I_1)_{34}$ are all $O(\eta)$ small  and $(I_1)_{33}$ is $O(\eta^2)$ small. Since $I_1$ is rank 2 with two eigenvalues equal to $1$,  $\bar{I_1}$ also has rank 2 with two eigenvalues both in the form of $1+O(\eta)$.
         This means $\Sigma_j\cap\{h=0\}=\mathbb{R}\times E_j$ where $E_j$ is an ellipse with the long and short axis both in the form $1+O(\eta)$. Then the result follows immediately.
       \end{proof}

\end{appendix}




\end{document}